
\documentclass[journal]{IEEEtran}
%
% If IEEEtran.cls has not been installed into the LaTeX system files,
% manually specify the path to it like:
% \documentclass[journal]{../sty/IEEEtran}

\usepackage{ifthen}
\newboolean{showcomments}
\setboolean{showcomments}{true}
\usepackage[usenames,dvipsnames]{color}

\definecolor{bleudefrance}{rgb}{0.19, 0.55, 0.91}
\definecolor{ao(english)}{rgb}{0.0, 0.5, 0.0}

%ADD CITES
\newcommand{\addcite}[0]{\ifthenelse{\boolean{showcomments}}
{\textcolor{purple}{(add cite(s)) }}{}}%

% Comments
\newcommand{\enrique}[1]{  \ifthenelse{\boolean{showcomments}}
% {\todo[inline,color=bleudefrance]{Enrique: #1}}{}}
{{\color{blue} Enrique: #1}}{}}
\newcommand{\emmargin}[1]{\ifthenelse{\boolean{showcomments}}{\marginpar{\color{bleudefrance}\tiny EM: #1}}{}}
\newcommand{\dennice}[1]{\ifthenelse{\boolean{showcomments}}
{\textcolor{blue}{Dennice says: #1}}{}}

%Edits to be highlighted
\newboolean{showedits}
\setboolean{showedits}{false}
\usepackage[markup=underlined]{changes}
\definechangesauthor[color=bleudefrance]{EM}
\newcommand{\aem}[1]{
\ifthenelse{\boolean{showedits}}
{\added[id=EM]{#1}}
{\!#1\hspace{-4.75pt}}
}
\newcommand{\repem}[2]{
\ifthenelse{\boolean{showedits}}
{\replaced[id=EM]{#1}{#2}}
{\!#1\hspace{-4.75pt}}
}
\newcommand{\dem}[1]{
\ifthenelse{\boolean{showedits}}
{\deleted[id=EM]{#1}}
{}
}
%\listoftodos[{\color{red}\textbf{List of comments/suggestions}}]
\newcommand{\hl}[1]{\ifthenelse{\boolean{showcomments}}
{\textcolor{purple}{#1}}{}}%

\usepackage{epsfig, latexsym, times, bbm}
\usepackage{color}
\usepackage{colortbl}
\usepackage{array}
\usepackage{cite}
\usepackage{amsmath}
\usepackage{graphics}
\usepackage{graphicx}
\usepackage{amssymb}
\usepackage{url}
\usepackage{comment}
\usepackage{multirow}

%\captionsetup{justification   = raggedright, singlelinecheck = false}
\usepackage{bbding}
\usepackage{amsfonts}
\usepackage{cases}
\usepackage{subfigure}
\usepackage{algorithmic}
\usepackage{mathrsfs}
\usepackage{bm}
\usepackage{verbatim}
\usepackage{enumerate}
%\usepackage[colorlinks,
%            linkcolor=black, % ref, red
%            anchorcolor=black, % bookmark, blue
%            citecolor=black % cite, green
%            ]{hyperref}
\usepackage{booktabs}
\usepackage[linesnumbered,ruled,longend]{algorithm2e}
\usepackage{textcomp}
%% mac unsupported
%\usepackage{slashbox}
\usepackage{subeqnarray}
\usepackage{amsthm}
%
%
%
%
%
%
%
%
%%% added
\def\ba{\begin{array}}
	\def\ea{\end{array}}
\newcommand{\beq}{\begin{equation}}
	\newcommand{\eeq}{\end{equation}}
\newcommand{\bq}{\begin{eqnarray}}
	\newcommand{\eq}{\end{eqnarray}}
\newcommand{\bqn}{\begin{eqnarray*}}
	\newcommand{\eqn}{\end{eqnarray*}}
\newcommand{\bee}{\begin{enumerate}}
	\newcommand{\eee}{\end{enumerate}}
\newcommand{\bi}{\begin{itemize}}
	\newcommand{\ei}{\end{itemize}}
\newcommand{\bseq}{\begin{subequations}}
\newcommand{\eseq}{\end{subequations}}

\newcommand{\diag}{\textrm{diag}}
\newcommand{\blockdiag}{\textrm{blockdiag}}
\newtheorem{definition}{\textbf{Definition}}
\newtheorem{proposition}{\textbf{Proposition}}
\newtheorem{corollary}{\textbf{Corollary}}
\newtheorem{lemma}{\textbf{Lemma}}
\newtheorem{theorem}{\textbf{Theorem}}
\newtheorem{remark}{\textbf{Remark}}
\newtheorem{assumption}{\textbf{Assumption}}
%
%
%
%\newcommand{\PreserveBackslash}[1]{\let\temp=\\#1\let\\=\temp}
%\newcolumntype{C}[1]{>{\PreserveBackslash\centering}p{#1}}
%\newcolumntype{R}[1]{>{\PreserveBackslash\raggedleft}p{#1}}
%\newcolumntype{L}[1]{>{\PreserveBackslash\raggedright}p{#1}}
%\newcommand{\cell}[3]{\parbox[c][#2][c]{#1}{\makebox[#1]{#3}}}
%
%\renewcommand\thepage{}
%\renewcommand{\arraystretch}{1.3}
%
%% correct bad hyphenation here
%\hyphenation{op-tical net-works semi-conduc-tor}
%\allowdisplaybreaks

\usepackage{setspace}
\setstretch{.9543}
\ifCLASSINFOpdf
  % \usepackage[pdftex]{graphicx}
  % declare the path(s) where your graphic files are
  % \graphicspath{{../pdf/}{../jpeg/}}
  % and their extensions so you won't have to specify these with
  % every instance of \includegraphics
  % \DeclareGraphicsExtensions{.pdf,.jpeg,.png}
\else
  % or other class option (dvipsone, dvipdf, if not using dvips). graphicx
  % will default to the driver specified in the system graphics.cfg if no
  % driver is specified.
  % \usepackage[dvips]{graphicx}
  % declare the path(s) where your graphic files are
  % \graphicspath{{../eps/}}
  % and their extensions so you won't have to specify these with
  % every instance of \includegraphics
  % \DeclareGraphicsExtensions{.eps}
\fi
\hyphenation{op-tical net-works semi-conduc-tor}

%\pagenumbering{arabic}

\begin{document}

\title{On the Stability, Economic Efficiency and Incentive Compatibility of Electricity Market Dynamics}

\author{Pengcheng~You,~\IEEEmembership{Member,~IEEE,}
        Yan~Jiang,~%~\IEEEmembership{Member,~IEEE,}
        Enoch~Yeung,~\IEEEmembership{Member,~IEEE,}
        Dennice~F.~Gayme,~\IEEEmembership{Senior~Member,~IEEE,}
        and~Enrique~Mallada,~\IEEEmembership{Senior~Member,~IEEE}% <-this % stops a space
\thanks{}
\thanks{}
\thanks{}
% \thanks{}
\thanks{P. You, Y. Jiang, D. F. Gayme and E. Mallada are with the Whiting School of Engineering, Johns Hopkins University, Baltimore, MD 21218, USA (email: \{pcyou, yjiang, dennice, mallada\}@jhu.edu).}% <-this % stops a space
\thanks{E. Yeung is with the Department of Mechanical Engineering, UC Santa Barbara, Santa Barbara, CA 93106, USA (email: eyeung@engineering.ucsb.edu).}
%\thanks{J. Doe and J. Doe are with Anonymous University.}% <-this % stops a space
%\thanks{Manuscript received April 19, 2005; revised August 26, 2015.}
\thanks{A preliminary version of this work was presented in ACM e-Energy 2018~\cite{you18stabilization}.}
}

% \makeatletter
% \g@addto@macro\maketitle{\thispagestyle{plain}}
% \makeatother

\IEEEoverridecommandlockouts
\allowdisplaybreaks

% make the title area
\maketitle

%EM: To show page numbers.
\thispagestyle{plain} 
\pagestyle{plain}

\begin{abstract}
%V4
This paper focuses on the operation of an electricity market that accounts for participants that bid at a sub-minute timescale. To that end, we model the market-clearing process as a dynamical system, called \emph{market dynamics}, which is temporally coupled with the grid frequency dynamics and is thus required to guarantee system-wide stability while meeting the system operational constraints. We characterize participants as price-takers who rationally update their bids to maximize their utility in response to real-time schedules of prices and dispatch. For two common bidding mechanisms, based on quantity and price, we identify a notion of alignment between participants' behavior and planners' goals that leads to a saddle-based design of the market that guarantees convergence to a point meeting all operational constraints. We further explore cases where this alignment property does not hold and observe that misaligned participants' bidding can destabilize the closed-loop system.  We thus design a regularized version of the market dynamics that recovers all the desirable stability and steady-state performance guarantees. Numerical tests validate our results on the IEEE 39-bus system.

\end{abstract}

\begin{IEEEkeywords}
market dynamics, economic dispatch, frequency control, asymptotic stability, saddle flow dynamics.
\end{IEEEkeywords}

\IEEEpeerreviewmaketitle

\section{Introduction}

% \bi
% \item motivation for simultaneous real-time economic dispatch and frequency regulation;
% \item overlap of cross-timescale control goals: existing work and our focus - market law, bidding behavior of individual participants
% \item our methodology - dynamical model and closed-loop: existing work on market dynamics, performance etc
% \item summary of limitations of previous work and highlight our work 
% \item technical contributions
% \item structure
% \ei

\IEEEPARstart{E}{lectricity} markets aim to foster competition by allowing participants to make individual bids in the market clearing process \cite{kirschen2018fundamentals}. 
The shift of the electricity generation mix towards intermittent less-controllable renewable sources requires electricity markets to exploit resources with fast-acting capabilities.
%The makeup of electricity generation require greater flexibility, instead of ad-hooc ways that design services markets to elicit flexibility, we aim for an integrated way through a faster-timescale energy market that exploits many resources that are fast-responding. 
However, the traditional market clearing process for economic dispatch, that spans five-minute  intervals or longer, is unable to make full use of such fast resources.
% their capability, since they can operate at a much swifter pace. 
This limitation poses a challenge on the efforts to incentivize their market participation.
%The inability to exploit the full range of services/capabilities poses a challenge to fully incentivizing DERs' participation. 
%which could be beneficial for the faster timescale grid services required to account for the fast-time scale changes of renewable energy. This has disadvantages for both the DER operators who are unable to profit from the provision of these benefits and the physical grid whose stability that might be able to help maintain. 
Faster market clearing, e.g., at the sub-minute level, provides a potential solution by (a) increasing flexibility to better accommodate the increasing system variability; and (b) increasing efficiency with finer-granularity economic dispatch.
%  \dennice{this needs further clarification}.
However, such a fast-timescale market clearing process could interfere with the grid reliability, e.g., when market actions interact with the electromechanical swings of generators~\cite{alvarado2001stability}, thus raising concerns of grid operators.
Fast-timescale markets therefore have to further account for stability issues while pursuing economic efficiency.
This interplay, between physics (grid dynamics) and economics (market operation), overlaps with the existing cross-timescale frequency control architecture of grid operators~\cite{machowski1997power,wood2013power}: primary -- frequency regulation (10s of seconds), secondary -- nominal frequency restoration (1 minute), and tertiary -- economic dispatch (5+ minutes).
%\sout{A general methodology to deal with this multi-objective problem is to use} 
While there have been many recent efforts to integrate these temporally decoupled architecture, exploiting hierarchical structures and optimization decomposition across space~\cite{zhao2016unified,li2016connecting,mallada2017optimal,wang2017distributed1,wang2017distributed2,weitenberg2018robust,pang2017optimal,Dorfler_2016} and time~\cite{cai2017distributed}, such approaches follow an engineering perspective and render control laws on participants that do not necessarily reflect their individual economic incentives.

%\dennice{Again it seems like you introduced a completely unrelated problem after discussing different markets just so you can say you are taking a different view.  I can make some suggested edits (where most of the discussion in the paragraph would be eliminated and replaced with a bit of discussion of why the fast market is destablizing to lead into this next paragraph that does not require any contrary). However, based on our previous conversation it seems like this is how you want to tell the story and so my editing it will not be a good use of time. }

Instead, in this work we take into account participants' incentives and seek to incorporate the cross-timescale goals of frequency control in the market clearing process.
%\dennice{what problem, such a problem was not defined} 
In this setting, \emph{participants can bid in real time},
%\emmargin{I'm not sure I would call it freely.} 
and the market undertakes the role of ensuring economic efficiency and meeting a wide set of operational constraints (frequency regulation, power flow bounds, etc.) via pricing and dispatching.
%\dennice{how is this different than the status quo?}
In particular, we aim to design a fast-timescale sub-minute market that uses market signals as control signals and thereby operates as a controller. Thus, the market rules can be seen as a dynamical system, which is usually referred to as \emph{market dynamics}~\cite{alvarado2001stability,jin2013designing,watts2004influence,mota2001dynamic,kiani2010effect,liang2015stability}. 
%\emmargin{More references here?}
The fundamental challenge for such a market is to simultaneously account for the physical response of the power grid and the economic incentives of its participants, e.g., generators. %\dennice{This is confusing since you started by saying you want to have a market that DERs can participate in and now you are saying that generators are the market participants}
%and further fulfill frequency stability and economic dispatch in an incentive-compatible manner.
% Thus, 
We model 
%\dennice{I am not sure what you mean by the process of the dynamics} 
market dynamics with bids, based on quantity or price, as inputs, reflecting participants' preferences, and prices and dispatch as outputs. 
The market is dynamically coupled with participants (that bid according to their own preferences) and the power grid (that is constrained by Newton's and Kirchhoff's Laws). 

The notion of \emph{market dynamics} was first introduced in \cite{alvarado2001stability}, where a dynamic pricing signal reflecting a filtered version of system energy imbalance was proposed. Any excess (resp. shortage) of power supplied was viewed as depressing (resp. lifting) its value.
%\dennice{what is vice versa here?}
In that setting, generators and loads respond to this signal by adjusting generation and consumption, which in turn changes the energy imbalance and thus affects the price. This work pioneered the study of the grid-market-participant
interplay, yet it did not provide an explicit economic interpretation of this price signal and the corresponding participants' response.   %This coupled market/power dynamical system is linearized and further analyzed numerically in terms of stability by computing the corresponding eigenvalues.
Since then, follow-up work
%~\cite{jin2013designing,watts2004influence,mota2001dynamic,kiani2010effect,liang2015stability}, 
has aimed at accommodating more physical constraints 
%\dennice{this seems vague as I am not sure what capabilities are in this context}
\cite{jin2013designing,watts2004influence,mota2001dynamic}, including congestion management \cite{mota2001dynamic}, as well as providing control theoretical guarantees, including delays \cite{kiani2010effect} and discrete updates \cite{liang2015stability}.
However, they are still predicated on the similar ad-hoc designs that lack economic guarantees of efficiency and incentive alignment.
%\dennice{This is a weird citation because it looks like the idea is that these papers use ad-hoc design (control guarantee, but not economic guarantee on efficiecny, incentive alignment; comment ad-hoc in the end; lacks xx )}
%\dennice{I think there may be too many ideas in this last sentence and it was not really clear to me how these things extended things based on what is written here.  I can try to look at these references later}
%e.g., including extra PID tuning to reduce frequency deviation \cite{jin2013designing}, considering communication delay \cite{kiani2010effect}, adding congestion constraints \cite{mota2001dynamic}, taking into account advanced market participants with market power \cite{watts2004influence}, and generalizing to the discrete-time dynamical model \cite{liang2015stability}, etc.

Recently, designs for dynamic price signals, based on Lagrange dual gradient algorithms of an economic dispatch problem, have been proposed \cite{jokic2009real,stegink2017unifying,stegink2017frequency,stegink2018hybrid}. These pricing schemes systematically embody a diverse range of operational constraints, and lead to a principled design with a direct economic interpretation of equilibrium prices as well as stability guarantees. %\dennice{so these are price signals associated with a market dynamics problem?}
This Lagrangian-based approach further allows market participation based on both quantity~\cite{jokic2009real,stegink2017unifying} and price bids~\cite{stegink2017frequency,stegink2018hybrid}. 
However, such techniques only allow for a limited homogeneous set of individual behavioral laws -- mapping market outcomes to individual bid updates -- that are analyzed on a case by case basis without systematic guarantee of incentive compatibility.

\subsubsection*{Contributions of this work}

This work builds upon recent saddle-based distributed optimization study to develop a general framework for the design and analysis of market dynamics that account for a wide range of participants' (rational) bidding behavior, market efficiency goals and network operational constraints, while preserving system-wide stability and ensuring incentive compatibility.
More precisely, we consider a setting in which participants receive price and dispatch information from the current market outcome, and seek to maximize their individual utility by updating their bids via a version of dynamic gradient play or best response. 

We identify a particular notion of \emph{alignment}, between participants' bidding behavior and the grid planner's goals, that leads to a systematic design of market dynamics. Such a design is guaranteed to drive the closed-loop system to an equilibrium that (a) is economically efficient and satisfies all operational constraints required by the grid planner; (b) is incentive compatible with all individual participants.
Our alignment condition may be satisfied even when different participants choose different update strategies. 
%\dennice{Is this notion of closed loop system well defined?}

%\dennice{Lets discuss, I think you meant the condition not the framework here, right?}
We show that this alignment condition provides a rational explanation for observations of participants' behavior in previous literature, in both quantity \cite{alvarado2001stability,jokic2009real,stegink2017unifying} and price \cite{stegink2017frequency} bidding settings. 
More specifically, we find that our alignment condition is implicitly satisfied in all of these cases. These results suggest that this framework can provide deeper understanding of previously proposed methods.

%\enrique{Please check that the above claims are correct or change cites accordingly.}

Finally, we investigate an exemplar of rational yet misaligned price bidding strategy, and show that the absence of this alignment property can 
%\dennice{Seems a bit vague?} 
lead to unstable behavior. 
We thus propose a more robust version of the proposed market dynamics, based on regularization, that 
%\dennice{can or does?} 
recovers asymptotic convergence and desirable steady-state performance of the closed loop. Our solution can be interpreted as an implicit regularization that aims to penalize the system misalignment, thus driving the system closer to alignment. %\dennice{I need to better understand why this condition is a regime}
We illustrate our results with numerical simulations on the IEEE 39-bus system.

The remainder of the paper is organized as follows. 
Section~\ref{sec:setup} formalizes the grid planner's goals and sets up the general framework. Section~\ref{sec:aligned_MD} defines the notion of alignment and characterizes the resulting systematic design of market dynamics, with applications to the quantity and price bidding settings.
Section~\ref{sec:misaligned_MD} introduces misaligned bidding behavior and highlights the required market modification to restore (approximate) alignment.
Section~\ref{sec:simulation} presents simulation results that validate the theory.  Section~\ref{sec:conclusion} provides conclusions. 
% concludes the paper.

%\dennice{I am not a fan of notation in the introduction}

\subsubsection*{Notation}
% \emph{Notations.} 
Let $\mathbb{R}$, $\mathbb{R}_{\ge0}$ and $\mathbb{N}$ be the sets of real, nonnegative real and natural numbers, respectively. For a finite set $\mathcal{H} \subset \mathbb{N}$, its cardinality is denoted as $|\mathcal{H}|$. For a set of scalar variables $\{y_j, j\in\mathcal{H}\}$, its column vector is denoted as $y_{\mathcal{H}}$. The subscript $\mathcal{H}$ might be dropped if the set is clear from the context.
Given vectors $y\in\mathbb{R}^{|\mathcal{H}|}$ and $u\in\mathbb{R}^{|\mathcal{H}|}$, $y\le u$ implies $y_j\le u_j$, $\forall j\in\mathcal{H}$.
We define an element-wise projection $[y]^+_{u}$ where
\beq\label{eq:def_proj}
[y_j]^+_{u_j}=\left\{
\begin{aligned}
	y_j\ ,& \quad \textrm{if}~y_j > 0~\textrm{or}~u_j > 0 \, , \\
	0\ ,&\quad \textrm{otherwise} .
\end{aligned}
\right.
\eeq
This projection is non-expansive in the sense that for any $u^* \ge 0$, the following holds:
\beq\label{eq:nonexpansive}
{[y]_u^+}^T (u-u^*)\le y^T(u-u^*) \ ,
\eeq
since the element-wise projection is active ($[y_j]^+_{u_j}=0$) only when $y_j \le 0$ and $u_j \le 0$, which still implies $[y_j]_{u_j}^+ (u_j - u_j^*) = 0 \le y_j (u_j - u_j^*)$.

For an arbitrary matrix $Y$, $Y^T$ denotes its transpose. If $Y$ is symmetric ($Y=Y^T$), we use $Y \succeq 0$, $Y \succ 0$, $Y \preceq 0$ and $Y \prec 0$ to denote that $Y$ is positive semidefinite, positive definite, negative semidefinite and negative definite, respectively. $Y^{-1}$ is the inverse of $Y$.
%$Y_\mathcal{H}$ be sub-matrix of $Y$ composed of all the rows $Y_j$, $j\in\mathcal{H}$. 
$\mathbf{1}$ is a column vector of all 1's.
%\dennice{what is proper dimension?}.
In an abuse of notation we employ $0$ to denote a column vector, a row vector or a matrix of all 0's if its dimension is clear from the context.
Given a column vector of variables $y\in\mathbb{R}^{|\mathcal{H}|}$, we use two corresponding diagonal matrices $\Gamma^{y}, T^{y} \in \mathbb{R}^{|\mathcal{H}|\times|\mathcal{H}|}$ to denote rate of change and time constants, respectively, with $\Gamma^{y} = {T^{y}}^{-1}$.
Their $j^{\mathrm{th}}$ diagonal elements are denoted as $\gamma_j^y$ and $\tau^y_j$, respectively.

% Literature review: 
% \begin{enumerate}
%     \item frequency regulation from engineering point of view
%     \item notion and early work of market dynamics for frequency regulation: Preliminary work \cite{you18stabilization}
%     \item gap of market economic dispatch and frequency regulation
% \end{enumerate}
% Any other related literature, e.g., from Jacob stroustrup, Daniel Kirschen,
% MIT professor: http://meche.mit.edu/people/faculty/aanna\%40mit.edu

\section{Problem Formulation and Setup}\label{sec:setup}

%\dennice{A lead in here would be good as it is not clear why we start by defining these things, In fact to me this is abrupt and does not really fit unless this is part of the notation section that accidentally was separated}

We consider a continuous-time model for the interaction among grid dynamics, participants' bidding behavior, and the electricity market clearing process, i.e., \emph{market dynamics}. 
We focus on the performance of this coupled system, in terms of steady-state economic dispatch and incentive alignment, from the market perspective, and frequency stability, from the control perspective. 
% We will analyze participants' bidding behavior under two common bidding mechanisms, based on quantity and price, and systematically develop market control laws (through pricing and dispatch). 
For simplicity, we restrict active market participants to controllable generators and assume that loads are inelastic, though the framework can be extended to incorporate elastic consumers and prosumers.

In the next subsections, we start with a planner's problem formulation that embodies all the target cross-timescale goals.
We also set up the real-time interactive structure of the coupled system, followed by our models for power network dynamics and rational participants' bidding.
%and present a linearized power network model that our market dynamics designs are predicated upon.
%Finally we describe our general rules for developing market control laws as well as the individual bidding behaviors that our framework incorporates. 
%\enrique{I modified the last sentence. It suggested that we derive bidding behavior, but before we suggest that such behavior is "free". We need to be consistent.}
%\dennice{there are a couple of sections after that, so this is strange to have in the end or finally, all of the sections should be summarized here}

% \enrique{
% Ok, It seems to me that the above part needs to be part of a subsection. My thought at this point is to have the following subsections:
% \begin{itemize}
%     \item Network Model
%     \item Market Participants (this would be the current interaction structure plus the above).
%     \item Planner 
% \end{itemize}

% I'm also thinking in having a slightly more detailed yet general description of the role of the participants. For example, 
% each participant has a utility function $u(p,\alpha;q,\pi)$ that depends on the specific interaction model/market structure, and seeks to solve the following general problem
% \begin{equation}
%      \max_{p}\min_{\alpha} u(p,\alpha;q,\pi)
% \end{equation}
% To achieve this, it for each tentative decision $p$ or $\alpha$ it can choose between
% \begin{align}
% \dot p &= \nabla_p u(p,\alpha;q,\pi),\\
% p&\in\arg\max u(p,\alpha;q,\pi)
% \end{align}
% and similarly
% \begin{align}
% \dot \alpha &= -\nabla_\alpha u(p,\alpha;q,\pi),\\
% \alpha&\in\arg\min u(p,\alpha;q,\pi)
% \end{align}
% }

\subsection{Planner's Problem}

We adopt the viewpoint of a grid planner to formalize the cross-timescale economic and frequency control goals in a single problem.
Consider a power network with a connected directed graph $(\mathcal{N},\mathcal{E})$, where $\mathcal{N}:=\{1,2,\dots,|\mathcal{N}|\}$ is the set of nodes and $\mathcal{E}\subset\mathcal{N}\times\mathcal{N}$ is the set of edges connecting nodes. Each node is usually a bus, while each edge describes a connection between two buses, e.g., a transmission line. 
Without loss of generality, we assume there is only one (aggregate) controllable generator at each bus.
We use $(j,k)$ to denote the line from bus $j$ to bus $k$. An arbitrary orientation is applied such that any $(j,k)\in\mathcal{E}$ implies $(k,j)\notin\mathcal{E}$. Each line $(j,k)\in\mathcal{E}$ is endowed with an impedance $z_{jk}$. 
We further define an incidence matrix $C \in \mathbb{R}^{|\mathcal{N}|\times|\mathcal{E}|}$ for the network graph with its element $C_{j,e}=1$ if $e=(j,k)\in\mathcal{E}$, $C_{j,e}=-1$ if $e=(k,j)\in\mathcal{E}$ and $C_{j,e}=0$ otherwise.

We first formalize the primary and secondary control goals. Given a demand vector $d\!:=\!(d_j, j\!\in\!\mathcal{N})\!\in\!\mathbb{R}^{|\mathcal{N}|}$, we adopt a linearized dynamical model for the power network in transient:
\bseq\label{eq:swingdy_initial}%
\begin{align}
\label{eq:swingdy_initial:1}
\dot \theta 	& =   \omega \\
\label{eq:swingdy_initial:2}
M \dot \omega & =  q -d   -D   \omega   - CB  C^T \theta 
\end{align}
\eseq
where $\theta\!:=\!(\theta_j, j\!\in\!\mathcal{N})\!\in\!\mathbb{R}^{|\mathcal{N}|}$ denotes bus phase angles, $\omega\!:=\!(\omega_j, j\!\in\!\mathcal{N})\!\in\!\mathbb{R}^{|\mathcal{N}|}$ denotes bus frequencies, and
$q\!:=\!(q_j, j\!\in\!\mathcal{N})\!\in\!\mathbb{R}^{|\mathcal{N}|}$ denotes generation dispatch.
Here $M:=\diag(M_j,j\in\mathcal{N})\in\mathbb{R}^{|\mathcal{N}|\times|\mathcal{N}|}$ represents the generators' inertia, and $D:=\diag(D_j,j\in\mathcal{N})\in\mathbb{R}^{|\mathcal{N}|\times|\mathcal{N}|}$ summarizes the generators' damping or frequency-dependent demand with $D_j>0$ in general. 
$B:=\diag(B_{jk},(j,k)\in\mathcal{E})\in\mathbb{R}^{|\mathcal{E}|\times|\mathcal{E}|}$ characterizes the sensitivity of each line flow to the phase angle difference between its two end nodes (cf. the appendix of \cite{you18stabilization} for the derivation of $B$).
Standard assumptions are made to linearize the power flow equations in \eqref{eq:swingdy_initial:2}:
1) bus voltage magnitudes are fixed constant; 2) lines are lossless; 3) reactive power is ignored \cite{kundur1994power}.
Note that the linearized network model \eqref{eq:swingdy_initial} implicitly assumes that the variables $\theta,\omega,q$ as well as the parameter $d$ are \emph{deviations} from their nominal values.
Therefore, $\omega =0$ represents the nominal frequency (\emph{the goal of secondary control}), while $\dot \omega =0$ implies stabilized frequencies (\emph{the goal of primary control}).
%Here $\theta,\omega,q$ are the variables, $M,D,B,C$ are the system parameters while $d$ is \textcolor{red}{the external demand input that accounts for any disturbance}.
%\emmargin{This seems to suggest that $d$ is the control input.}

% through a change of variables. 
%In what follows we adopt this approach and in an abuse of notation we use these variables to also refer to their deviations from the nominal values.
%\dennice{Why is this due to nonlinearity? Do you mean that this is a linearized model?}
%while $q$ may also contain newly involved fast responsive generations. 

%\dennice{This needs a bridge}
It will be convenient for the analysis to define $\tilde {\theta}:=  C^T \theta \in\mathbb{R}^{|\mathcal{E}|}$ as the phase angle differences that determine (deviations of) line flows $B\tilde \theta$ across the network, and rewrite the swing dynamics \eqref{eq:swingdy_initial} in the form:
\bseq\label{eq:swingdynamics}%
\begin{align}
\label{eq:swingdynamics:1}
\dot{ \tilde \theta }	& =  C^T  \omega \\
\label{eq:swingdynamics:2}
M \dot \omega & =  q -d   -D   \omega   - CB  \tilde \theta 
\end{align}
\eseq

We then introduce a canonical tertiary control problem that seeks to find an optimal economic dispatch of generation.  
At the steady state of the power network, the stationary frequencies at the nominal value, i.e., $\omega=0$ and $\dot \omega =0$, render \eqref{eq:swingdynamics:2} a characterization of the nodal power balance over the network:
\beq\label{eq:nodal_power_balance}
 q  - d    - C B\tilde \theta =0 \ .
\eeq
We further impose the lower and upper thermal limits $\underline{F}$ and $\overline{F}$ on the (deviations of) line flows:
\beq\label{eq:line_thermal_limits}
 \underline{F} \le B \tilde  \theta \le  \overline{F}  \ .
\eeq
%are the lower and upper thermal limits on (deviations of) line flows, respectively.
%However, we do not assume a small nominal phase angle difference across each line and therefore retain the trigonometric non-linearity in the definition of $B$ \cite{you18stabilization}. 
%In this sense, the assumptions of this model are milder than those made in the standard DC power flow approximation. 
%We also assume a constant voltage value rather than a unit value.
Then the tertiary control (economic dispatch) problem that minimizes the aggregate generation cost to meet the demand over the network is given by
\begin{subequations}
	\begin{eqnarray}
	\label{eq:edp.a}
	\min_{q,  \theta} &&      \mathbf{1}^T  J(q)    \\
	\label{eq:edp.b}
	\mathrm{s.t.} &&  \eqref{eq:nodal_power_balance}, \eqref{eq:line_thermal_limits}  
	\end{eqnarray}	\label{eq:edp}%
\end{subequations}
% \enrique{An explanation for (10b) is in place.
% I'm also thinking perhaps change the above equation to the classical planners problem
% % % \begin{subequations}
% % % 	\begin{eqnarray}
% % % 	\label{eq:edp.a}
% % % 	\min_{p ,q, \tilde \theta} &&      \mathbf{1}^T  J(p)    \\
% % % 	\label{eq:edp.b}
% % % 	\mathrm{s.t.} &&  q=p \\
% % % 		\label{eq:edp.c}
% % % 	&& q  - d    - C B \tilde \theta =0\\
% % % 	\label{eq:edp.d}
% % % 	&&   \underline{F} \le B \tilde \theta \le  \overline{F}  
% % % 	\end{eqnarray}	\label{eq:edp}%
% % \end{subequations}
% }
where $J(q):=(J_j(q_j),j\in\mathcal{N})$ is a column vector-valued function, with $J_j(\cdot):\mathbb{R}\mapsto \mathbb{R}$ representing the cost function of generator $j$.\footnote{Here $J_j(\cdot)$ is defined on the deviation of generation. For convenience of the analysis, we ignore generation capacity constraints. An alternative is to incorporate them in properly redesigned cost functions \cite{mallada2017optimal}.}
%\enrique{The solution in \cite{mallada2017optimal} to account for capacity does not work for quadratic functions.} 
We assume that $J_j(\cdot)$ is \emph{strictly convex and twice differentiable}. 

% The problem \eqref{eq:edp} minimizes the aggregate generation cost, subject to nodal power balance constraints \eqref{eq:edp.b} and line thermal limits \eqref{eq:edp.c}. 
% Generators are therefore invoked to meet the power demand $d$ in an economically efficient manner. 
%We make the common assumption that EDP is feasible and finite.	

The problem \eqref{eq:edp} can be expressed compactly in terms of $q$ by first rewriting the nodal power balance \eqref{eq:nodal_power_balance} in terms of the network power balance
\beq
\mathbf{1}^T \left( q- d -CB \tilde \theta \right) = \mathbf{1}^T  \left( q- d \right) = 0 \ ,
\eeq
where the first equality follows from $\mathbf{1}^T C = 0$.
Then defining the weighted \emph{Laplacian} matrix of the power network as $L:=CBC^T$ enables \eqref{eq:nodal_power_balance} to be reorganized as
\beq
q  - d = CB\tilde \theta = CBC^T\theta  = L \theta \ .
\eeq
The line flows $B\tilde \theta$ can accordingly be expressed in terms of the bus net power injections $q-d$:
\beq
B\tilde \theta = BC^T L^{\dagger} (q-d) \ , 
\eeq
where $L^{\dagger}$ denotes the Moore-Penrose inverse of $L$. %\footnote{$L^{\dagger}$ is exactly the inverse of $L$ if it is invertible, e.g., when a slack bus is removed from the network as necessary to eliminate dependency.}
Here $BC^T L^{\dagger}$ is the \emph{power injection shift matrix} of the power network.
We further let $H^T:= [(BC^T L^{\dagger})^T, -(BC^T L^{\dagger})^T]^T \in \mathbb{R}^{2|\mathcal{E}| \times |\mathcal{N}|}$ and $F:=[\overline F^T , -\underline F^T ]^T$ be the stacked shift matrix and thermal limit vector, respectively. 
The resulting equivalent reformulation of the tertiary control problem \eqref{eq:edp} is then
% \noindent
% \textbf{Economic dispatch problem}
\begin{subequations}
	\begin{eqnarray}
		\label{eq:edp2.a}
		\min_{q} &&      \mathbf{1}^T  J(q)    \\
		\label{eq:edp2.b}
		\mathrm{s.t.} &&  
		\mathbf 1^T ( q-d )   = 0   \ : \ \lambda   \\
		\label{eq:edp2.c}
		&&	 H^T(q-d) \le {F}  \ : \ \eta \ge 0
	\end{eqnarray}	\label{eq:edp2}%
\end{subequations}
where the Kirchhoff's Laws are embedded in the matrix $H$, and $\lambda \in\mathbb{R}$,  $\eta \in\mathbb{R}_{\ge0}^{2|\mathcal{E}|}$ are the respective Lagrange dual variables for \eqref{eq:edp2.b}, \eqref{eq:edp2.c}.
The equivalence between the  formulations \eqref{eq:edp} and \eqref{eq:edp2} is formally stated below (proof by contradiction).
\begin{lemma}\label{lm:edp}
    $(q^*, \tilde \theta^*)$ is an optimal solution to \eqref{eq:edp} if and only if $\tilde \theta^* = C^T L^{\dagger} (q^*-d)$ holds and  $q^*$ is an optimal solution to \eqref{eq:edp2}.
\end{lemma}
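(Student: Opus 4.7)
The plan is to prove the equivalence by establishing that the feasible set of \eqref{eq:edp}, once projected onto the $q$-variable, coincides with the feasible set of \eqref{eq:edp2}, and that for each feasible $q$ the corresponding $\tilde\theta$ is uniquely determined by $\tilde\theta = C^T L^\dagger (q-d)$. Since the objective $\mathbf{1}^T J(q)$ depends only on $q$, this bijection between feasible sets immediately transfers optimal solutions back and forth.

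First I would show necessity: if $(q,\tilde\theta)$ satisfies the nodal power balance \eqref{eq:nodal_power_balance}, then left-multiplication by $\mathbf{1}^T$ together with $\mathbf{1}^T C = 0$ yields $\mathbf{1}^T(q-d) = 0$, which is the first constraint \eqref{eq:edp2.b}. Moreover, from $q-d = CB\tilde\theta = CBC^T\theta = L\theta$ we get $B\tilde\theta = BC^T\theta$, and I claim $\tilde\theta = C^T L^\dagger (q-d)$ is forced. Indeed, since the network graph is connected, $\mathrm{null}(L) = \mathrm{span}(\mathbf{1})$ and $\mathrm{range}(L) = \{x : \mathbf{1}^T x = 0\}$, so the general solution to $L\theta = q-d$ is $\theta = L^\dagger(q-d) + \alpha\mathbf{1}$ for some $\alpha\in\mathbb{R}$; multiplying by $C^T$ and using $C^T\mathbf{1}=0$ collapses this to the unique value $\tilde\theta = C^T L^\dagger(q-d)$. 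Substituting into the thermal limits \eqref{eq:line_thermal_limits} gives $\underline F \le BC^T L^\dagger(q-d) \le \overline F$, which, after stacking, is exactly $H^T(q-d)\le F$, i.e., \eqref{eq:edp2.c}. Thus any optimizer $(q^*,\tilde\theta^*)$ of \eqref{eq:edp} yields a feasible point $q^*$ for \eqref{eq:edp2} with $\tilde\theta^* = C^T L^\dagger(q^*-d)$ and with the same objective value.

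Conversely, given any $q$ feasible for \eqref{eq:edp2}, I would construct $\tilde\theta := C^T L^\dagger(q-d)$ and verify \eqref{eq:edp} is satisfied. Since $\mathbf{1}^T(q-d) = 0$ places $q-d \in \mathrm{range}(L)$, we have $L L^\dagger(q-d) = q-d$; setting $\theta := L^\dagger(q-d)$ gives $CBC^T\theta = L\theta = q-d$, verifying \eqref{eq:nodal_power_balance}. The thermal limits follow from unstacking $H^T(q-d)\le F$ back into $\underline F \le BC^T L^\dagger(q-d) = B\tilde\theta \le \overline F$. Thus every feasible $q$ for \eqref{eq:edp2} lifts to a feasible $(q,\tilde\theta)$ for \eqref{eq:edp} with identical objective.

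Putting the two directions together, the feasible sets and objective values coincide under the map $q \leftrightarrow (q, C^T L^\dagger(q-d))$, and the lemma follows. The only subtle point—and what the parenthetical ``proof by contradiction'' likely refers to—is ruling out the possibility that an optimizer of one problem corresponds under this map to a non-optimal point of the other; but this is immediate, since any strictly better feasible point for either problem would, by the same lifting/projection, yield a strictly better feasible point for the other, contradicting optimality. No serious obstacle arises beyond the algebraic manipulations above; the key conceptual step is recognizing that although the lift $q\mapsto\theta$ is non-unique (only determined modulo $\mathbf{1}$), the composition $q\mapsto\tilde\theta = C^T\theta$ is single-valued because $C^T\mathbf{1}=0$.
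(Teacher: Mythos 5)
Your proof is correct and follows essentially the route the paper intends: the lemma's unwritten ``proof by contradiction'' is exactly the lifting/projection argument you give (as in the paper's proof of Theorem~\ref{teo:planner}), where a strictly better feasible point for one problem is mapped to a strictly better feasible point for the other via $q \leftrightarrow (q, C^T L^{\dagger}(q-d))$. Your additional step making explicit that connectivity forces $\tilde\theta = C^T L^{\dagger}(q-d)$ at every feasible point of \eqref{eq:edp} (since $\theta$ is determined modulo $\mathbf{1}$ and $C^T\mathbf{1}=0$) is a detail the paper leaves implicit, and it is handled correctly.
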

%\noindent
%Lemma~\ref{lm:edp} can be proved by contradiction.

% The \hl{underlying}\emmargin{Do we need this qualifications here?} economic dispatch $q^*$  implies a canonical definition of clearing prices based on dual optimizers \cite{cai2017distributed}, i.e., locational marginal prices (LMPs) $\lambda^*\cdot \mathbf{1} - H\eta^*$. 
The optimal primal-dual solution $(q^*,\lambda^*,\eta^*)$ to \eqref{eq:edp2} leads to an implicit definition of clearing prices based on the dual optimizers ($\lambda^*,\eta^*$)~\cite{cai2017distributed}, given by the locational marginal prices (LMPs) $\lambda^*\cdot \mathbf{1} - H\eta^*$.
As guaranteed by 
the KKT conditions of \eqref{eq:edp2},
they are incentive compatible for individual generators in the sense that the clearing prices match the marginal generation costs, i.e., $\nabla J(q^*) = \lambda^*\cdot \mathbf{1} - H\eta^*$, where $\nabla J(q):=(\nabla J_j(q_j),j\in\mathcal{N})$ is a column vector-valued function of element-wise increasing gradients $\nabla J_j(q_j)$.

Adding now the primary and secondary control goals to the tertiary control problem leads to the following planner's problem:

\noindent
\textbf{Planner's problem}
\begin{subequations}\label{eq:planner}
	\begin{eqnarray}
		\label{eq:uedp.a}
		\min_{p,q,\omega,\tilde\theta} &&      \mathbf{1}^T  J(p)  + \frac{1}{2} \omega^T D \omega   \\
		\label{eq:uedp.b}
		\mathrm{s.t.} &&     q=p \ : \ \alpha \\
				\label{eq:uedp.c}
		&& \mathbf 1^T ( q-d )   = 0   \ : \ \lambda  \\
		\label{eq:uedp.d}
		&&	 H^T(q-d) \le {F}  \ : \ \eta\ge 0  	\\
				\label{eq:uedp.e}
		&& q-d -D\omega -CB\tilde\theta =0  \ : \ \nu 
	\end{eqnarray}	\label{eq:uedp}%
\end{subequations}
where $p:=(p_j,j\in\mathcal{N})\in\mathbb{R}^{|\mathcal{N}|}$ (used also to denote quantity bids) here represents more broadly individual output scheduling of generators, and is required to align with market dispatch through \eqref{eq:uedp.b}. %\dennice{reminder to connect this with the quantity bid that uses the same symbol}.
Note that we abuse the notation to define the Lagrange dual variables $\alpha \in\mathbb{R}^{|\mathcal{N}|}$ (used also to denote price bids), $\lambda \in\mathbb{R}$, $\eta \in\mathbb{R}_{\ge0}^{2|\mathcal{E}|}$ and $\nu \in\mathbb{R}^{|\mathcal{N}|}$ for \eqref{eq:uedp.b}-\eqref{eq:uedp.e}, respectively.

All the cross-timescale control goals are implicitly embedded in the optimum of the planner's problem. This fact is characterized in the following theorem (proof in Appendix~\ref{apx:proof_of_teo_planner}).
\begin{theorem}\label{teo:planner}
$(p^*,q^*,\omega^*,\tilde \theta^*)$ is an optimal solution to \eqref{eq:uedp}
if and only if $p^*=q^*$, $\omega^*=0$ and $\tilde \theta^* = C^T L^{\dagger} (q^*-d)$ hold and $q^*$ is an optimal solution to \eqref{eq:edp2}.
\end{theorem}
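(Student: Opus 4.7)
The plan is to reduce the planner's problem \eqref{eq:uedp} to the tertiary control problem \eqref{eq:edp2} in two moves: first eliminate $p$ via the equality constraint \eqref{eq:uedp.b}, then show that any optimum must set $\omega^*=0$, after which the remainder of the problem coincides with \eqref{eq:edp2} and Lemma \ref{lm:edp} applies to pin down $\tilde{\theta}^*$. The central algebraic fact I will rely on throughout is that $\mathbf{1}^T C = 0$ and, for a connected network with $B \succ 0$, that $\mathrm{range}(CB) = \{v \in \mathbb{R}^{|\mathcal{N}|} : \mathbf{1}^T v = 0\}$, so $\mathbf{1}^T(q-d) = 0$ is exactly the condition for existence of some $\tilde{\theta}$ solving $q - d = CB\tilde{\theta}$.

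For the forward direction, let $(p^*,q^*,\omega^*,\tilde{\theta}^*)$ be optimal. Constraint \eqref{eq:uedp.b} gives $p^* = q^*$ immediately. To show $\omega^* = 0$, I construct a competing feasible point $(q^*, q^*, 0, C^T L^{\dagger}(q^* - d))$: constraints \eqref{eq:uedp.b}--\eqref{eq:uedp.d} hold since they involve only $q^*$, and \eqref{eq:uedp.e} holds because $CBC^T L^{\dagger}(q^* - d) = L L^{\dagger}(q^* - d) = q^* - d$, using $(q^* - d) \perp \mathbf{1}$. Since $D \succ 0$, the objective at this competing point is strictly smaller than at the original point unless $\omega^* = 0$, so optimality forces $\omega^* = 0$. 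With $\omega^* = 0$ and $p^* = q^*$, the planner's problem collapses to minimizing $\mathbf{1}^T J(q)$ subject to \eqref{eq:uedp.c}, \eqref{eq:uedp.d}, together with the existence of $\tilde{\theta}$ satisfying $q - d = CB\tilde{\theta}$; the latter is redundant given \eqref{eq:uedp.c}. Hence $q^*$ solves \eqref{eq:edp2}, and Lemma \ref{lm:edp} identifies $\tilde{\theta}^* = C^T L^{\dagger}(q^* - d)$.

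The backward direction is a routine verification: given $q^*$ optimal for \eqref{eq:edp2}, the tuple $(q^*, q^*, 0, C^T L^{\dagger}(q^* - d))$ satisfies every constraint of \eqref{eq:uedp} (using $LL^{\dagger}(q^* - d) = q^* - d$ for \eqref{eq:uedp.e}), and achieves the objective value $\mathbf{1}^T J(q^*)$; by the forward argument, no feasible point of \eqref{eq:uedp} can attain a smaller value, so this tuple is optimal. The main subtlety, and the only nontrivial step, is the construction of the $\omega = 0$ competitor in the forward direction, which leans on the range characterization of $CB$ and therefore on network connectivity and positivity of the susceptances embedded in $B$; the remainder of the argument is mechanical.
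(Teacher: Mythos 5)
Your proposal is correct, but it takes a genuinely different route from the paper. The paper first writes down the KKT conditions of \eqref{eq:uedp} and extracts $\omega^*=0$ from the dual side: stationarity in $\tilde\theta$ gives $BC^T\nu^*=0$, hence $\nu^*=\zeta\mathbf{1}$ by connectivity, stationarity in $\omega$ gives $\omega^*=\nu^*$, and summing \eqref{eq:uedp.e} over buses with $\mathbf{1}^TC=0$ forces $\zeta=0$; it then proves each direction by contradiction, constructing a strictly better feasible point of the other problem. You instead stay entirely primal: you build the explicit competitor $(q^*,q^*,0,C^TL^{\dagger}(q^*-d))$, whose feasibility rests on the range identity $LL^{\dagger}(q^*-d)=q^*-d$ for $\mathbf{1}^T(q^*-d)=0$, and let $D\succ 0$ force $\omega^*=0$; the same construction then yields the value identity $\min\eqref{eq:uedp}=\min\eqref{eq:edp2}$ which settles both directions. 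Your argument is more elementary (no dual multipliers, no constraint qualification discussion) and it sidesteps a small reliance in the paper's sufficiency step, where the contradiction is run against an assumed \emph{optimal} competitor so that $\omega^\circ=0$ can be invoked; your per-feasible-point bound $\mathbf{1}^TJ(q)+\tfrac12\omega^TD\omega\ge\mathbf{1}^TJ(q)$ avoids that. The paper's KKT route, in exchange, produces the dual facts ($\nu^*=\omega^*=0$, the pricing identity \eqref{eq:planner's_pricing}) that are reused later. One caveat, which applies equally to the paper's own proof: the identification $\tilde\theta^*=C^TL^{\dagger}(q^*-d)$ only follows from $CB\tilde\theta^*=q^*-d$ if $\tilde\theta$ is understood as $C^T\theta$ (i.e., restricted to the cut space); for a meshed network a cycle-space component could otherwise be added without affecting feasibility or cost, so your appeal to Lemma~\ref{lm:edp} implicitly uses this restriction, just as the theorem statement does.
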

% \begin{proof}
% See Appendix~\ref{apx:proof_of_teo_planner}.
% \end{proof}

\begin{corollary}
The optimum of the planner's problem realizes
\bi
\item (primary control) frequency stabilization $\dot \omega =0$, i.e., \eqref{eq:uedp.e};
\item (secondary control) the nominal frequency $\omega^* =0$;
\item (tertiary control) the economic dispatch $q^*$ of \eqref{eq:edp2};
\item (compatible participants' incentives) fully incentivized generation $\nabla J(p^*) = \lambda^*\cdot \mathbf{1} - H \eta^* $.
\ei
\end{corollary}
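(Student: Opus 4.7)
The plan is to dispatch the four bullet points in turn, with three following essentially immediately from Theorem~\ref{teo:planner} and only the incentive-compatibility claim requiring a brief KKT computation. The primary-control bullet is immediate: feasibility of \eqref{eq:uedp.e} forces $q^*-d-D\omega^*-CB\tilde\theta^*=0$, and substituting this into the swing equation \eqref{eq:swingdynamics:2} gives $M\dot\omega=0$, hence $\dot\omega=0$ since $M\succ 0$. The secondary- and tertiary-control bullets are verbatim Theorem~\ref{teo:planner}: it asserts $\omega^*=0$ and that $q^*$ is an optimal solution to \eqref{eq:edp2}.

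For the last bullet I would form the Lagrangian of \eqref{eq:uedp} using the multipliers $\alpha,\lambda,\eta,\nu$ declared alongside \eqref{eq:uedp.b}--\eqref{eq:uedp.e} and then read off the stationarity conditions in each primal variable. The derivative in $p$ gives $\nabla J(p^*)=\alpha^*$; the derivative in $\omega$ yields $D(\omega^*-\nu^*)=0$, so $\omega^*=0$ together with $D\succ 0$ forces $\nu^*=0$; and the derivative in $q$ collapses, once $\nu^*=0$ is substituted, to precisely $\alpha^*=\lambda^*\mathbf{1}-H\eta^*$. Chaining these two identities then delivers the desired incentive-compatibility relation $\nabla J(p^*)=\lambda^*\mathbf{1}-H\eta^*$.

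A last sanity check is the stationarity condition in $\tilde\theta$, which reduces to $BC^T\nu^*=0$ and is therefore automatic from $\nu^*=0$. The only real bookkeeping item, and the main (mild) obstacle I would watch for, is to track the sign conventions of the equality and inequality multipliers so that the relation extracted from the $q$-stationarity indeed matches the LMP form $\lambda^*\mathbf{1}-H\eta^*$ already used earlier in the paper; beyond that, no additional convexity or regularity argument past those already invoked in Theorem~\ref{teo:planner} is required, so the corollary really is a one-paragraph consequence of the theorem together with a direct KKT read-off.
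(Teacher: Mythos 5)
Your proposal is correct and follows essentially the same route as the paper: the first three bullets are read off from feasibility of \eqref{eq:uedp.e} (substituted into the swing dynamics) and Theorem~\ref{teo:planner}, while the incentive-compatibility bullet is exactly the paper's argument via the KKT conditions of \eqref{eq:uedp} — stationarity in $p$, $\omega$ and $q$ giving $\nabla J(p^*)=\alpha^*$, $\nu^*=\omega^*=0$, and $\alpha^*=\lambda^*\cdot\mathbf{1}-H\eta^*-\nu^*$. Your sign bookkeeping matches the paper's Lagrangian \eqref{eq:base_Lagrangian} and its stationarity conditions \eqref{eq:UEDP_stationarity.a}--\eqref{eq:UEDP_stationarity.c}, so no gap remains.
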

\noindent
The last statement follows from the KKT conditions of \eqref{eq:uedp}, in particular $\nu^* =\omega^* =0$.
Indeed, the planner's problem \eqref{eq:uedp} suggests another way of defining clearing prices based on dual optimizes $(\lambda^*,\eta^*,\nu^*)$, given by $\lambda^*\cdot \mathbf{1} - H \eta^* - \nu^*$, such that they are incentive compatible with the economic dispatch $q^*$:
\beq\label{eq:planner's_pricing}
\nabla J(q^*) = \lambda^*\cdot \mathbf{1} - H \eta^* - \nu^* \ .
\eeq
This means of pricing essentially boils down to the canonical LMPs $\lambda^*\cdot \mathbf{1} - H \eta^* $ with $\nu^*=0$.

Central to our developments will be the Lagrangian for the convex planner's problem \eqref{eq:uedp}, i.e., 
\begin{small}
\beq
 \begin{aligned}\label{eq:base_Lagrangian}
		 & L(p,q,\omega,\tilde \theta, \alpha,\lambda,\eta,\nu )\\
		 := \; &     \underbrace{  \mathbf{1}^T  J(p)}_{\textrm{generators}} +  \underbrace{\frac{1}{2} \omega^T D\omega \!+\! \nu^T\left( q\!-\!d\!-\!D\omega \!-\!CB\tilde\theta \right) }_{\textrm{network}} \\ 
		   & \!+\!\!\!\! \underbrace{\alpha^T(q\!-\!p)}_{\textrm{generators or market}}
		  \!\!\!\! - \underbrace{\lambda \cdot   \mathbf 1^T (q\!-\!d)   \!+\! \eta^{T} \left(  H^T(q\!-\!d) \!-\! F  \right) }_{\textrm{market}}  \ ,
\end{aligned}
\eeq
\end{small}%
which we refer to as the planner's Lagrangian, with the potential responsible party for each term.
It is easy to see that \eqref{eq:base_Lagrangian} is convex in the primal variables $(p,q,\omega,\tilde \theta)$ and concave (linear) in the dual variables $(\alpha,\lambda, \eta,\nu)$.
The KKT conditions establish a bijective mapping between a min-max saddle point $(p^*,q^*,\omega^*,\tilde \theta^*,\alpha^*,\lambda^*, \eta^*\ge 0,\nu^*)$ of the planner's Lagrangian \eqref{eq:base_Lagrangian} and an optimal primal-dual solution to the planner's problem \eqref{eq:uedp}.
We further refer to a function as a \emph{reduced} planner's Lagrangian, if it is the optimum of the planner's Lagrangian \eqref{eq:base_Lagrangian} over a subset of the primal and dual variables. 
% in any order to solve for their respective minimizers and maximizers. %\dennice{I am confused by the notion or ordering here? and not sure what the locate the ... means}

\subsection{Real-Time Interactive Structure}

In practice, the planner's problem \eqref{eq:planner}
% The planner's problem defined above
is not implementable due to the lack of knowledge of generators' cost functions. This poses significant challenges for the grid planner to realize economic dispatch in an incentive-compatible manner. 
To overcome this obstacle, we propose to use the real-time interaction among the grid, market, and participants to automatically achieve all the economic and frequency control goals.
We thus consider a continuous-time setting and investigate two classes of dynamic bidding mechanisms, based on quantity and price, that allow each generator to determine its own bid while simultaneously allowing the market to update its prices and dispatch.

We propose a unified framework for the grid-market-participant loop, with a schematic layout of the interactive structure shown in Fig.~\ref{fig:interactive_structure}. 
The power network shares real-time bus frequencies ($\omega$) and (given) inelastic demand ($d$) with the market. The market determines the signals of clearing prices ($\pi$) and generation dispatch ($q$) based on the network conditions and generators' bids.
The bids can take the form of a quantity ($p$), suggesting the desired output of a generator, or a price ($\alpha$), indicating the desired unit price of a generator for its output. 
Each individual generator responds to the market signals by implementing the prescribed 
dispatch and updating its own bid to reflect its preference to the market. 
The dispatch immediately implemented affects the power network dynamics and may interfere with the frequency stability.

\begin{figure}[ht]
    \centering
    \includegraphics[width=222pt, height=131pt]{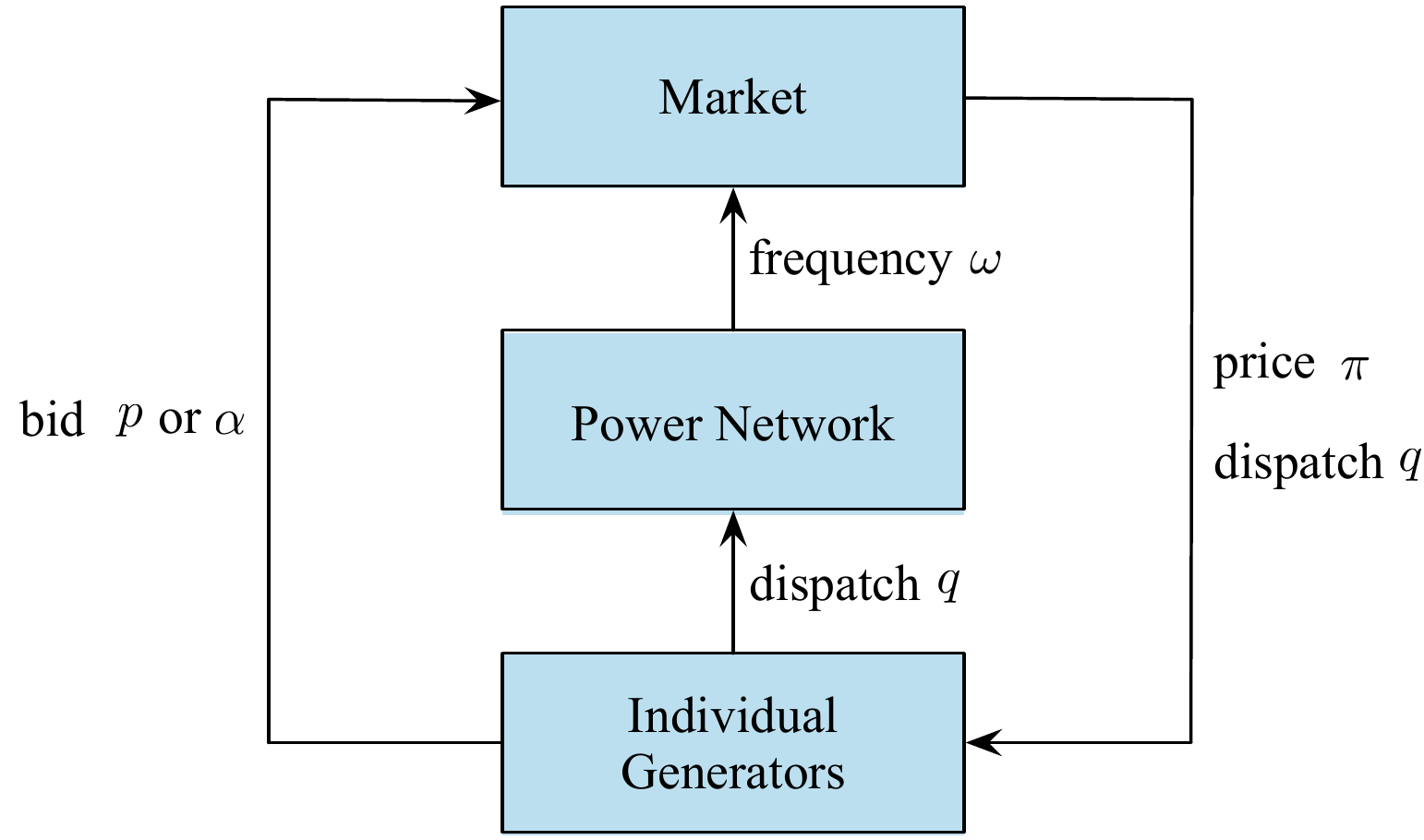}
    \caption{Interactive structure. Only time varying exchanged information is depicted.}
    \label{fig:interactive_structure}
\end{figure}

Inspired by \eqref{eq:planner's_pricing} from the planner's problem \eqref{eq:uedp}, we then define transient clearing prices $\pi$ in Fig.~\ref{fig:interactive_structure} to generalize the canonical LMPs as follows.
\begin{definition}
Market clearing prices are defined as
	\beq\label{eq:DLMP}
	 	\pi := \lambda \cdot \mathbf{1} - H\eta -\nu \ .
	\eeq
%\footnote{We abuse the notation $\pi$ to denote both LMPs and DLMPs, which however should be clear from their contexts.}
\end{definition}

%\enrique{Eq. \eqref{eq:DLMP} are being referred to for quite some time. It's a little odd that we just define them here.}

\noindent
%Recall \eqref{eq:uedp.c}-\eqref{eq:uedp.e} \dennice{Is there something missing here? how does this relate to the definition are you interpreting it?}.
Intuitively, $\lambda$ prices global network power imbalance, $H \eta$ prices line congestion, and $\nu$ prices local bus power imbalance. 
This dynamic version of LMPs can be interpreted as \emph{transient shadow prices}, and are bus-dependent and time-varying, embodying the sufficient control authority of the market to react to the changing network operational conditions and participants' bids.

The fundamental challenge for the coupled system in Fig.~\ref{fig:interactive_structure} to simultaneously realize the primary, secondary and tertiary control, with a particular guarantee for compatible participants' incentives, now boils down to two problems. 
First, \emph{How to characterize individual bidding behavior?}
Second, \emph{How to design market control laws of pricing and dispatch?}
We aim to identify their underlying connections and address them systematically.

\subsection{Rational Bidding of Individual Generators}

We provide a principled approach to the first problem by capturing the incentives of individual participants, which are assumed to be rational price takers.
A rational generator $j\in\mathcal{N}$ can be modeled to pursue an input-output optimization of a bidding problem, parameterized by market dispatch $q_j$ and price $\pi_j$ (inputs), that decides its bid for quantity $p_j$ or price $\alpha_j$ (outputs). Therefore, we formulate a general form of the bidding problem for each generator $j$ as

\noindent
\textbf{Individual generator bidding problem}\\
input: \emph{clearing price $\pi$ and dispatch $q$}; output: \emph{bid $p,\alpha$}
\bseq\label{eq:individual_problem_general}
\bq
\max_{p_j} && U_j(p_j;q_j,\pi_j) \\
\mathrm{s.t.} && G_j(p_j;q_j,\pi_j) \le  0 \ : \  \alpha_j
\eq
\eseq
where $U_j(\cdot)$ and $G_j(\cdot)$ are respectively real-valued concave and convex functions to represent the objective and constraint of generator $j$. 
We use the dual variable $\alpha_j$, a proxy for the marginal cost, as the price bid. We will provide more intuitions later.
Note that only one type of bid $p$ or $\alpha$ will be present and the general expression describing inequality constraints also captures equality constraints.

%\emmargin{Lagrangian-based? not always gradient. right?}
We adopt a gradient-based methodology to characterize the rational bidding behavior of individual generators. 
To solve the bidding problem \eqref{eq:individual_problem_general}, we define its Lagrangian $L_j(p_j,\alpha_j;q_j,\pi_j)$ and choose between
\bseq\label{eq:gen_gradient_general}
\begin{align}
    \dot p_j & =  \gamma_j^p\cdot \nabla_{p_j} L_j  \quad \quad \quad (\textrm{resp.~} \dot \alpha_j  =  -\gamma_j^{\alpha} \cdot \nabla_{\alpha_j } L_j ) 
    \label{eq:gen_gradient_general.a}\\
    p_j &=  \arg \max_{p_j} L_j     \quad (\textrm{resp.~} \alpha_j  =  \arg \min_{\alpha_j} L_j )
    \label{eq:gen_gradient_general.b}
\end{align}
\eseq
as the individual dynamic gradient play \cite{shamma2005fictitious} or best response for bidding.
We will discuss next the explicit formulations of the bidding problem \eqref{eq:individual_problem_general} and the resulting bidding behavior, and further develop the countermeasure of market control laws to address the second problem.

%\enrique{This comment may not be implemented this round if you don't want. I think that section III.A could be two section. One in which you describe the aligned condition and it's implications, and another one in which you teach how to derive $L_m$ from $\bar L$ and show how such derivation implicitly designs (13).}

\section{Aligned Market Dynamics}\label{sec:aligned_MD}

In this section, we develop a systematic design of market dynamics that is able to accommodate a family of participants' bidding behavior, referred to as \emph{aligned bidding}. 
The design is inspired by an formulation of a network problem based on the swing dynamics \eqref{eq:swingdynamics} as well as its connection with the planner's Lagrangian \eqref{eq:base_Lagrangian}.
We will first characterize the general paradigm of alignment, and then showcase two specific examples of such market dynamics designs from existing literature.

\subsection{Network Problem}

We first formulate a network problem that implicitly embodies the physical swing dynamics \eqref{eq:swingdynamics}:

\noindent
\textbf{Network problem}\\
input: \emph{dispatch $q$}; output: \emph{frequencies $\omega$}
\bseq\label{eq:network_problem}
\bq
\min_{\omega,\tilde \theta} && \frac{1}{2} \omega^T D \omega \\
\mathrm{s.t.} && q-d-D\omega -CB\tilde \theta = 0  \ : \   \nu 
\eq
\eseq
By defining its Lagrangian
\beq
L_n(\omega,\tilde \theta,\nu): = \frac{1}{2} \omega^T D \omega + \nu^T(q-d-D\omega -CB\tilde \theta) \ ,
\eeq
we can express the swing dynamics \eqref{eq:swingdynamics} as
\bseq\label{eq:swing_dynamics_Lagrangian_interpretation}
\begin{align}
\label{eq:swing_dynamics_Lagrangian_interpretation.a}
    \omega \ = \ & \arg \min_{\omega} L_n    \\
\label{eq:swing_dynamics_Lagrangian_interpretation.b}
    \dot{\tilde \theta} \ = \ &  -\Gamma^{\tilde \theta} \nabla_{\tilde \theta} L_n \\
    \label{eq:swing_dynamics_Lagrangian_interpretation.c}
    \dot \nu  \ = \ & \Gamma^{\nu} \nabla_{\nu} L_n
\end{align}
\eseq
with $\Gamma^{\tilde \theta} = B^{-1}$ and $\Gamma^{\nu} = M^{-1}$. Note that \eqref{eq:swing_dynamics_Lagrangian_interpretation.a} enforces $\omega \equiv \nu $ even in transient, which \emph{allows us to use $\omega \leftrightarrow \nu$ interchangeably}. Therefore, the clearing prices $\pi$ in \eqref{eq:DLMP} are equivalently
\beq
\pi = \lambda \cdot \mathbf{1} - H\eta -\omega \ .
\eeq

Note that the network problem \eqref{eq:network_problem} can be viewed as part of the planner's problem \eqref{eq:uedp}.
More formally, we define the relation between the network and the grid planner as \emph{aligned} below.
\begin{definition}\label{def:network_alignment}
The power network dynamics \eqref{eq:swingdynamics} is aligned with the grid planner's goals in the sense that
\beq\label{eq:network_alignment}
\nabla_u L_n = \nabla_u L
\eeq
holds for the network variables $u = \omega, \tilde\theta , \nu$.
% \eqref{eq:swing_dynamics_Lagrangian_interpretation} is exactly matched by
% \bseq\label{eq:network_alignment}
% \begin{align}
% \label{eq:network_alignment.a}
%     \dot{\tilde \theta} \ = \ &  -\Gamma^{\tilde \theta} \nabla_{\tilde \theta} \bar L \\
% \label{eq:network_alignment.b}
%     \dot \nu  \ = \ & \Gamma^{\nu} \nabla_{\nu} \bar L
% \end{align}
% \eseq
% where $\bar L$ is a reduced planner's Lagrangian:
\end{definition}
The alignment \eqref{eq:network_alignment} means that  \eqref{eq:swing_dynamics_Lagrangian_interpretation} can be equivalently expressed in terms of the planner's Lagrangian $L$ in \eqref{eq:base_Lagrangian}.
It further implies that the physical response of the power network automatically implements a partial saddle flow \eqref{eq:network_alignment} of $L$ or its possibly reduced variant.
Similar interpretations are identified in \cite{zhao2016unified,li2016connecting,mallada2017optimal,wang2017distributed1,wang2017distributed2}.
Since the network chooses \eqref{eq:swing_dynamics_Lagrangian_interpretation.a} for $\omega$ (reduction), we accordingly define a reduced planner's Lagrangian
\beq\label{eq:reduced_planner's_Lagrangian_bar_L}
    \bar L (p,q,\tilde \theta, \alpha,\lambda,\eta, \nu) \ := \  \min_{\omega} \ L(p,q,\omega,\tilde\theta, \alpha, \lambda,\eta,\nu)   \ .
\eeq

\subsection{Aligned Bidding and Market Dynamics Design}

We model the individual bidding behavior of generator $j\in\mathcal{N}$ by the dynamic gradient play or best response in \eqref{eq:gen_gradient_general} with respect to a Lagrangian $L_j$ of its bidding problem. In light of the same gradient-based structure as \eqref{eq:network_alignment}, a particular family of participants' behavior is identified to satisfy the following notion of \emph{alignment}.
\begin{definition}\label{def:alignment}
Individual generators' bidding behavior is aligned with the grid planner's goals if 
\beq\label{eq:gen_gradient_alignment_matched}
\nabla_u L_j  =  \nabla_u \tilde L
\eeq
holds for the bidding variables $u=p_j,\alpha_j$,
% their strategy \eqref{eq:gen_gradient_general} is exactly matched by
% \bseq
% \begin{align}
%     \dot p_j & = - \gamma_j^p\cdot \nabla_{p_j} \tilde L  \quad \quad \quad (\textrm{resp.~} \dot \alpha_j  =  \gamma_j^{\alpha} \cdot \nabla_{\alpha_j } \tilde L ) 
%     \label{eq:gen_gradient_alignment_matched.a}\\
%     p_j &=  \arg \min_{p_j} \tilde L     \quad (\textrm{resp.~} \alpha_j  =  \arg \max_{\alpha_j} \tilde L )
%     \label{eq:gen_gradient_alignment_matched.b}
% \end{align}
% \eseq
where $\tilde L$ is a possibly reduced variant of $\bar L$ in \eqref{eq:reduced_planner's_Lagrangian_bar_L}.
% or the further reduced variant of $\bar L$.
%\dennice{Further reduced variant might be unclear, consider a footnote or other clarifying remark}.
\end{definition}
We will make the following assumption on $\tilde L$:
\begin{assumption}\label{ass:finiteness}
$\tilde L$ is a finite function in its domain.
\end{assumption}

\begin{remark}
In practice the value of $\tilde L$, or any other reduced planner's Lagrangians, may be infinite after the reduction. This circumstance usually arises when the planner's Lagrangian $L$ is linear with respect to the variables to be optimized. %\dennice{you mean minimized right?}.
Assumption~\ref{ass:finiteness} basically does not allow such variables to be optimized alone in the reduction.
An alternative to overcoming this limitation is to introduce an additional regularization term of the form of $\Vert x- \hat x\Vert^2$, with $\hat x$ being an auxiliary variable, whenever optimizing $x$ leads to infinite values~\cite{goldsztajn2019proximal,you2020saddle}. Therefore, we will implicitly assume all the reduced Lagrangians discussed later to be finite.
\end{remark}

This alignment between participants and the grid planner in Definition~\ref{def:alignment} also connects the way each individual participant bids \eqref{eq:gen_gradient_general} with a partial saddle flow of $\tilde L$ (or its possibly reduced variant), in addition to \eqref{eq:network_alignment} realized by the swing dynamics \eqref{eq:swingdynamics}.
Note that the saddle points of the planner's Lagrangian $L$ in \eqref{eq:base_Lagrangian}, and thus also $\tilde L$, optimally solve the planner's problem \eqref{eq:uedp} and achieve all of its goals. This connection inspires a design of aligned market dynamics that complements the saddle flow through pricing and dispatch.

 %These bidding mechanisms therefore represent \dennice{??} a joint effort towards an optimal solution to the planner's problem \eqref{eq:uedp}. 

In general, with either dynamic bidding mechanism of quantity $p$ or price $\alpha$, the market seeks to solve an input-output variant of the planner's problem \eqref{eq:uedp}, where some quantities, e.g., bids and frequencies, are assumed as given (inputs), and prices and dispatch are to be computed and released to participants (outputs). We formulate such a problem in a generic form as follows:
%\dennice{what do you mean by uniformly formalize?, is there a word missing?}

\noindent
\textbf{Market problem}\\
input: \emph{bids $\alpha,p$ and frequencies $\omega$}; output: \emph{clearing prices $\pi$ and dispatch $q$}
\bseq\label{eq:market_problem_general}
\bq
\min_{q} && U_m(q;p,\alpha,\omega) \\
\mathrm{s.t.} && G_m(q;p,\alpha,\omega) \le 0 \ : \  (\lambda,\eta\ge 0) 
\eq
\eseq
%\enrique{Is $\omega$ an optimization variable of the market problem?}
where $U_m(\cdot)$ and $G_m(\cdot)$ are real- and vector-valued convex functions that respectively represent the market objective and constraints.  
%\dennice{it might be useful here to connect that to the types of dual variable noted}. 

We could also develop gradient-based market control laws for pricing and dispatch that can be interpreted as a primal-dual algorithm to solve the market problem \eqref{eq:market_problem_general}.
In particular, we define for the market problem \eqref{eq:market_problem_general} its Lagrangian $L_m(q,\lambda,\eta;p,\alpha,\omega)$ and likewise choose between
\bseq\label{eq:market_gradient_general}
\begin{align}
    \dot u & = - \Gamma^u \nabla_u L_m  \quad\quad~ (\textrm{resp.~} \dot u  =  \Gamma^u \left[ \nabla_u L_m \right]^+_{\eta} )
    \label{eq:market_gradient_general.a}\\
    u &=  \arg \min_u L_m \quad (\textrm{resp.~}  u =  \arg \max_{u:\eta\ge 0} L_m)
    \label{eq:market_gradient_general.b}
\end{align}
\eseq
for any primal variable $u=q$ (resp. dual variable $u=\lambda,\eta$) to solve for its optimal value.
The projection $[\cdot]^+_{\eta}$ applies only to $u=\eta$, which guarantees that the trajectory of $\eta(t)$ starting from an arbitrary non-negative point remains non-negative. 
%\emmargin{$\in$ or $\subset$? This notation is not defined before.}

Given the alignment of both the power network \eqref{eq:network_alignment} and participants \eqref{eq:gen_gradient_alignment_matched}, the key design is to exploit $\tilde L$ and the choice of participants' bidding update (dynamic gradient play or best response) in \eqref{eq:gen_gradient_alignment_matched}, and extract the corresponding market problem \eqref{eq:market_problem_general} from the planner's problem \eqref{eq:uedp}.
In particular, suppose $\tilde L$ is the optimum of $\bar L$ over a subset of (reduced) variables $v_m$, i.e., $\tilde L = \bar L\vert_{v_m^*}$ with $v_m^*$ being the optimizer, and the bidding of each generator $j$ involves \eqref{eq:gen_gradient_general.b} for a subset of its variables $v_j$. We propose to design the market such that
\begin{enumerate}
    \item for the market variables $u= q,\lambda,\eta$,
    \beq\label{eq:market_alignment}
    \nabla_u L_m = \nabla_u  \bar L\vert_{v_j^*} 
    \eeq
    holds (thus also aligned), where $v_j^*$ is the optimizer;
    % the market control laws \eqref{eq:market_gradient_general} remain exactly the same if $L_m$ is replaced with $\bar L \big \vert_{v_j=v_j^*,j\in\mathcal{N}}$, where $(v_j^*,j\in\mathcal{N})$ is the corresponding optimizer (thus also aligned);
    \item \eqref{eq:market_gradient_general.b} is chosen for $v_m$ to yield $\tilde L$ in \eqref{eq:gen_gradient_alignment_matched}.
\end{enumerate}
Based on the bidding mechanism, we can always apply optimization decomposition to the planner's problem \eqref{eq:uedp} to obtain the desired market problem \eqref{eq:market_problem_general} (see examples in Section~\ref{ssec:ill_examples}). 
By this means, the market control laws \eqref{eq:market_gradient_general} are basically designed to complement the saddle flow of a particular reduced planner's Lagrangian that accounts for all the reduced parts, given by
\beq\label{eq:reduced_Lagrangian_final}
\hat L : =\bar L\vert_{v^*_m,v^*_j,j\in\mathcal{N}} \ .
\eeq

%\enrique{I found this to be critical. This is indeed the main design approach, yet it is easy to miss and hard to reference back. I kind of think that (14)/now (26) and the comments around it should appear here.}

Such aligned market dynamics render the joint dynamics of the grid, market, and participants a (projected) saddle flow of $\hat L$. 
Thus, the grid-market-participant loop can be expressed as
\beq\label{eq:grid-market-participant_loop}
\begin{bmatrix}
	T^z & \\
	& T^\sigma   
\end{bmatrix}
\begin{bmatrix}
    \dot z \\
    \dot \sigma
\end{bmatrix}
= 
\begin{bmatrix}
   - \nabla_z \hat L(z,\sigma)  \\
    \left[\nabla_{\sigma} \hat L(z,\sigma) \right]^+_{\eta}
\end{bmatrix}  \ ,
\eeq
where $z$ and $\sigma$ are subsets of the respective primal variables $(p,q,\tilde\theta)$ and the dual variables $(\alpha,\lambda,\eta, \nu)$, which are updated using the gradient information via \eqref{eq:gen_gradient_general.a}, \eqref{eq:swing_dynamics_Lagrangian_interpretation.b}, \eqref{eq:swing_dynamics_Lagrangian_interpretation.c} and \eqref{eq:market_gradient_general.a}. 
We slightly abuse the notation such that the projection $[\cdot]^+_{\eta}$ only applies to part of the gradient corresponding to $\eta$ in $\sigma$. 
The rest variables are updated based on \eqref{eq:gen_gradient_general.b}, \eqref{eq:swing_dynamics_Lagrangian_interpretation.a} and \eqref{eq:market_gradient_general.b}.

% \enrique{The title of the section is Aligned Markets, yet it takes up until end of section D to know why we refer to this dynamics as aligned. We should perhaps define what we mean by aligned right here as a definition. To me align means that the combined actions of participants, network, and market operators amounts to a reduced primal-dual algorithms.}

To gain insights into this closed-loop interaction \eqref{eq:grid-market-participant_loop}, we next show that its equilibria correspond to optimal solutions to the planner's problem \eqref{eq:uedp}. We further show that it converges asymptotically to one such equilibrium point under mild conditions.
If $\eta$ is contained in $\sigma$, define
\beq\label{eq:initial_set}
	\mathbb{I}:=\bigg\{(z,\sigma) \ | \      \eta\in\mathbb{R}_{\ge0}^{2|\mathcal{E}|} \bigg\}
\eeq
as the set of initial points in order to guarantee a non-negative trajectory for $\eta$; otherwise, define $\mathbb{I} := \mathbb{R}^{|z|+|\sigma|}$.
Then the equilibrium of the grid-market-participant loop \eqref{eq:grid-market-participant_loop} can be characterized by the following theorem (proof in Appendix~\ref{apx:proof_of_teo_eqm}):
\begin{theorem}\label{teo:eqm}
    Let Assumption~\ref{ass:finiteness} hold.
	For the grid-market-participant loop \eqref{eq:grid-market-participant_loop}, a point $(z^*,\sigma^*)\in\mathbb{I}$ is an equilibrium if and only if $(z^*,\sigma^*)$ corresponds to an optimal primal-dual solution to the planner's problem \eqref{eq:uedp}.
\end{theorem}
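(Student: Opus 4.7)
The plan is to establish the biconditional in three stages: (i) equilibria of the saddle-flow dynamics \eqref{eq:grid-market-participant_loop} correspond to saddle points of the reduced planner's Lagrangian $\hat L$ over the domain that enforces $\eta \geq 0$; (ii) such saddle points lift, via the reduction structure, to saddle points of the full planner's Lagrangian $L$ in \eqref{eq:base_Lagrangian}; and (iii) saddle points of $L$ are in bijection with optimal primal-dual solutions of the planner's problem \eqref{eq:uedp} by convexity and standard KKT duality.

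In the first stage, I would set $\dot z = 0$ and $\dot \sigma = 0$ in \eqref{eq:grid-market-participant_loop}. Because $T^z$ and $T^\sigma$ are invertible diagonal matrices with positive entries, this yields $\nabla_z \hat L(z^*,\sigma^*) = 0$ together with $[\nabla_\sigma \hat L(z^*,\sigma^*)]^+_\eta = 0$. For components of $\sigma$ other than $\eta$ the projection is inactive, so the condition reduces to $\nabla_\sigma \hat L = 0$. For the $\eta$ component, applying the non-expansiveness inequality \eqref{eq:nonexpansive} at any $\eta \geq 0$ yields the complementarity system $\eta^* \geq 0$, $\nabla_\eta \hat L(z^*,\sigma^*) \leq 0$, and $(\eta^*)^T \nabla_\eta \hat L(z^*,\sigma^*) = 0$; non-negativity $\eta^* \geq 0$ is guaranteed by $(z^*,\sigma^*) \in \mathbb{I}$. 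These are exactly the saddle-point conditions for $\hat L$ over the domain with $\eta \geq 0$.

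In the second stage, recall that $\hat L = \bar L \vert_{v_m^*, v_j^*, j\in\mathcal{N}}$ is built from $L$ by successively substituting the network variable $\omega$ (via \eqref{eq:swing_dynamics_Lagrangian_interpretation.a}), the market-reduced variables $v_m$, and the participant-reduced variables $v_j$ with their respective optimizers. Under Assumption~\ref{ass:finiteness} each reduction remains finite, and since $L$ is convex in the primal and concave (in fact linear) in the dual variables, standard minimax arguments ensure that a saddle point of $\hat L$ lifts to a saddle point of $L$ by re-inserting these optimizers; in particular, the network reduction forces $\omega^* = \nu^*$. Conversely, any saddle point of $L$ restricts to a saddle point of $\hat L$ by projection onto the non-reduced coordinates. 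Finally, since the planner's problem \eqref{eq:uedp} is convex with affine constraints, strong duality gives a bijection between saddle points of $L$ and optimal primal-dual solutions of \eqref{eq:uedp}. Chaining these equivalences establishes both directions of the theorem.

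The main obstacle I expect is the second stage: verifying that reduction-by-optimization commutes with saddle-point extraction across multiple successive reductions. This is delicate because $L$ is only convex (not strictly so) in several primal variables and linear in the dual variables, so reductions can in principle blow up or admit non-unique optimizers. Assumption~\ref{ass:finiteness}, together with the regularization trick noted in the remark, is precisely what ensures each reduced Lagrangian is finite and the optimizers at each step are well-defined and consistent with a single global saddle point of $L$, so that the extension and restriction maps are mutual inverses between saddle points of $\hat L$ and those of $L$.
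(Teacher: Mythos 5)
Your overall route is essentially the paper's: set the right-hand side of \eqref{eq:grid-market-participant_loop} to zero, use Assumption~\ref{ass:finiteness} to know the inner optimizers $\phi(z,\sigma)$ satisfy $\nabla_\phi L=0$, and conclude that stationarity of $\hat L$ in $(z,\sigma)$ together with inner stationarity reproduces the KKT conditions \eqref{eq:UEDP_KKT} of \eqref{eq:uedp}, and conversely. The paper instantiates your stage (ii) not by abstract minimax lifting but by the envelope/chain-rule identity $\nabla_u \hat L = \nabla_u L + (\partial\phi/\partial u)^T\nabla_\phi L$, which immediately gives $\nabla_z L=\nabla_\sigma L=0$ at an equilibrium; your "reduction preserves saddle points" argument buys the same conclusion, and your worry about commuting reductions is exactly what this gradient identity plus Assumption~\ref{ass:finiteness} disposes of.

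Two points need repair. First, at an equilibrium the complementarity system for $\eta$ ($\eta^*\ge 0$, $\nabla_\eta\hat L\le 0$, $(\eta^*)^T\nabla_\eta\hat L=0$) does not follow from the non-expansiveness inequality \eqref{eq:nonexpansive} — that inequality is one-sided and is the tool for the Lyapunov argument in Theorem~\ref{teo:stability} — it follows directly from the definition \eqref{eq:def_proj} of the projection together with $\eta(t)\ge 0$ on trajectories from $\mathbb{I}$: $[\nabla_{\eta_e}\hat L]^+_{\eta_e}=0$ forces either $\nabla_{\eta_e}\hat L=0$ or $\nabla_{\eta_e}\hat L\le 0$ with $\eta_e^*= 0$. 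Second, the theorem is stated for the general loop in which $\eta$ may not be a dynamic state at all: if $\eta$ belongs to the reduced variables (it sits in $\phi$, and $\mathbb{I}=\mathbb{R}^{|z|+|\sigma|}$, no projection present), your stage (i) says nothing about the inequality constraint. In that case the paper extracts primal feasibility \eqref{eq:UEDP_primalfeas.g}, dual feasibility \eqref{eq:UEDP_dualfeas.i} and complementary slackness \eqref{eq:UEDP_complementary.j} from the requirement that the inner maximization over $\eta\ge 0$ in \eqref{eq:market_gradient_general.b} be finite (Assumption~\ref{ass:finiteness}), i.e. $\nabla_\eta L(z^*,\sigma^*,\phi^*)\le 0$ and $\diag(\eta^*)\nabla_\eta L=0$; your "standard minimax" lifting can absorb this, but the case split and the finiteness argument must appear explicitly for the proof to cover the theorem as stated.
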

%\enrique{Do we need the initial condition here?/}
% \begin{proof}
% See Appendix~\ref{apx:proof_of_teo_eqm}.
% \end{proof}

Theorem~\ref{teo:eqm} indicates that each equilibrium point not only restores the nominal frequency, but also achieves underlying economic dispatch - demand met in an economically efficient manner and line thermal limits respected - while fulfilling compatible participants' incentives through individual bidding.

% Noticeably, due to $\omega^*=0$ at equilibrium, DLMPs boil down to
% \beq
% \pi:=\lambda^*\cdot \mathbf{1} -H\eta^*,
% \eeq
% which is consistent with a canonical definition of static LMPs \cite{cai2017distributed}.

We proceed to show the convergence of the closed-loop system \eqref{eq:grid-market-participant_loop} to one equilibrium point. Given the initial condition of $\mathbb{I}$,
define
\beq\label{eq:equilibrium_set}
\mathbb{E}:=\left\{(z,\sigma) \ | \  \dot z,\dot \sigma= 0 \right\}
\eeq
as the set of its equilibrium points. We make the following assumption on the system observability that leads to the asymptotic stability of the equilibrium set described in Theorem \ref{teo:stability} (proof in Appendix~\ref{apx:proof_of_teo_stability}).
%\dennice{blah}:

\begin{assumption}[Observability]\label{ass:observability}
The grid-market-participant loop \eqref{eq:grid-market-participant_loop} has an observable output such that for any of its trajectories $(z(t),\sigma(t))$ that satisfy $\hat L(z^*,\sigma(t)) \equiv \hat L(z^*,\sigma^*)$ and $\hat L(z(t),\sigma^*) \equiv \hat L(z^*,\sigma^*)$, we have $\dot z, \dot \sigma \equiv 0$.
\end{assumption}

\begin{theorem}\label{teo:stability}
	If Assumptions~\ref{ass:finiteness} and \ref{ass:observability} hold, the equilibrium set $\mathbb{E}$ is globally asymptotically stable on $\mathbb{I}$. In particular, starting from any initial point in $\mathbb{I}$, a trajectory $(z(t),\sigma(t))$ of the grid-market-participant loop \eqref{eq:grid-market-participant_loop} remains bounded for $t\ge 0$ and converges to $(z^*,\sigma^*)$ with $t \rightarrow \infty$, where $(z^*,\sigma^*)$ is one specific equilibrium point in $\mathbb{E}$.  
\end{theorem}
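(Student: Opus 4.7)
The plan is to establish global asymptotic stability by constructing a quadratic Lyapunov function centered at an equilibrium point and invoking LaSalle's invariance principle. Let $(z^*,\sigma^*)\in\mathbb{E}$ be an arbitrary equilibrium, which exists and corresponds to an optimal primal-dual solution of the planner's problem \eqref{eq:uedp} by Theorem~\ref{teo:eqm}. Define
\begin{equation*}
    V(z,\sigma) \ := \  \tfrac{1}{2}(z-z^*)^T T^z (z-z^*) \, + \,  \tfrac{1}{2}(\sigma-\sigma^*)^T T^{\sigma}(\sigma-\sigma^*) \ .
\end{equation*}
Since the time-constant matrices $T^z, T^\sigma \succ 0$, the function $V$ is positive definite and radially unbounded with respect to $(z^*,\sigma^*)$.

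Next, I would differentiate $V$ along the trajectories of \eqref{eq:grid-market-participant_loop}. Substituting the dynamics yields
\begin{equation*}
    \dot V \ = \ -(z-z^*)^T \nabla_z \hat L \, + \, (\sigma-\sigma^*)^T \left[\nabla_\sigma \hat L\right]^+_{\eta} \ .
\end{equation*}
The non-expansive projection property \eqref{eq:nonexpansive}, applied to the $\eta$-components of $\sigma$ with $\eta^*\ge 0$, implies that the projected term is bounded by its unprojected counterpart, yielding $\dot V \le -(z-z^*)^T \nabla_z \hat L + (\sigma-\sigma^*)^T \nabla_\sigma \hat L$. Since $\hat L$ inherits convexity in $z$ and concavity in $\sigma$ from the planner's Lagrangian \eqref{eq:base_Lagrangian} (and the reductions in \eqref{eq:reduced_planner's_Lagrangian_bar_L} and \eqref{eq:reduced_Lagrangian_final} preserve these properties), the standard first-order inequalities give
\begin{equation*}
    \dot V \ \le \ \bigl[\hat L(z^*,\sigma)-\hat L(z,\sigma)\bigr] + \bigl[\hat L(z,\sigma)-\hat L(z,\sigma^*)\bigr]  = \hat L(z^*,\sigma) - \hat L(z,\sigma^*) \ \le \ 0 \ ,
\end{equation*}
where the last inequality uses the saddle-point property $\hat L(z^*,\sigma)\le \hat L(z^*,\sigma^*)\le \hat L(z,\sigma^*)$, which holds for $\sigma\in\mathbb{I}$ (so that $\eta\ge 0$) by Theorem~\ref{teo:eqm}. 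This establishes $V$ as a Lyapunov function, which immediately gives boundedness of the trajectory for any initial point in $\mathbb{I}$ (also verifying forward invariance of $\mathbb{I}$, since the projection keeps $\eta(t)\ge 0$).

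To upgrade boundedness to convergence, I would apply LaSalle's invariance principle. The set $\{\dot V=0\}$ forces $\hat L(z^*,\sigma)=\hat L(z,\sigma^*)=\hat L(z^*,\sigma^*)$ along the trajectory, and Assumption~\ref{ass:observability} then implies that the largest invariant set contained in $\{\dot V=0\}$ consists of equilibria, i.e., is a subset of $\mathbb{E}$. Therefore the $\omega$-limit set of any trajectory lies in $\mathbb{E}$. Finally, to promote convergence to a single equilibrium point, I would invoke the standard argument for saddle flows: pick any limit point $(z^\infty,\sigma^\infty)\in\mathbb{E}$ of the trajectory, rebuild the Lyapunov function centered at $(z^\infty,\sigma^\infty)$, and observe that the same argument shows this rebuilt function is monotonically non-increasing; since some subsequence drives it to zero, the whole trajectory converges to $(z^\infty,\sigma^\infty)$.

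The main obstacle is the careful handling of the projected dynamics in conjunction with the Lagrangian reductions embedded in $\hat L$: I must verify that the best-response reductions (of $\omega$, of the variables $v_m$, and of each generator's $v_j$) yield a Lagrangian $\hat L$ that is (i) finite-valued on the relevant domain by Assumption~\ref{ass:finiteness}, (ii) retains convexity--concavity, and (iii) whose gradients along $(z,\sigma)$ coincide with the right-hand side of \eqref{eq:grid-market-participant_loop}, so that the saddle-flow Lyapunov argument legitimately applies. Once this consistency is in place, the rest of the argument is the classical LaSalle-based saddle-flow convergence proof.
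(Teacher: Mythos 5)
Your proposal follows essentially the same route as the paper's proof: the same quadratic Lyapunov function weighted by $T^z,T^\sigma$, the same chain of bounds on $\dot V$ via the non-expansive projection, convexity--concavity of $\hat L$, and the saddle-point property, followed by the invariance principle plus Assumption~\ref{ass:observability} and the same "recenter the Lyapunov function at a limit point" argument for convergence to a single equilibrium. The only minor refinement in the paper is that, because the projected right-hand side is discontinuous, it invokes the invariance principle for Caratheodory systems rather than the classical LaSalle statement, but this is a technical detail rather than a different approach.
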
 
% \begin{proof}
% See Appendix~\ref{apx:proof_of_teo_stability}.
% \end{proof}

%Note that the initial condition of $\mathbb{I}$ requires only the cyber variables $\eta^-,\eta^+ \ge 0$. It is readily realizable since in practice they correspond to marginal congestion costs on lines that are always nonnegative in power markets. Remarkably, no initial condition is required on the power network states for the system to stabilize. 	

Theorem~\ref{teo:stability} implies that, under mild observability conditions, the aligned participants' bidding behavior together with the proposed market dynamics can essentially function as a feedback controller on the power network dynamics to simultaneously realize the cross-timescale primary, secondary and tertiary control with compatible participants' incentives. 
% Indeed, this notion of alignment connects the closed-loop interplay of the power network, the market and individual participants to the projected saddle flow dynamics of a reduced planner's Lagrangian, $\hat L$ in \eqref{eq:reduced_Lagrangian_quantity} or \eqref{eq:reduced_Lagrangian_price}, that preserves convexity-concavity and also saddle points with bijection to optimal solutions to the planner's problem \eqref{eq:uedp}.
Indeed, this notion of alignment establishes connections to the rationale of saddle flow dynamics~\cite{you2020saddle,goldsztajn2019proximal,dhingra2018proximal} that underlies such principled grid-market-participant loop \eqref{eq:grid-market-participant_loop}. 
%\dennice{this last part is unclear what does it backs up with desirable results mean?}
%\enrique{Not sure what this last sentence means.}

%Therefore, the fine-granularity modeling of stylized market evolution in the quantity bidding setup suggests the transient efforts towards market clearing consistently drive power networks to stabilize.

%We formally prove that the market dynamics controller \eqref{eq:qtyctr} of quantity bidding is able to stabilize the power network dynamics \eqref{eq:swingdynamics} in terms of eliminating power imbalance, restoring the nominal frequency and maintaining an optimal equilibrium with respect to UEDP.

\subsection{Illustrative Examples}\label{ssec:ill_examples}

We next present two examples of aligned market dynamics, based on the two common bidding mechanisms of quantity and price, respectively. 
The two examples take root in existing literature \cite{stegink2017unifying,stegink2017frequency,you18stabilization}.
For illustration purposes we follow a uniform choice of bidding behavior  (either \eqref{eq:gen_gradient_general.a} or \eqref{eq:gen_gradient_general.b} for all participants) as in~\cite{stegink2017unifying,you18stabilization}. We note, however, the results presented in this paper can accommodate mixed choices, thus generalizing previous works.
We also show through the examples that the observability condition in Assumption~\ref{ass:observability} is readily satisfied for the grid-market-participant loop \eqref{eq:grid-market-participant_loop}.
%\dennice{as part of the examples?}

%\enrique{Some of this examples are based on existing literature. We should mention this here.}

\subsubsection{Quantity Bidding}

The quantity bidding mechanism allows individual generator participants to bid their own decisions on the amount of electricity generation $p$ into the market. All the accepted quantity bids $p$ are respected and taken as the generation dispatch $q$. However, the market aims to match the supply with given demand and meet the network operational constraints by setting appropriate clearing prices $\pi$ to coordinate these quantity bids. 
%This setup generalizes to many practical supply-demand balancing markets with operational constraints.
%We further assume that each participant rationally interacts with the market without knowing the market mechanism or inferring the manoeuvres of other participants, i.e., participants are price-takers.
%It is similar to the Cournot model except that we take all market participants as price-takers. 
%In this section, we will present our market dynamics controller of quantity bidding, uncover its underlying rationale and prove its performance that satisfies the above goal.
%The market determines clearing prices, with which generators mainly interact by bidding their generation $p$, with the aim of balancing supply and demand in an economically efficient manner. 
% We model this interaction between the generators and the market as a continuous dynamical system. Gradient play is adopted to interpret both the rational bidding strategy of each individual generator and the canonical market clearing mechanism into their dynamic form. 
% In particular, we formulate below the standard static optimization problems from the perspectives of each individual generator and the market, respectively, and develop the specific model of market dynamics.

Since the dispatch $q_j$ of generator $j$ follows its quantity bid $p_j$, it is able to maximize its profit from the market, given the clearing price $\pi_j$ at bus $j\in\mathcal{N}$, by solving its bidding problem 
\begin{subequations}\label{eq:gen_quantity_bidding_problem}
\begin{eqnarray}
		\label{eq:gen_quantity_bidding_problem.a}
		U_j(p_j;\pi_j)  & := &      \pi_j p_j - J_j(p_j)    \\
		\label{eq:gen_quantity_bidding_problem.b}
		G_j(p_j;\pi_j) \le 0  & : &  \varnothing
	\end{eqnarray}	
\end{subequations}
The unconstrained problem \eqref{eq:gen_quantity_bidding_problem} immediately suggests
\beq\label{eq:L_j_quantity}
L_j(p_j;\pi_j) \ : = \  \pi_j p_j - J_j(p_j) \ ,
\eeq
and the dynamic gradient play of \eqref{eq:gen_gradient_general.a} on $p_j$ gives the bidding strategy of generator $j$ \cite{stegink2017unifying,you18stabilization}:
\begin{align}\label{eq:gradient_play_gen_quantity_bidding}
\tau_j^p \dot p_j  \  =	\      \pi_j  - \nabla J_j(p_j) \ , \quad j\in\mathcal{N} \ .
\end{align}
Note that \eqref{eq:gradient_play_gen_quantity_bidding} reveals a direct economic interpretation that generator $j$ tends to augment production $p_j$ if the offered clearing price $\pi_j$ exceeds its marginal cost $\nabla J_j(p_j)$; otherwise, it will curtail production $p_j$. It is therefore incentivized to adapt its production $p_j$ in a way that matches its marginal cost $\nabla J_j(p_j)$ with the clearing price $\pi_j$, which will bring it the maximum profit.
Therefore, \eqref{eq:gradient_play_gen_quantity_bidding} represents exactly the rational quantity bidding behavior of a price-taking generator.

Further, \eqref{eq:gradient_play_gen_quantity_bidding} satisfies the alignment condition in Definition~\ref{def:alignment} based on the following reduced planner's Lagrangian:
\beq\label{eq:aligned_participant_reduced_Lagrangian_quantity}
\tilde L(p,\tilde \theta, \lambda,\eta,\nu) : = \min_{q}\max_{\alpha} \ \bar L(p,q,\tilde \theta, \alpha,\lambda,\eta,\nu ) \ ,
\eeq
which leads to $q\equiv p$ even in transient, i.e., dispatch follows quantity bids, 
%and \emph{allows us to use $q \leftrightarrow p$ interchangeably in this context}, 
exactly as the bidding mechanism mandates.

Given \eqref{eq:gradient_play_gen_quantity_bidding} and \eqref{eq:aligned_participant_reduced_Lagrangian_quantity}, $v_m$ contains $q$ and $\alpha$, while $v_j$ is $\varnothing$, $j\in\mathcal{N}$. Based on \eqref{eq:market_alignment}, we aim for $L_m$ such that $\nabla_u L_m = \nabla_u \bar L$ holds for $u=q,\lambda,\eta$. 
Note that now the market enforces $q=p$. 
Then, by inheriting all the market-related terms (independent of $\omega$) from the planner's Lagrangian \eqref{eq:base_Lagrangian}, we obtain $L_m$ as 
\begin{small}
\beq
     L_m(q,\lambda,\eta;p)  : =   \alpha^T(q-p)       
		  - \lambda \cdot   \mathbf 1^T (q-d)  + \eta^{T} \left(  H^T(q-d) - F  \right) \ .
\eeq\end{small}%
It implies the explicit market problem formulation \eqref{eq:market_problem_general} as 
\begin{subequations}\label{eq:edp_quantity_bidding}%
	\begin{eqnarray}
		\label{eq:edp_quantity_bidding.a}
		U_m(q;p)  & := &     0   \\
		\label{eq:edp_quantity_bidding.b}
		G_m(q;p)\le 0  & : &  \eqref{eq:uedp.b}-\eqref{eq:uedp.d}
	\end{eqnarray}	
\end{subequations}
% \begin{subequations}\label{eq:edp_quantity_bidding}%
% 	\begin{eqnarray}
% 		\label{eq:edp_quantity_bidding.a}
% 		\min_{q,\omega,\tilde\theta} &&       \frac{1}{2} \omega^T D \omega   \\
% 		\label{eq:edp_quantity_bidding.b}
% 		\mathrm{s.t.} &&     q=p  \\
% 				\label{eq:edp_quantity_bidding.c}
% 		&& \mathbf 1^T ( q-d )   = 0   \ : \ \lambda  \\
% 		\label{eq:edp_quantity_bidding.d}
% 		&&	 H^T(q-d) \le {F}  \ : \ \eta\ge 0  	\\
% 				\label{eq:edp_quantity_bidding.e}
% 		&& q-d -D\omega -CB\tilde\theta =0  \ : \ \nu 
% 	\end{eqnarray}	
% \end{subequations}

The market problem \eqref{eq:edp_quantity_bidding} is a feasibility problem that coordinates quantity bids of individual generation scheduling through pricing ($\pi = \lambda \cdot \mathbf{1} - H\eta - \omega$).
%It is convenient to verify that $L_m$ can be replaced with $\bar L$ in this case to derive the market control laws.
%where we have plugged in \eqref{eq:uedp.b} to enforce the dispatch $q=p$.
% \beq
% \begin{aligned}
%     & \hat L_1(\tilde \theta, \lambda,\eta,\nu) \\ 
%     : =  \ & \min_{q,\omega:\eqref{eq:edp_quantity_bidding.b}} L_1(q,\omega,\tilde \theta, \lambda,\eta,\nu) \\
%     =  \ &         - \frac{1}{2} \nu^T D\nu  - \lambda \cdot   \mathbf 1^T (p-d) + \eta^{T} \left(  H^T(p-d) - F  \right) \\
% & + \nu^T\left( p-d -CB\tilde\theta \right)  ; 
% \end{aligned}
% \eeq
We need to further select \eqref{eq:market_gradient_general.a} for $( \lambda,\eta)$ and \eqref{eq:market_gradient_general.b} for $(q,\alpha)$ (to yield $\tilde L$ in \eqref{eq:aligned_participant_reduced_Lagrangian_quantity}).
%as the gradient-based market control laws for pricing. 
The resulting formal aligned market dynamics are
\bseq\label{eq:gradient_play_market_quantity_bidding}
\begin{align}
\label{eq:gradient_play_market_quantity_bidding.a}
        	  q \ & \equiv \  p           \\
        	  \label{eq:gradient_play_market_quantity_bidding.b}
    	T^{\lambda} \dot \lambda \ & =\   -    \mathbf 1^T (q-d)   \\
    	 \label{eq:gradient_play_market_quantity_bidding.c}
	T^{\eta}  \dot \eta \ & =\  	 \left[ H^T(q-d)  - {F} \right]^+_{\eta}    
% 	 \label{eq:gradient_play_market_quantity_bidding.d}
% 	  \dot \nu \ &  = \    	\Gamma^{\nu}        \left( p-d - D\nu -CB\tilde\theta \right)         
\end{align}
\eseq
%\eqref{eq:gradient_play_market_quantity_bidding.b}, \eqref{eq:gradient_play_market_quantity_bidding.c} are exactly the corresponding partial saddle flow on $\tilde L$ in \eqref{eq:reduced_Lagrangian_quantity}.

%Note that the swing dynamics \eqref{eq:swingdynamics}, also interpretable as the partial saddle flow in terms of $\hat L$ based on \eqref{eq:swing_dynamics_Lagrangian_interpretation.b}, \eqref{eq:swing_dynamics_Lagrangian_interpretation.c}, have already taken care of updating $\tilde \theta$ and $\omega/\nu$, thus the market only needs to manage \eqref{eq:gradient_play_market_quantity_bidding}. 

Under the quantity bidding mechanism, the grid-market-participant loop, consisting of \eqref{eq:swingdynamics}, \eqref{eq:gradient_play_gen_quantity_bidding}, \eqref{eq:gradient_play_market_quantity_bidding}, equivalently implements the projected saddle flow \eqref{eq:grid-market-participant_loop} of the underlying reduced planner's Lagrangian $\hat L=\tilde L$ in \eqref{eq:aligned_participant_reduced_Lagrangian_quantity} that optimizes $\bar L$ over $(v_m, v_j,j\in\mathcal{N})$, i.e.,
\beq\label{eq:reduced_Lagrangian_quantity}
\hat L(z,\sigma) : = \min_{q}\max_{\alpha} \ \bar L(p,q,\tilde \theta, \alpha,\lambda,\eta,\nu ) \ ,
\eeq
with $z:= (p, \tilde \theta) \in\mathbb{R}^{|\mathcal{N}|+ |\mathcal{E}|}$ and $\sigma : = (\lambda,\eta,\nu)\in \mathbb{R}^{|\mathcal{N}| + 2|\mathcal{E}| + 1}$.
%Note that $\omega$ and $q$ in \eqref{eq:gradient_play_market_quantity_bidding.a} are the reduced variables.
In this setting, the observability in Assumption~\ref{ass:observability} indeed holds as claimed below (proof in Appendix~\ref{apx:proof-of-prop_observability}).
\begin{proposition}\label{prop:observability_quantity}
Given $\hat L(z,\sigma)$ in \eqref{eq:reduced_Lagrangian_quantity}, if any trajectory $(z(t),\sigma(t))$ of the grid-market-participant loop \eqref{eq:grid-market-participant_loop} satisfies $\hat L(z^*,\sigma(t)) \equiv \hat L(z^*,\sigma^*)$ and $\hat L(z(t),\sigma^*) \equiv \hat L(z^*,\sigma^*)$, $\dot z, \dot \sigma \equiv 0$ holds.
\end{proposition}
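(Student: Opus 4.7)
The plan is to exploit the strict convexity/concavity of $\hat L$ in certain variables, combined with saddle-point optimality conditions, to conclude componentwise that each entry of $(\dot z, \dot \sigma)$ vanishes. First, I would write $\hat L$ explicitly by carrying out the inner optimizations in \eqref{eq:reduced_planner's_Lagrangian_bar_L} and \eqref{eq:reduced_Lagrangian_quantity}: minimizing over $\omega$ yields $\omega\equiv\nu$, and the subsequent $\min_q\max_\alpha$ enforces $q\equiv p$, producing
\begin{align*}
\hat L(p,\tilde\theta,\lambda,\eta,\nu) = \; & \mathbf{1}^T J(p) - \tfrac{1}{2}\nu^T D\nu + \nu^T(p-d-CB\tilde\theta) \\
& - \lambda \mathbf{1}^T(p-d) + \eta^T(H^T(p-d) - F).
\end{align*}
Cross-checking against \eqref{eq:gradient_play_gen_quantity_bidding}, \eqref{eq:swing_dynamics_Lagrangian_interpretation} and \eqref{eq:gradient_play_market_quantity_bidding} confirms that the closed loop is exactly the projected saddle flow \eqref{eq:grid-market-participant_loop} on this $\hat L$.

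Second, I would exploit convexity in $z$ and concavity in $\sigma$ separately. Since $(z^*,\sigma^*)$ is a saddle point, $z^*$ minimizes $\hat L(\cdot,\sigma^*)$ and $\sigma^*$ maximizes $\hat L(z^*,\cdot)$, so the hypotheses force $z(t)$ and $\sigma(t)$ to remain minimizers and maximizers of the corresponding partial functions for all $t$. Strict convexity of $\mathbf{1}^T J(\cdot)$ in $p$ (from the assumption on $J_j$) then yields $p(t)\equiv p^*$, hence $\dot p\equiv 0$; and strict concavity of $-\tfrac{1}{2}\nu^T D\nu$ in $\nu$ (since $D\succ 0$) yields $\nu(t)\equiv\nu^*$, hence $\dot\nu\equiv 0$. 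For $\tilde\theta$, the saddle identity $-BC^T\nu^* = \nabla_{\tilde\theta}\hat L(z^*,\sigma^*) = 0$ combined with $\nu(t)\equiv\nu^*$ gives $\dot{\tilde\theta} = \Gamma^{\tilde\theta} BC^T\nu^* \equiv 0$; for $\lambda$, the saddle identity $\mathbf{1}^T(p^*-d)=0$ together with $p(t)\equiv p^*$ gives $\dot\lambda\equiv 0$.

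The main obstacle is $\dot\eta$, because $\hat L$ is only linear in $\eta$ and strict concavity fails. My strategy is to squeeze additional information out of the hypothesis by substituting the already-established identities. Plugging $p(t)\equiv p^*$, $\nu(t)\equiv \nu^*$, and the saddle identities $\mathbf{1}^T(p^*-d)=0$ and ${\eta^*}^T(H^T(p^*-d)-F)=0$ into $\hat L(z^*,\sigma(t))\equiv\hat L(z^*,\sigma^*)$ collapses it to
\begin{equation*}
\eta(t)^T\bigl(H^T(p^*-d)-F\bigr) \;=\; 0.
\end{equation*}
Primal feasibility gives $H^T(p^*-d)-F\le 0$ and the initial condition ensures $\eta(t)\ge 0$, so each summand is non-positive; their sum being zero forces, for every coordinate $k$, either $[H^T(p^*-d)-F]_k=0$ or $\eta_k(t)=0$. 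A short case analysis based on the definition \eqref{eq:def_proj} shows that in either case $\bigl[[H^T(p^*-d)-F]_k\bigr]^+_{\eta_k(t)}=0$, so $\dot\eta\equiv 0$, completing the componentwise verification that $\dot z,\dot\sigma\equiv 0$.
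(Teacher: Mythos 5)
Your proof is correct, but it reaches the conclusion by a genuinely different mechanism than the paper. The paper's proof differentiates the two invariance identities along the trajectory, plugs in the closed-loop dynamics to conclude $\nabla_z \hat L(z(t),\sigma^*) \equiv 0$ (hence $p(t)\equiv p^*$ by monotonicity of $\nabla J$) and the analogous projected condition in $\sigma$, and then chains through the blocks dynamically: $\dot\lambda\equiv 0$, a sign case analysis on $H^T(p^*-d)-F$ for $\dot\eta\equiv 0$ in which the strictly positive case is excluded by boundedness of trajectories in the invariant set, then $\dot\nu\equiv 0$ extracted from the $p$-update, and finally $\nu(t)\equiv\nu^*=0$ by comparison with the KKT conditions to get $\dot{\tilde\theta}\equiv 0$. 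You instead argue statically: from the explicit separable form of $\hat L$ in \eqref{eq:reduced_Lagrangian_quantity}, the level-value hypotheses place $z(t)$ and $\sigma(t)$ in the partial argmin/argmax sets, and strict convexity in $p$ and strict concavity in $\nu$ (with $D\succ 0$) pin $p(t)\equiv p^*$ and $\nu(t)\equiv\nu^*$ without any time differentiation; $\dot{\tilde\theta}$ and $\dot\lambda$ then follow from the equilibrium KKT identities $C^T\nu^*=0$ and $\mathbf{1}^T(p^*-d)=0$, and the $\eta$-block is settled by squeezing $\eta(t)^T\bigl(H^T(p^*-d)-F\bigr)=0$ out of the same identity and combining it with primal feasibility, $\eta(t)\ge 0$, and the definition \eqref{eq:def_proj}. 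What each buys: your route is self-contained — it does not import boundedness of the invariant set from the Lyapunov analysis, and it sidesteps the chain-rule step where the paper evaluates $\dot z$ with $\sigma^*$ in place of $\sigma(t)$ — while the paper's differentiation route does not rely on the separable strictly-convex/concave structure of this particular $\hat L$ and therefore transfers verbatim to the price-bidding case of Proposition~2 (where $\hat L$ is only linear in $q$, so your static argument would need an extra dynamic step, e.g., recovering $q(t)$ from $\dot\alpha\equiv 0$). The facts you invoke about $(z^*,\sigma^*)$ (saddle property of $\hat L$, feasibility, complementary slackness, $C^T\nu^*=0$) are all available from Theorem~\ref{teo:eqm} and the KKT conditions \eqref{eq:UEDP_KKT}, and your implicit use of $\eta(t)\ge 0$ is justified by the initial condition set $\mathbb{I}$, so no gap remains.
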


\subsubsection{Price Bidding}

The price bidding mechanism allows individual generator participants to bid into the market the desired prices $\alpha$ of electricity at which they are willing to sell. Such a price bid is expected to implicitly reflect the marginal generation cost without revealing a generator's truthful cost function. The market targets an economic schedule of generation dispatch $q$ with the corresponding incentive compatible clearing prices $\pi$ based on these price bids.

Given the market dispatch $q_j$ and clearing price $\pi_j$ at bus $j\in\mathcal{N}$, generator $j$ is obliged to follow the designated generation dispatch but still can strive for profit maximization through its price bid:
\bseq\label{eq:gen_price_bidding_problem}
	\begin{eqnarray}
		\label{eq:gen_price_bidding_problem.a}
		U_j(p_j;q_j, \pi_j)  & := &      \pi_j q_j - J_j(p_j)    \\
		\label{eq:gen_price_bidding_problem.b}
		G_j(p_j;q_j, \pi_j) \le 0  & : & \eqref{eq:uedp.b}
	\end{eqnarray}	
\eseq
We interpret the price bid from the dual perspective and define a corresponding Lagrangian $L_j$:
\beq\label{eq:L_j_price}
L_j(p_j,\alpha_j;q_j, \pi_j) \ := \ \pi_j q_j - J_j(p_j) + \alpha_j (p_j - q_j) \ . 
\eeq
% \enrique{So, here we can use instead 
% \[
% \max_{p_j,\alpha_j} \pi_jq_j-J_j(p_j) + \alpha_j(q_j-p_j)
% \]
% The participant wants to minimize its cost, and maximize a certain deviation charge, i.e., $\alpha_j(q_j-p_j)$. 
% }
We adopt \eqref{eq:gen_gradient_general.a} for $\alpha$ and \eqref{eq:gen_gradient_general.b} for $p$ to characterize the bidding strategy of generator $j$ \cite{stegink2017frequency}:
\beq\label{eq:gradient_play_gen_price_bidding}
\tau_j^{\alpha} \dot \alpha_j  \ = \   q_j - ( \nabla J_j)^{-1}(\alpha_j) \ , \quad j\in\mathcal{N} \ , 
\eeq
where we have substituted $p_j = ( \nabla J_j)^{-1}(\alpha_j)$ from \eqref{eq:gen_gradient_general.b} for $p$.
%\enrique{We should mention here that this bidding strategy has been considered before.}
The rational bidding behavior \eqref{eq:gradient_play_gen_price_bidding} reveals generator $j$'s effort to align its desired generation $(\nabla J_j)^{-1}(\alpha_j)$, conveyed through the price bid $\alpha_j$, with the given dispatch $q_j$. For example, an increase in $\alpha_j$ implies raised desired generation $(\nabla J_j)^{-1}(\alpha_j)$ (by convexity), and meanwhile signals the market to decrease the corresponding dispatch $q_j$, thus diminishing the gap in between.
%\enrique{I find that adding notation to the discussion makes it better. Can you please do a similar thing on other discussions that aim to provide intuition.}

It can be readily verified that \eqref{eq:gradient_play_gen_price_bidding} also satisfies Definition~\ref{def:alignment} of alignment with $\tilde L = \bar L$ in \eqref{eq:reduced_planner's_Lagrangian_bar_L}. Therefore, $v_m$ is $\varnothing$, while we have $v_j=p_j$, $j\in\mathcal{N}$, from \eqref{eq:gradient_play_gen_price_bidding}. Based on \eqref{eq:market_alignment}, we aim for $L_m$ such that $\nabla_u L_m = \nabla_u \min_p \bar L$ holds for $u=q,\lambda,\eta$. Note that now $p=q$ is accounted for by individual generators in \eqref{eq:gen_price_bidding_problem.b}. Then, inheriting all the market-related terms (independent of $p$) from the planner's Lagrangian \eqref{eq:base_Lagrangian}, including $\nu^T q$ and $\alpha^T q$ that capture interactions, leads to 
\begin{small}
\beq\label{eq:L_m_price_bidding}
    % &  L_m(q,\omega,\tilde \theta, \lambda,\eta,\nu;\alpha) \\ 
    % : =  \ &   \alpha^T q +     \frac{1}{2} \omega^T D\omega 
		  %- \lambda \cdot   \mathbf 1^T (q-d) + \eta^{T} \left(  H^T(q-d) - F  \right) \\ 
		  %\ &    + \nu^T\left( q-d-D\omega -CB\tilde\theta \right)  
      L_m(q,\lambda,\eta;\alpha,\omega) 
    : =     (\alpha+\omega)^T q   
		  - \lambda \cdot   \mathbf 1^T (q-d) + \eta^{T} \Big(  H^T(q-d) - F  \Big) 
\eeq
\end{small}%
with interchangeable $\omega \leftrightarrow \nu$.
It corresponds to the desired market problem as 
\begin{subequations}\label{eq:edp_price_bidding}%
	\begin{eqnarray}
		\label{eq:edp_price_bidding.a}
		U_m(q;\alpha,\omega)  & := &    (\alpha+\omega)^T q     \\% +   \frac{1}{2} \omega^T D \omega   \\
		\label{eq:edp_price_bidding.b}
		G_m(q;\alpha,\omega)\le 0  & : &  \eqref{eq:uedp.c}-\eqref{eq:uedp.d}%\eqref{eq:uedp.e}
	\end{eqnarray}	
\end{subequations}
% is replaceable with $\hat L$ in this case when used to derive the market control laws. 

The market problem \eqref{eq:edp_price_bidding} uses the price bids and the frequencies, $\alpha+\omega$, as a proxy for the unit generation costs, and minimizes the corresponding aggregate generation cost -- a frequency-aware variant of economic dispatch.\footnote{The linear programming formulation \eqref{eq:edp_price_bidding} may not be well defined (finite) in general without generation capacity constraints. However, it is sufficient to derive control laws that make the closed loop steady-state optimal.}
Since $v_m$ is $\varnothing$, we will just implement \eqref{eq:market_gradient_general.a} for $(q,\lambda,\eta)$, leading to the formal aligned market dynamics:
\bseq\label{eq:gradient_play_market_price_bidding}
\begin{align}
\label{eq:gradient_play_market_price_bidding.a}
T^{q} \dot q \ & = \   \lambda \cdot \mathbf{1} - H \eta - \omega - \alpha  \\
        % 	  \label{eq:gradient_play_market_price_bidding.b}
        % 	  \dot {\tilde \theta} \ & = \  \Gamma^{\tilde \theta} BC^T \nu            \\
        	  \label{eq:gradient_play_market_price_bidding.b}
    	T^{\lambda} \dot \lambda \ & =\   -    \mathbf 1^T (q-d)   \\
    	 \label{eq:gradient_play_market_price_bidding.c}
T^{\eta}	 \dot \eta \ & =\  	 \left[ H^T(q-d)  - {F} \right]^+_{\eta}    
% 	 \label{eq:gradient_play_market_price_bidding.e}
% 	  \dot \nu \ &  = \    	\Gamma^{\nu}        \left( q-d - D\nu -CB\tilde\theta \right)         
\end{align}
\eseq

%Similarly, the swing dynamics \eqref{eq:swingdynamics} have already taken care of updating $\tilde \theta$ and $\omega/\nu$.
%Therefore, only the dispatch dynamics \eqref{eq:gradient_play_market_price_bidding.a} and the pricing dynamics \eqref{eq:gradient_play_market_price_bidding.b}, \eqref{eq:gradient_play_market_price_bidding.c} need to be maintained by the market.

% From above, the aligned price bidding market dynamics are summarized as
% \begin{subequations}
% 		\begin{align}
% 		\label{eq:pricectr:1}
% 		T^{\alpha} \dot \alpha \ & = \ q - (\nabla J)^{-1} (\alpha) \\
% 		\label{eq:pricectr:2}
% 		T^q \dot q  \ & =	\    \lambda \cdot \mathbf{1}  - H \eta -  \omega   - \alpha    \\
% 		\label{eq:pricectr:3}
% 		T^{\lambda} \dot \lambda \ & =\    - \mathbf 1^T (q-d)   \\
% 		\label{eq:pricectr:4}
% 		T^{\eta} \dot \eta \ & =\   \left[ H^T(q-d)  - {F} \right]^+_{\eta}   
% 		\end{align}\label{eq:pricectr}%
% \end{subequations}
% \eqref{eq:pricectr:1} can be viewed as price bidding dynamics of the individual generators while the market manages dispatch dynamics \eqref{eq:pricectr:2} and pricing dynamics \eqref{eq:pricectr:3}, \eqref{eq:pricectr:4}.

Under the price bidding mechanism, the grid-market-participant loop, consisting of \eqref{eq:swingdynamics}, \eqref{eq:gradient_play_gen_price_bidding}, \eqref{eq:gradient_play_market_price_bidding}, equivalently implements the projected saddle flow \eqref{eq:grid-market-participant_loop} of the underlying reduced planner's Lagrangian 
\beq\label{eq:reduced_Lagrangian_price}
\hat L(z,\sigma) : = \min_{p} \ \bar L(p,q,\tilde \theta, \alpha,\lambda,\eta,\nu ) \ ,
\eeq
based on \eqref{eq:reduced_Lagrangian_final}, with $z:= (q, \tilde \theta) \in\mathbb{R}^{|\mathcal{N}|+ |\mathcal{E}|}$ and $\sigma : = (\alpha,\lambda, \eta,\nu)\in \mathbb{R}^{2|\mathcal{N}| + 2|\mathcal{E}| + 1}$.
Here $p\equiv (\nabla J)^{-1} (\alpha)$ is enforced with the column vector-valued inverse function $( \nabla J)^{-1}(\cdot):= ( (\nabla J_j)^{-1}(\cdot),j\in\mathcal{N}) :\mathbb{R}^{|\mathcal{N}|} \rightarrow \mathbb{R}^{|\mathcal{N}|}$ representing the element-wise inverse of gradients.
The observability in Assumption~\ref{ass:observability} likewise holds here as formally stated below (proof analogous to that of Proposition~\ref{prop:observability_quantity}).
\begin{proposition}\label{prop:observability_price}
Given $\hat L(z,\sigma)$ in \eqref{eq:reduced_Lagrangian_price}, if any trajectory $(z(t),\sigma(t))$ of the grid-market-participant loop \eqref{eq:grid-market-participant_loop} satisfies $\hat L(z^*,\sigma(t)) \equiv \hat L(z^*,\sigma^*)$ and $\hat L(z(t),\sigma^*) \equiv \hat L(z^*,\sigma^*)$, $\dot z, \dot \sigma \equiv 0$ holds.
\end{proposition}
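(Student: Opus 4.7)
The plan is to follow the same strategy as Proposition~\ref{prop:observability_quantity}, adapted to the different structure of $\hat L$ in \eqref{eq:reduced_Lagrangian_price}. Performing the reduction $\min_p \bar L$ explicitly, one finds
\begin{align*}
\hat L(z,\sigma) \;=\; & \Phi(\alpha) + \alpha^T q - \tfrac{1}{2}\nu^T D\nu + \nu^T(q-d-CB\tilde\theta) \\
 & {}- \lambda\mathbf{1}^T(q-d) + \eta^T(H^T(q-d)-F),
\end{align*}
with $\Phi(\alpha) := -(\mathbf{1}^T J)^*(\alpha)$ the negated convex conjugate of $\mathbf{1}^T J$. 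Strict convexity of each $J_j$ renders $\Phi$ strictly concave, so $\hat L$ is strictly concave in $\alpha$ and in $\nu$, and merely linear in the remaining variables $(q,\tilde\theta,\lambda,\eta)$.

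The first hypothesis does most of the work. Substituting $z=z^*$ in $\hat L$ and invoking the equilibrium identities $\nu^*=0$, $q^*-d-CB\tilde\theta^*=0$, $\mathbf{1}^T(q^*-d)=0$, and $\eta^{*T}(H^T(q^*-d)-F)=0$ from Theorem~\ref{teo:eqm}, the difference $\hat L(z^*,\sigma(t))-\hat L(z^*,\sigma^*)$ collapses to
\begin{align*}
\bigl[\Phi(\alpha(t))+\alpha(t)^T q^*\bigr] & -\bigl[\Phi(\alpha^*)+\alpha^{*T}q^*\bigr] \\
 & {}- \tfrac{1}{2}\nu(t)^T D\nu(t) + \eta(t)^T(H^T(q^*-d)-F).
\end{align*}
Each of the three grouped terms is non-positive: the bracketed difference by strict concavity of $\alpha\mapsto\Phi(\alpha)+\alpha^T q^*$, whose unique maximizer is $\alpha^*$ since $\nabla\Phi(\alpha^*)=-(\nabla J)^{-1}(\alpha^*)=-q^*$; the second by $D\succ 0$; and the third by $\eta(t)\ge 0$ together with $H^T(q^*-d)\le F$. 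Their sum vanishing identically thus forces $\alpha(t)\equiv\alpha^*$, $\nu(t)\equiv 0$, and the pointwise complementary-slackness relation $\eta_j(t)\,(H^T(q^*-d)_j-F_j)=0$. The second hypothesis contributes nothing in this setting: the coefficient $\alpha^*+\nu^*-\lambda^*\mathbf{1}+H\eta^*$ of $q$ in $\hat L(z,\sigma^*)$ vanishes by the saddle-point condition for $\hat L$, while the coefficient of $\tilde\theta$ is annihilated by $\nu^*=0$, so $\hat L(z,\sigma^*)$ is already constant in $z$.

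The remaining step is to propagate these equalities to $\dot z,\dot\sigma\equiv 0$ via the closed-loop dynamics. From $\alpha\equiv\alpha^*$ and the bidding law \eqref{eq:gradient_play_gen_price_bidding} we immediately read off $q\equiv(\nabla J)^{-1}(\alpha^*)=q^*$, whence $\dot q\equiv 0$; and $\omega\equiv\nu\equiv 0$ gives $\dot{\tilde\theta}=C^T\omega\equiv 0$ and $\dot\lambda=-\gamma^\lambda\mathbf{1}^T(q^*-d)\equiv 0$, while $\dot\nu\equiv 0$ follows trivially from $\nu\equiv 0$. For $\dot\eta$, a brief case analysis on the projection $[\,\cdot\,]^+_{\eta_j}$ closes the argument: if $\eta_j(t)=0$ then the projection returns zero since $H^T(q^*-d)_j\le F_j$; if $\eta_j(t)>0$ then the complementary slackness derived above forces $H^T(q^*-d)_j=F_j$, so the unprojected argument itself is zero.

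The main obstacle is that $\hat L$ lacks strict convexity in $z=(q,\tilde\theta)$ and lacks strict concavity in the multipliers $(\lambda,\eta)$, so neither hypothesis alone pins down these variables. The proof sidesteps this by combining the strict concavity of $\hat L$ in $(\alpha,\nu)$ with the price-bidding update \eqref{eq:gradient_play_gen_price_bidding}, which uniquely recovers $q$ from $\alpha$; once $q\equiv q^*$ is in hand, the projection case analysis for $\dot\eta$ remains the only delicate step.
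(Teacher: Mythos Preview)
Your proof is correct and takes a route that differs from --- and in one respect improves upon --- the time-differentiation template the paper invokes by analogy with Proposition~\ref{prop:observability_quantity}. The paper's template would differentiate each invariance hypothesis in $t$, substitute the closed-loop dynamics for $\dot z$ and $\dot\sigma$, and conclude that the relevant partial gradients of $\hat L$ vanish; strict concavity in $\nu$ and monotonicity of $(\nabla J)^{-1}$ then pin down $\nu$ and $\alpha$, after which the projection case analysis for $\eta$ finishes the argument. You instead exploit the saddle property directly: since $\sigma\mapsto\hat L(z^*,\sigma)$ is maximized at $\sigma^*$, the first hypothesis forces $\sigma(t)$ into the argmax, and your explicit decomposition into three non-positive pieces isolates the strictly concave components $(\alpha,\nu)$ without ever differentiating in time. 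Your observation that the second hypothesis is vacuous here --- because $\hat L(\cdot,\sigma^*)$ is affine in $z$ with linear part killed by the stationarity condition $\alpha^*+\nu^*-\lambda^*\mathbf 1+H\eta^*=0$ and by $\nu^*=0$ --- is a genuine structural contrast with the quantity-bidding case, where that hypothesis carried the burden of fixing $p$. The trade-off: the paper's template is uniform across both bidding mechanisms, whereas your argument makes the role of strict concavity in $\alpha$ (via the conjugate $\Phi$) transparent and sidesteps the delicate matching of $\dot\sigma$ at $(z(t),\sigma(t))$ against gradients evaluated at $(z^*,\sigma(t))$.
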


\section{Misaligned Market Dynamics} \label{sec:misaligned_MD}

In general, individual participants are not obliged to conform with any behavior pattern aligned with the market. 
%\dennice{what is that referring to here? The context here is missing} 
If they do not bid in an aligned manner, the role of market dynamics has to be reevaluated.
In this section, we propose an exemplar of rational price bidding strategy for price-taking participants that is misaligned. 
To maintain the desirable properties of the market dynamics, a modification on the market control laws is necessary to accommodate the misalignment.

\subsection{Misaligned Price Bidding}

We still consider the dynamic price bidding mechanism where each generator $j$ interacts with the market through its price bid $\alpha_j$.
%\dennice{Either say if we assume... or refer to something specific}. 
From a generator's perspective, given the market dispatch $q_j$ and clearing price $\pi_j$, one  alternative to fully exploit the market information is to check whether the pair $(q_j,\pi_j)$ satisfies incentive compatibility, i.e., whether its marginal generation cost matches the clearing price, $\nabla J_j(q_j)=\pi_j$.
% \begin{definition}
% A pair of dispatch and LMP $(q_j,\pi_j)$ is incentive compatible if 
% \beq\label{eq:incentive_compatible}
% q_j=( \nabla J_j)^{-1}(\pi_j)
% \eeq
% holds.
% \end{definition}
Note that 
\beq
( \nabla J_j)^{-1}(\pi_j) = \arg\max_{p_j} \ \pi_j p_j - J_j(p_j)
\eeq
is indeed the individual desired output that maximizes generator $j$'s profit given the clearing price $\pi_j$. 
If this output is matched by the market dispatch $q_j$, the generator should be satisfied with its current clearing price and dispatch. 
%\dennice{be specific, you mean with the market outcome, what is the status here?}. 
Otherwise, any discrepancy  between the individual desired output and the market dispatch would drive generator $j$ to strive for compatible incentives. 

This idea inspires a new formulation of the individual price bidding problem:
\bseq\label{eq:gen_price_bidding_problem_alternative}
\bq
	\label{eq:gen_price_bidding_problem_alternative.a}
		U_j(p_j;q_j, \pi_j)  & := &     0   \\
		\label{eq:gen_price_bidding_problem_alternative.b}
		G_j(p_j;q_j, \pi_j) \le 0  & : & \left\{
		\begin{aligned}
		    & \eqref{eq:uedp.b} \\
		    &  p_j =( \nabla J_j)^{-1}(\pi_j) 
		\end{aligned}\right.
\eq
\eseq
where \eqref{eq:gen_price_bidding_problem_alternative.b} still enforces the market dispatch $q_j$ to be strictly followed while generator $j$ requires it to be incentive compatible with the clearing price $\pi_j$.
This is a feasibility problem and we still interpret the price bidding from the dual perspective. We define $L_j$ to be a partial Lagrangian of \eqref{eq:gen_price_bidding_problem_alternative} that only relaxes \eqref{eq:uedp.b}:
\beq
L_j(\alpha_j;q_j,\pi_j) \ := \ \alpha_j\left(( \nabla J_j)^{-1}(\pi_j)  - q_j \right) \ ,
\eeq
where we have plugged in $p_j =( \nabla J_j)^{-1}(\pi_j) $ to indicate the individual desired output. The dynamic gradient play \eqref{eq:gen_gradient_general.a} on $\alpha$ defines an alternative price bidding strategy for generator $j$:
\beq\label{eq:gradient_play_gen_price_bidding_alternative}
\tau^{\alpha}_j \dot \alpha_j \ = \   q_j -( \nabla J_j)^{-1}(\pi_j)   \ , \quad j\in\mathcal{N} \ .
\eeq

Compared with the previous aligned bidding behavior \eqref{eq:gradient_play_gen_price_bidding}, the clearing price information is exploited here instead of each local bid. To some extent, the current strategy \eqref{eq:gradient_play_gen_price_bidding_alternative} is even more straightforward and compelling for rational individual generators since it directly reflects their economic incentives. For instance, if a generator is dispatched more than its desired generation output, it raises its price bid to indicate more costly production, in anticipation of a reduced dispatch or a lifted clearing price, at which it is willing to output more. 
The price bid $\alpha_j$ will remain fixed only when the pair of the market dispatch $q_j$ and clearing price $\pi_j$ satisfies $\nabla J_j(q_j) = \pi_j$ to be incentive compatible. 
Note that a generator bidding according to \eqref{eq:gradient_play_gen_price_bidding_alternative} 
%\dennice{again I would be specific and perhaps mention this up front}
is still a price taker since it just responds to the given dispatch and clearing price.

Suppose the market still maintains the market control laws \eqref{eq:gradient_play_market_price_bidding}, attained from the market problem \eqref{eq:edp_price_bidding} under the price bidding mechanism. 
The alignment condition does not hold here for such a grid-market-participant loop since there does not exist any reduced planner's Lagrangian that satisfies Definition~\ref{def:alignment}. In fact, the insertion of $p = (\nabla J)^{-1}(\pi)$ into the planner's Lagrangian \eqref{eq:base_Lagrangian} could deprive it of the concavity in the dual variables $(\alpha,\lambda,\eta, \nu)$.
The next illustrative example further suggests that such misalignment can lead to system instability.

\noindent
\textbf{Single-bus example}

\textit{
We adopt a quadratic form for $J_j(\cdot)$ as
\beq\label{eq:quadratic_gen_function}
    J_j(q_j) :=    \frac{c_j}{2} q_j^2  + \bar c_j q_j \ ,
\eeq
which is parametrized by constants $c_j >0$ and $\bar c_j $.
Consider the following illustrative single-bus example. 
We can ignore the state variables of $\theta, \eta$ as a result and further drop all the subscripts.
Suppose there is a step change in power demand $d$. The coupled system is given by
\bseq
\begin{align}
    \dot \omega & \  = \  q -d - D\omega  \\
     \dot \alpha & \ = \ q - c^{-1} (\lambda - \omega - \bar c) \\
    \dot q & \ = \ \lambda - \omega - \alpha \\
    \dot \lambda & \ = \  - (q -d) 
\end{align}
\eseq
where the inertia and all the updating step sizes are set to $1$. For computational convenience, we further look at the case with $D=1$ and $c=1$, and write the coupled system in a compact form as
\beq\label{eq:toy_example}
\begin{bmatrix}
\dot \omega \\ \dot \alpha \\ \dot q \\ \dot \lambda
\end{bmatrix} = 
\underbrace{
\begin{bmatrix}
-1 & 0 & 1 & 0  \\
1 & 0 & 1 & -1 \\
-1 & -1 & 0 & 1 \\
0 & 0 & -1 & 0
\end{bmatrix}}_{=:A}
\begin{bmatrix}
\omega \\  \alpha \\  q \\  \lambda
\end{bmatrix} +
\underbrace{
\begin{bmatrix}
-d \\ \bar c \\  0 \\ d
\end{bmatrix}}_{\textrm{constant}} \  .
\eeq
It can be checked that the matrix $A$ has a pair of complex-conjugate eigenvalues $0.16 \pm \mathbf{i} 1.75 $ with positive real parts. Thus, the system \eqref{eq:toy_example} is not stable, as also illustrated in Fig.~\ref{fig:toy}.}

\begin{figure}[t!]
\centering
\includegraphics[width=\columnwidth]{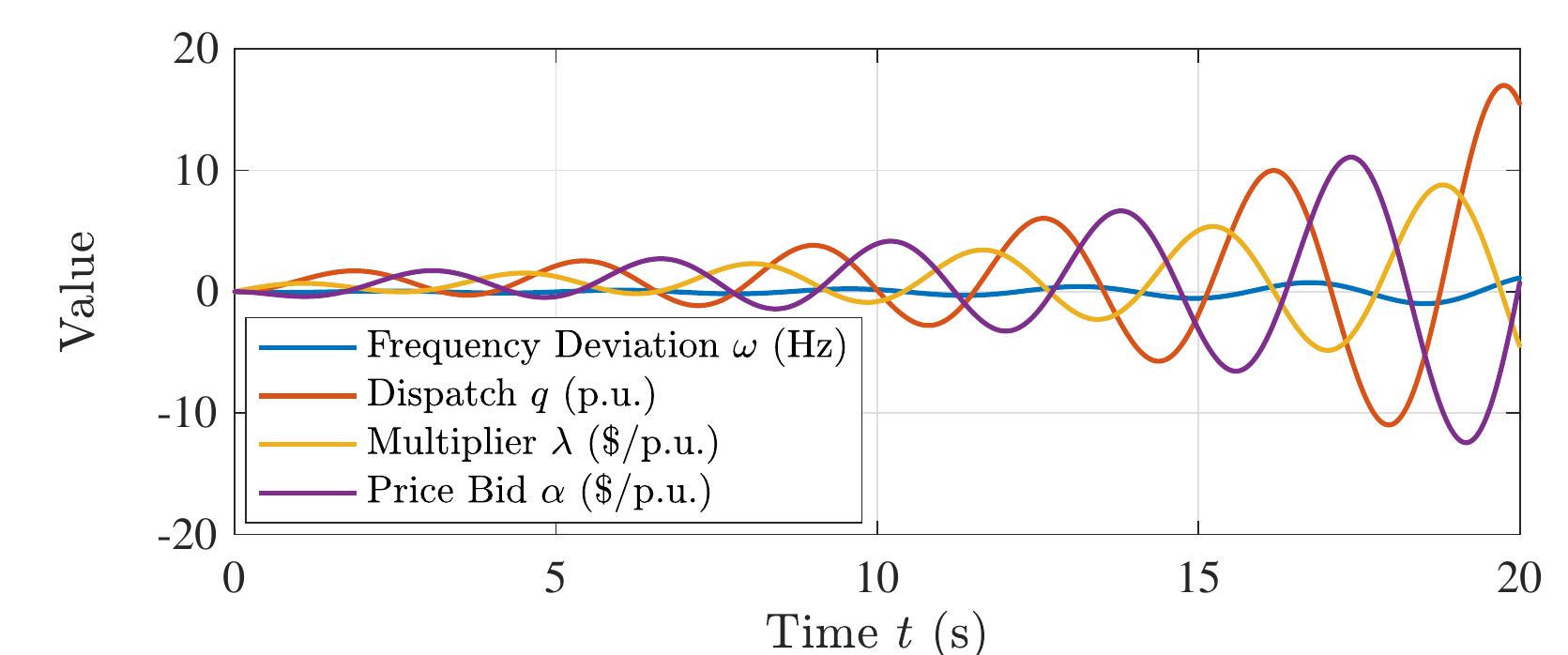}
\caption{Simulation run of illustrative example \eqref{eq:toy_example}.}
\label{fig:toy}
\end{figure}

The example indicates that individual participants being rational price-takers is not sufficient
%\dennice{Why does this show it is nontrivial, to me it only shows that it does not happen} 
to guarantee alignment between participants and the grid planner. Further, such misalignment can render the design of saddle-flow based market dynamics \eqref{eq:gradient_play_market_price_bidding} closed-loop unstable.
% such that the desirable closed-loop steady-state and stability performance is self-evident from saddle flow dynamics.
% Indeed, there does not exist a convex-concave function that can be reduced from the base Lagrangian \eqref{eq:base_Lagrangian} such that the current closed-loop system \eqref{eq:swingdynamics}, \eqref{eq:pricectr:2}-\eqref{eq:pricectr:4}, \eqref{eq:gradient_play_gen_price_bidding_alternative} maps to its saddle flow dynamics. 
The source of the misalignment in this case can be understood as having participants' bidding dynamics based on saddle flows of different reduced Lagrangians.

More precisely, for the closed-loop system \eqref{eq:swingdynamics}, \eqref{eq:gradient_play_market_price_bidding}, \eqref{eq:gradient_play_gen_price_bidding_alternative}, one can identify that individual participants choose the desired generation outputs $p^{\ddagger}$ from the following reduced planner's Lagrangian:
\begin{align*}
    p^{\ddagger}:= & \arg\min_{p} \left\{ \max_{\alpha}  \min_{q}  \  \bar L(p,q , \tilde \theta, \alpha, \lambda, \eta,\nu) \right\}
     \\
    =& (\nabla J)^{-1}(\lambda\cdot \mathbf 1 - H\eta -\nu)  \ .
\end{align*}
% i.e., the desired individual generations given the clearing prices $\pi$; 
However, all the remaining decisions are taken from another reduced planner's Lagrangian (with $p=p^{\ddagger}$):
\beq\label{eq:saddle_flow_variant}
\begin{aligned}
    T^{z} \dot z & = - \nabla_{z} \bar L(p,q , \tilde \theta, \alpha, \lambda, \eta,\nu) \Big\vert_{p=p^{\ddagger}}\\ 
    T^{\sigma} \dot \sigma & = \left[  \nabla_{\sigma} \bar L(p,q , \tilde \theta, \alpha, \lambda, \eta,\nu) \Big\vert_{p=p^{\ddagger}}\right]^+_{\eta}
\end{aligned}
\eeq
with $z=(q,\tilde \theta)$ and $\sigma= (\lambda,\nu,\alpha, \eta)$. This variant \eqref{eq:saddle_flow_variant} of saddle flow dynamics matches exactly the current closed-loop system
\eqref{eq:swingdynamics}, \eqref{eq:gradient_play_market_price_bidding}, \eqref{eq:gradient_play_gen_price_bidding_alternative}. Obviously, participants' bidding behavior is not aligned since they adopt two different reduced Lagrangians to update their variables $p$ and $\alpha$.
We can also expect the malfunction of \eqref{eq:saddle_flow_variant}, due to the improper enforcement of a minimizer from a different reduced Lagrangian.

% of individual generators and recover the role of market dynamics in achieving all the cross-timescale goals. 

\subsection{Market Modification}

In this subsection we propose a solution that modifies the market control laws \eqref{eq:gradient_play_market_price_bidding} to accommodate such misaligned bidding behavior \eqref{eq:gradient_play_gen_price_bidding_alternative}. To simplify the analysis and presentation, we still use the quadratic generation cost in \eqref{eq:quadratic_gen_function}.
We introduce an auxiliary primal variable $\hat q  \in\mathbb{R}^{\vert\mathcal{N} \vert}$ and reformulate the market problem with an extra regularization term in the objective function to minimize:
\begin{small}
\begin{subequations}\label{eq:edp_reg_price_bidding}
	\begin{eqnarray}
		\label{eq:edp_reg_price_bidding.a}
		U_m(q,\hat q;\alpha,\omega) \!\!\!\! & := &  \!\!\!\!   (\alpha+\omega)^T q  
	  + \frac{\rho}{2}  \big\Vert q-\hat q \big \Vert^2     \\
		\label{eq:edp_reg_price_bidding.b}
		G_m(q,\hat q;\alpha,\omega)\le 0  \!\!\!\! & : & \!\!\!\!  \eqref{eq:uedp.c}-\eqref{eq:uedp.d}
	\end{eqnarray}	
\end{subequations}\end{small}%
where $\rho>0$ is a constant regularization coefficient.
Due to the positive regularization, the minimum of \eqref{eq:edp_reg_price_bidding} is lower bounded by that of \eqref{eq:edp_price_bidding} and the bound is tight only when $q^*=\hat q^*$ holds. Therefore, $(q^*,\hat q^*= q^*,  \lambda^*,\eta^*)$ is optimal w.r.t. \eqref{eq:edp_reg_price_bidding} if and only if $(q^*, \lambda^*,\eta^*)$ is optimal w.r.t. \eqref{eq:edp_price_bidding}. 
In this sense, we refer to $\hat q$ as \emph{virtual dispatch} since it is consistent with real dispatch $q$ at optimality.

We still follow the general idea of gradient-based market control laws to define the Lagrangian $L_m$ of \eqref{eq:edp_reg_price_bidding}:
\begin{small}
\beq%\label{eq:reg_reduced_Lagrangian}
\begin{aligned}
     L_m(q,\hat q, \lambda,\eta;\alpha,\omega)  
    := & \   (\alpha+\omega)^T q +     \frac{\rho}{2}  \big\Vert q-\hat q \big \Vert^2 \\
		&  - \lambda \cdot   \mathbf 1^T (q-d)   
		    + \eta^{T} \left(  H^T(q-d) - F  \right)   \  ,
		  %\\
    %   = \ &  \alpha^T \hat q -    \frac{1}{2} \nu^T D\nu  - \frac{1}{2\rho} \big \Vert \lambda \cdot \mathbf{1} -H\eta -\nu -\alpha \big \Vert^2 
		  % - \lambda \cdot   \mathbf 1^T (\hat q-d) \\  
		  %\ &  + \eta^{T} \left(  H^T(\hat q-d) - F  \right) 
		  %   + \nu^T\left(\hat  q-d -CB\tilde\theta \right) ,
\end{aligned}
\eeq\end{small}%
and select \eqref{eq:market_gradient_general.a} for $(\hat q, \lambda, \eta)$ and \eqref{eq:market_gradient_general.b} for $q$, which yields the modified market dynamics
\begin{small}
\bseq\label{eq:regularized_market_graident_play}
\begin{align}
q \ & \equiv \   \frac{1}{\rho} ( \lambda\cdot \mathbf{1} - H\eta -\omega - \alpha)  +\hat q    
\label{eq:regularized_market_graident_play.a}\\ 
T^{\hat q } \dot {\hat q} \ &= \     \lambda\cdot \mathbf{1} - H\eta -\omega - \alpha   \label{eq:regularized_market_graident_play.b}\\
%  \dot {\tilde \theta} \ & = \  \Gamma^{\tilde \theta} BC^T\nu 
%  \label{eq:regularized_market_graident_play.b}\\
T^{\lambda} \dot \lambda \ &= \  -    \mathbf 1^T \left(   \frac{1}{\rho}  (\lambda \cdot \mathbf{1} - H\eta -  \omega  - \alpha )   + \hat q -   d   \right)  \label{eq:regularized_market_graident_play.c}\\
T^{\eta} \dot \eta \ &= \ \left[ H^T\left(    \frac{1}{\rho}  (\lambda \cdot \mathbf{1} - H\eta -  \omega  -\alpha ) +\hat q -  d \right)- {F}  \right]^+_{\eta}   \label{eq:regularized_market_graident_play.d} 
%  \dot \nu \ & = \ \Gamma^{\nu} \left( \frac{1}{\rho} (\lambda \cdot \mathbf{1} - H\eta -  \nu -\alpha  )  +\hat q - d  - D \nu  -CB\tilde \theta \right)
%  \label{eq:regularized_market_graident_play.e}
\end{align}
\eseq\end{small}%
%The swing dynamics \eqref{eq:swingdynamics} also automatically implement the updating rule \eqref{eq:market_gradient_general.a} for $\tilde \theta$ and $\omega/\nu$.
The virtual dispatch $\hat q$ is internal to the market, while the real dispatch $q$ is released to individual participants for implementation along with prices $\pi=\lambda \cdot \mathbf{1} - H\eta -  \omega$.

\subsection{Equilibrium Analysis and Asymptotic Stability}
%\dennice{this first sentence totally confuses me, the word confront is strange but it seems like there is a lead in or some specifics missing. Check my suggested revision}
In this subsection we examine the interaction among the physical power network dynamics \eqref{eq:swingdynamics}, the modified market control laws \eqref{eq:regularized_market_graident_play} and the rational bidding behavior \eqref{eq:gradient_play_gen_price_bidding_alternative} of individual participants. 
We will formally characterize the steady state and stability of this new closed-loop system.

We first define $z:=(\hat q, \tilde \theta)\in \mathbb{R}^{|\mathcal{N}| + |\mathcal{E}| }$ and $\sigma : = (\lambda,\omega,\alpha,\eta) \in\mathbb{R}^{2|\mathcal{N}|+ 2|\mathcal{E}|+1}$, and note that $q$ in \eqref{eq:regularized_market_graident_play.a} is the reduced variable.
We retain the sets of initial points and equilibrium points defined in \eqref{eq:initial_set} and \eqref{eq:equilibrium_set}, respectively. The corresponding equilibrium set is then explicitly characterized as follows (proof in Appendix~\ref{apx:proof_of_teo_eqm_reg}).
\begin{theorem}\label{teo:eqm_reg}
	For the grid-market-participant loop \eqref{eq:swingdynamics}, \eqref{eq:gradient_play_gen_price_bidding_alternative}, \eqref{eq:regularized_market_graident_play} that starts from any initial point in $\mathbb{I}$, a point $(z^*,\sigma^*)$ is an equilibrium if and only if $(z^*,\sigma^*)$ corresponds to an optimal primal-dual solution to the planner's problem \eqref{eq:uedp}.
\end{theorem}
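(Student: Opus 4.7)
The plan is to prove Theorem~\ref{teo:eqm_reg} by showing that the equilibrium conditions of the closed-loop system \eqref{eq:swingdynamics}, \eqref{eq:gradient_play_gen_price_bidding_alternative}, \eqref{eq:regularized_market_graident_play} are equivalent to the KKT conditions of the planner's problem \eqref{eq:uedp}, much in the spirit of Theorem~\ref{teo:eqm}. I would first collect the stationarity conditions obtained by setting every time derivative to zero: from the swing dynamics \eqref{eq:swingdynamics} I get $C^T\omega^*=0$ and $q^*-d-D\omega^*-CB\tilde\theta^*=0$; from the rational price bidding \eqref{eq:gradient_play_gen_price_bidding_alternative} I obtain the incentive-compatibility identity $\nabla J(q^*)=\lambda^*\mathbf{1}-H\eta^*-\omega^*$; from the $\hat q$-equation \eqref{eq:regularized_market_graident_play.b} I obtain $\alpha^*=\lambda^*\mathbf{1}-H\eta^*-\omega^*$, which combined with \eqref{eq:regularized_market_graident_play.a} immediately forces $q^*=\hat q^*$ (so the regularization gap vanishes at the equilibrium); finally from \eqref{eq:regularized_market_graident_play.c}--\eqref{eq:regularized_market_graident_play.d} I obtain $\mathbf{1}^T(q^*-d)=0$ together with the projected complementarity $[H^T(q^*-d)-F]^+_{\eta^*}=0$ and $\eta^*\ge 0$.

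For the forward direction I would then argue $\omega^*=0$ by a summation argument: connectedness of the graph and $C^T\omega^*=0$ force $\omega^*=\omega_0\mathbf{1}$ for some scalar $\omega_0$, and left-multiplying $q^*-d-D\omega^*-CB\tilde\theta^*=0$ by $\mathbf{1}^T$ and using $\mathbf{1}^T C=0$ together with $\mathbf{1}^T(q^*-d)=0$ gives $\omega_0\,\mathbf{1}^T D\mathbf{1}=0$, whence $\omega^*=0$ (since $D\succ 0$). This restores the secondary-control equality and reduces the swing equality to the nodal power balance $q^*-d=CB\tilde\theta^*$, so the $\nu^*:=\omega^*=0$ pairing that the planner's Lagrangian \eqref{eq:base_Lagrangian} expects is exactly obtained. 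The remaining planner KKT conditions follow directly: stationarity in $p$ of \eqref{eq:base_Lagrangian} reads $\nabla J(p^*)=\alpha^*$, which is satisfied because $\alpha^*=\lambda^*\mathbf{1}-H\eta^*-\omega^*=\nabla J(q^*)$ (so defining $p^*:=q^*$ meets \eqref{eq:uedp.b}); primal feasibility of \eqref{eq:uedp.c}--\eqref{eq:uedp.e} has just been established; dual feasibility $\eta^*\ge 0$ and complementary slackness for \eqref{eq:uedp.d} are delivered by the projection $[\,\cdot\,]^+_{\eta^*}=0$ (using \eqref{eq:nonexpansive} in the standard way).

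For the reverse direction, given an optimal primal-dual solution $(p^*,q^*,\omega^*,\tilde\theta^*,\alpha^*,\lambda^*,\eta^*,\nu^*)$ of \eqref{eq:uedp}, Theorem~\ref{teo:planner} gives $p^*=q^*$, $\omega^*=0$, $\tilde\theta^*=C^T L^\dagger(q^*-d)$, and $\nu^*=0$. Setting $\hat q^*:=q^*$ and using $\omega^*=0$ equivalently as $\nu^*=0$, I verify that all the right-hand sides of \eqref{eq:swingdynamics}, \eqref{eq:gradient_play_gen_price_bidding_alternative} and \eqref{eq:regularized_market_graident_play} vanish: the KKT stationarity conditions of \eqref{eq:uedp} yield $\nabla J(q^*)=\lambda^*\mathbf{1}-H\eta^*-\omega^*$ (so $\dot\alpha=0$) and $\alpha^*=\nabla J(q^*)$ (so $\lambda^*\mathbf{1}-H\eta^*-\omega^*-\alpha^*=0$, killing both \eqref{eq:regularized_market_graident_play.a}'s regularization term and \eqref{eq:regularized_market_graident_play.b}); primal feasibility kills the swing residual and \eqref{eq:regularized_market_graident_play.c}; and complementary slackness with $\eta^*\ge 0$ kills \eqref{eq:regularized_market_graident_play.d} via the definition \eqref{eq:def_proj}.

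The main obstacle is the algebraic tangle introduced by the regularization: the variable $q$ is reduced via \eqref{eq:regularized_market_graident_play.a}, so the equilibrium equations for $\lambda$ and $\eta$ involve both $\hat q$ and the price residual $\lambda\mathbf{1}-H\eta-\omega-\alpha$. The key observation that unlocks the proof is that at equilibrium the $\hat q$-equation forces that residual to be zero, which simultaneously (i) collapses $q=\hat q$, and (ii) makes the misaligned bidding condition $q=(\nabla J)^{-1}(\pi)$ coincide with the aligned stationarity $\alpha=\nabla J(p)$ of the planner's Lagrangian. Once this collapse is noticed, the remainder is a routine matching with the KKT system of \eqref{eq:uedp}, and the projection argument is identical to that used in Appendix~\ref{apx:proof_of_teo_eqm}.
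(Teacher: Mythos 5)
Your proposal is correct and follows essentially the same route as the paper's proof: set all derivatives to zero, use the $\hat q$-equation to force the price residual $\lambda\mathbf{1}-H\eta-\omega-\alpha$ to vanish (hence $q^*=\hat q^*$ and the collapse of the misaligned bidding condition onto the planner's stationarity), apply the summation argument with $\mathbf{1}^TC=0$ and $D\succ 0$ to get $\omega^*=\nu^*=0$, and match the remaining conditions, including the projection handling of $\eta$, with the KKT system of \eqref{eq:uedp} exactly as in Appendix~\ref{apx:proof_of_teo_eqm}. The only cosmetic slip is invoking \eqref{eq:nonexpansive} for complementary slackness, which is not needed there (the case analysis on the projection \eqref{eq:def_proj} suffices, as you also note).
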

%\enrique{Why do we care about the initial condition here? this theorem is about equilibrium.}
% \begin{proof}
% See Appendix~\ref{apx:proof_of_teo_eqm_reg}.
% \end{proof}

% \begin{remark}
% In Theorem~\ref{teo:eqm_reg}, a given $(z^*,\sigma^*)$ implies a complete optimal primal-dual solution to the planner's problem \eqref{eq:uedp}, due to the recovery of the reduced variables, e.g., $p^*= \left(\nabla J \right)^{-1}( \lambda^*\cdot \mathbf{1} - H\eta^* -\omega^*)$, $q^*= \frac{1}{\rho} ( \lambda^*\cdot \mathbf{1} - H\eta^* -\omega^* - \alpha^*)+ \hat q^*$, and $\omega^* = \nu^*$.
% \end{remark}

Having secured the desirable steady state, we next show that given the initial condition of $\mathbb{I}$, the grid-market-participant loop \eqref{eq:swingdynamics}, \eqref{eq:gradient_play_gen_price_bidding_alternative}, \eqref{eq:regularized_market_graident_play} indeed converges to one equilibrium point in $\mathbb{E}$ with proper choice of the coefficient $\rho$, as summarized below (proof in Appendix~\ref{apx:proof_of_teo_stability_reg}).

%\dennice{I think having more precisely in a theorem is strange I would state the precise theorem and then say that this means that}
\begin{theorem}\label{teo:stability_reg}
The equilibrium set $\mathbb{E}$ is globally asymptotically stable on $\mathbb{I}$, given $\rho\in \left(0,\inf_{j\in\mathcal{N}}   4c_j \right)$. In particular, starting from any initial point in $\mathbb{I}$, a trajectory $(z(t),\sigma(t))$ of the grid-market-participant loop \eqref{eq:swingdynamics}, \eqref{eq:gradient_play_gen_price_bidding_alternative}, \eqref{eq:regularized_market_graident_play} remains bounded for $t\ge 0$ and converges to $(z^*,\sigma^*)$ with $t\rightarrow\infty$, where $(z^*,\sigma^*)$ is one specific equilibrium point in $\mathbb{E}$.
\end{theorem}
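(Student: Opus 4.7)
\textbf{Proof plan for Theorem~\ref{teo:stability_reg}.} My approach is a Lyapunov-plus-LaSalle argument, analogous in spirit to the one behind Theorem~\ref{teo:stability}, but with a key twist: because the bidding \eqref{eq:gradient_play_gen_price_bidding_alternative} is misaligned, the closed loop is no longer a pure saddle flow of a single reduced Lagrangian, so the usual convex-concave cancellations will leave residual cross terms that must be absorbed by the regularization of the market. The role of the bound $\rho \in (0, \inf_j 4c_j)$ is precisely to guarantee such absorption.

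The plan is to fix an equilibrium $(z^*,\sigma^*)\in\mathbb{E}$ (whose existence and characterization follow from Theorem~\ref{teo:eqm_reg}) and consider the quadratic storage function
\begin{equation*}
V(z,\sigma) \;=\; \tfrac{1}{2}(z-z^*)^T T^z (z-z^*) \;+\; \tfrac{1}{2}(\sigma-\sigma^*)^T T^\sigma (\sigma-\sigma^*),
\end{equation*}
which is positive definite in the deviation and radially unbounded. First I would substitute the dynamics \eqref{eq:swingdynamics}, \eqref{eq:gradient_play_gen_price_bidding_alternative}, \eqref{eq:regularized_market_graident_play} into $\dot V$, using the equilibrium relations from Theorem~\ref{teo:eqm_reg} to rewrite each right-hand side as a difference from its value at $(z^*,\sigma^*)$. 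The projection on $\eta$ is handled via the non-expansive inequality \eqref{eq:nonexpansive}, which lets me replace the projected gradient by the unprojected one at the cost of an inequality in the right direction. The swing-dynamics block contributes the usual $-(\omega-\omega^*)^T D (\omega-\omega^*)$ damping term and a telescoping interconnection with $\tilde\theta$ and $\nu$, which cancels as in the aligned case.

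Next I would isolate the terms produced by the misalignment. Substituting $q = \rho^{-1}(\lambda\mathbf 1 - H\eta - \omega - \alpha) + \hat q$ from \eqref{eq:regularized_market_graident_play.a} into the other equations, the bilinear pairings between primal and dual deviations cancel exactly as in a saddle flow, except for the contribution from the generator update, which produces a quadratic form in $(\alpha-\alpha^*)$ and $(q-q^*)$ of the form $-(\alpha-\alpha^*)^T (q-q^*) - (\alpha-\alpha^*)^T C^{-1}(\alpha-\alpha^*)$ with $C=\diag(c_j)$, together with a cross term $(\alpha-\alpha^*)^T C^{-1}(\lambda\mathbf 1 - H\eta - \omega - \alpha - (\cdot)^*)$ coming from substituting $\pi = \lambda\mathbf 1 - H\eta - \omega$ into $(\nabla J)^{-1}(\pi)$. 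At the same time, the regularization in \eqref{eq:regularized_market_graident_play} contributes a negative-definite damping $-\rho\Vert (q-\hat q) - (q^*-\hat q^*)\Vert^2 = -\rho\Vert q - \hat q\Vert^2$ (using $q^*=\hat q^*$ from Theorem~\ref{teo:eqm_reg}), which by \eqref{eq:regularized_market_graident_play.a} is exactly $-\rho^{-1}\Vert \lambda\mathbf 1 - H\eta - \omega - \alpha - (\cdot)^*\Vert^2$.

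The main obstacle, and the heart of the proof, is to show that the net quadratic form in the deviations $(\alpha-\alpha^*)$ and $\Delta\pi := (\lambda\mathbf 1 - H\eta - \omega - \alpha) - (\lambda^*\mathbf 1 - H\eta^* - \omega^* - \alpha^*)$ is negative semidefinite. Schematically, after collecting terms one is left with a per-bus quadratic
\begin{equation*}
-\,c_j^{-1}(\alpha_j-\alpha_j^*)^2 \;+\; c_j^{-1}(\alpha_j-\alpha_j^*)\,\Delta\pi_j \;-\; \rho^{-1}\Delta\pi_j^{\,2},
\end{equation*}
whose associated $2\times 2$ matrix is negative semidefinite exactly when $\rho \le 4 c_j$ (by the determinant/discriminant condition $(2 c_j^{-1}\rho^{-1}) \ge (c_j^{-1})^2/4$, i.e.\ $\rho \le 4 c_j$). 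Requiring the strict bound $\rho < 4 c_j$ uniformly in $j$, which is exactly the hypothesis, yields $\dot V \le 0$ everywhere, with equality forcing $\omega = \omega^*$, $\alpha = \alpha^*$ and $\Delta\pi = 0$, and hence $q = q^*$ through \eqref{eq:regularized_market_graident_play.a}. A calibration of the time constants $T^{\hat q}, T^\lambda, T^\eta, T^\alpha$ may be needed to make the cross terms line up cleanly; this is standard and can be absorbed into the definition of $V$ by a diagonal rescaling.

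Once $\dot V \le 0$ is established, boundedness of trajectories follows from radial unboundedness of $V$, and I would invoke LaSalle's invariance principle (together with the projection version as used in Theorem~\ref{teo:stability}): on the largest invariant set where $\dot V \equiv 0$, the identities $\omega\equiv\omega^*$, $\alpha\equiv\alpha^*$, $q\equiv q^*$ propagate through the remaining dynamics \eqref{eq:gradient_play_gen_price_bidding_alternative}, \eqref{eq:regularized_market_graident_play.b}--\eqref{eq:regularized_market_graident_play.d} and \eqref{eq:swingdynamics} to force $\dot z, \dot\sigma \equiv 0$, analogous to Proposition~\ref{prop:observability_price}. This pins the trajectory to $\mathbb{E}$, and convergence to a single point $(z^*,\sigma^*)\in\mathbb{E}$ follows by the standard argument that $V(\,\cdot\,;\,z^*,\sigma^*)$ is itself a Lyapunov function for every equilibrium point in $\mathbb{E}$, so any limit point of the trajectory is a global attractor for the trajectory itself.
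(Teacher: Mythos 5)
Your proposal follows essentially the same route as the paper's proof: the same $T$-weighted quadratic Lyapunov function with the non-expansive projection bound \eqref{eq:nonexpansive}, a residual quadratic form in $(\alpha-\alpha^*)$ and the deviation of $\lambda\mathbf{1}-H\eta-\omega-\alpha$ whose negative semidefiniteness is exactly $\rho\le 4c_j$ (the paper obtains this by block-diagonalizing the matrix $W_\sigma$ in \eqref{eq:Wz} via the congruence with $Q$ and $P$, you by an equivalent per-bus $2\times 2$ discriminant check, modulo a stray factor of $2$ in the written inequality), followed by the invariance principle and the standard single-limit-point argument. Two small cautions: $\dot V\equiv 0$ only forces $\alpha\equiv\alpha^*$, $\omega\equiv\omega^*$ and $q\equiv\hat q$, not yet $q\equiv q^*$ --- that is recovered inside the invariant set, as in the paper's Step 2, using $\mathbf{1}^T H=0$ to separate $\lambda-\lambda^*$ from $H(\eta-\eta^*)$ before the case analysis on $\eta$ --- and no recalibration of the time constants is needed, since the weights $T^z,T^\sigma$ in $V$ already make all cross terms cancel exactly.
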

%\enrique{Your statement here goes beyond asymptotic stability. It states that the system is globally asymptotically stable on the domain I. Similarly to other theorems before.}
% \begin{proof}
% See Appendix~\ref{apx:proof_of_teo_stability_reg}.
% \end{proof}
%\dennice{Starting a paragraph with recall is strange and really not gramatically correct since this should be the topic sentence. In general I am not sure why this fits here as you providing an intition behind the proof or something?}
\begin{remark}
Recall $c_j>0$, $j\in\mathcal{N}$ is the given quadratic coefficient in each generation cost function \eqref{eq:quadratic_gen_function}. Therefore, we can always pick a sufficiently small $\rho$ that satisfies the condition such that the grid-market-participant loop \eqref{eq:swingdynamics}, \eqref{eq:gradient_play_gen_price_bidding_alternative}, \eqref{eq:regularized_market_graident_play} asymptotically converges to a target equilibrium point.
\end{remark}

Theorem~\ref{teo:stability_reg} suggests that the proposed market modification with regularization \eqref{eq:edp_reg_price_bidding} is able to accommodate the misalignment from the way \eqref{eq:gradient_play_gen_price_bidding_alternative} each participant bids, such that the modified market dynamics and the misaligned individual bidding behavior can still function as a feedback controller on the power network dynamics to realize all the primary, secondary and tertiary control with compatible participants' incentives.
The required modification, however, highlights the necessity of \emph{robust} market control laws that can better accommodate diverse participants' behavior and potential misalignment. 
% to maintain the role of market dynamics in guaranteeing stability, economic efficiency and incentive compatibility.
% indicates that alternative bidding behavior of even rational price takers in the market can cause misalignment, which, at a faster timescale, render the closed-loop interplay of the physical power network, the market and individual participants unstable. 
% Therefore, an inclusive market mechanism that can accommodate more diverse participants' behavior would be greatly favored.

% \begin{remark}
% The modified market control laws based on \eqref{eq:edp_reg_price_bidding} is more robust to accommodate both price bidding behavior \eqref{eq:gradient_play_gen_price_bidding} and \eqref{eq:gradient_play_gen_price_bidding_alternative} of individual participants, and even potentially a mix of the two.\footnote{The accommodation of \eqref{eq:gradient_play_gen_price_bidding} follows from the rationale of alignment, while the validation for the mixed case is left as an open problem.}%, which will be left as an open problem.%\footnote{The argument is currently limited to the one generator per bus setup and can be proved by properly combining the proofs in Appendices~\ref{apx:proof_of_teo_stability} and \ref{apx:proof_of_teo_stability_reg}. It remains an open problem to generalize this result.}
% \end{remark}

\subsection{Underlying Rationale: Implicit Regularization}
%\dennice{A lead it would be useful here, i.e. how does this connect to the last part or where are you going. It is not clear to me why this is a new subsection at all}
In this subsection we provide more intuitions about the modified market dynamics design \eqref{eq:regularized_market_graident_play}.
We first point out that the minimizer of the real dispatch $q$ in \eqref{eq:regularized_market_graident_play.a} is equivalent to the proximal operator (as a function of $\hat q$) associated with the original $L_m$ (in the variable $q$) in \eqref{eq:L_m_price_bidding}, 
%\hl{\sout{which enjoys properties of fixed points and global convergence, etc.,}}
%\emmargin{Not clear which properties "enjoys"} 
and is commonly used in dealing with non-differentiable optimization problems \cite{dhingra2018proximal}. 
In light of the linear programming formulation of the original market problem \eqref{eq:edp_price_bidding}, we can expect this proximal gradient based market control laws \eqref{eq:regularized_market_graident_play} to secure convergence under milder conditions 
%\emmargin{What do you mean by smoothing here? making it differentiable?}  
%\sout{by replacing $q$ with $\hat q$} 
from an optimization perspective.
%\enrique{I'm not sure what you mean with replacing $q$ with $\hat q$ it sounds like you are just substituting one to another one (like with $\omega$ and $\nu$). This confuses the discussion below.}

We note, however, from a design perspective, the proposed market modification does not recover alignment to connect with standard saddle flows. Indeed, the current grid-market-participant loop \eqref{eq:swingdynamics}, \eqref{eq:gradient_play_gen_price_bidding_alternative}, \eqref{eq:regularized_market_graident_play} still corresponds to the following improper form:
\begin{small}
\beq\label{eq:saddle_flow_variant_reg}
\begin{aligned}
    T^{z} \dot z & = - \nabla_{z}\left\{ \min_{q} \bar L(p,q , \tilde \theta, \alpha, \lambda, \eta,\nu) + \frac{\rho}{2}\Vert q-\hat q\Vert^2 \right\}\Big\vert_{p=p^{\ddagger}}\\ 
    T^{\sigma} \dot \sigma & = \left[  \nabla_{\sigma}  \left\{ \min_{q} \bar L(p,q , \tilde \theta, \alpha, \lambda, \eta,\nu)+ \frac{\rho}{2}\Vert q-\hat q\Vert^2 \right\}\Big\vert_{p=p^{\ddagger}}\right]^+_{\eta}
\end{aligned}
\eeq\end{small}%
with $z=(\hat q, \tilde \theta)$ here.
Compared with \eqref{eq:saddle_flow_variant} ($\hat q$ in place of $q$), the only difference between the target convex-concave functions (with respective minimizers plugged in) 
% \dennice{I would reference equations etc. explicitly if possible}
is an extra implicit regularization term 
\beq\label{eq:implicit_reg}
-\frac{1}{2\rho} \Vert \lambda\cdot \mathbf{1}- H\eta-\nu-\alpha \Vert^2  \ .
\eeq
%\hl{\sout{if we treat $\hat q$ and $q$ equivalently.}}

Note that essentially the misaligned price bidding behavior \eqref{eq:gradient_play_gen_price_bidding_alternative} deviates from the aligned one \eqref{eq:gradient_play_gen_price_bidding} by using the clearing price information $\pi=\lambda\cdot \mathbf{1}- H\eta-\omega$ instead of the local bids $\alpha$. 
Thus, one can understand the role of the regularization term \eqref{eq:implicit_reg} as penalizing the mismatch between the clearing prices $\pi$ and the local bids $\alpha$. The smaller $\rho$ is, the stronger this regularization impact will be and the closer the clearing prices $\pi$ and the local bids $\alpha$ will be tied to each other.
Theorem~\ref{teo:stability_reg} provides a threshold for $\rho$ under which the two quantities are close enough such that the misalignment can be accommodated despite the use of the clearing price information.

\section{Numerical Results}\label{sec:simulation}

%\textcolor{red}{Shall we in the generation plots (which should actually be dispatch $q$) also mark the transient desired generation or quantity bid $p$, which may be a function of either price bid or clearing price, to show compatible participants' incentives, because they eventually converge.}
We test the proposed aligned market dynamics based on quantity and price bidding, as well as the modified market dynamics for misaligned price bidding on the IEEE $39$-bus system to illustrate their interplay with grid dynamics and respective participants' behavior. 
Despite basing our analysis on a linear approximation of the physical swing dynamics, we adopt a high-fidelity model for the numerical tests, including nonlinear power flows and voltage dynamics. 
All of the $10$ generators, located at buses $30$-$39$, are taken as market participants. 
We randomly select three lines $4$, $19$ and $26$ and endow them with relatively small transmission capacity $\pm 3$ p.u. such that the line thermal limits will be binding.
$1$ p.u. ($100$ MW) of step load increase is imposed at bus $30$ at time $0$.  
The $150$-second simulation runs of transient dynamics in response to this instant power imbalance for all the three grid-market-participant loops are presented in Fig.~\ref{fig:compare-bid}.

As we can observe from Fig.~\ref{fig:compare-bid}, %\dennice{Do a search and replace so that use of Fig or Figure is consistent, which ever is preferred}, 
all the three market dynamics are able to drive the respective systems to steady state within $150$ seconds. At equilibrium, they all restore the frequency to its nominal value, respect line thermal limits, and achieve the optimal generation dispatch, which is the unique solution to the planner's problem \eqref{eq:uedp}, with an identical set of incentive compatible clearing prices.
Fig.~\ref{fig:compare-bid.dispatch} and Fig.~\ref{fig:compare-bid.gen} indicate the convergence of the market dispatch $q$ and individual generators' desired output $p$ to the same value, which further suggests that their incentives are aligned.
In the price bidding setting, the generators' bids always converge to their local prices, as displayed in Figure~\ref{fig:compare-bid.pricebid}, despite different market information used in the bidding strategies. 

In addition, Fig.~\ref{fig:rho} shows a representative profile of frequency deviation at bus $34$ with the modified market dynamics for misaligned price bidding, as we vary the regularization coefficient $\rho$ subject to Theorem~\ref{teo:stability_reg}. 
A smaller $\rho$, e.g., $0.1$, enhances the role of the regularization in \eqref{eq:implicit_reg} and amplifies oscillations, while a larger $\rho$, e.g., $10$, damps the oscillations and leads to smoother convergence.
%\enrique{What is the damping period? Fig 4 can be plot up to 100 seconds so that transients are better appreciated.}

\begin{figure*}[t!]
\centering
\subfigure[Frequency deviation]
{\includegraphics[width=250pt, height=91pt]{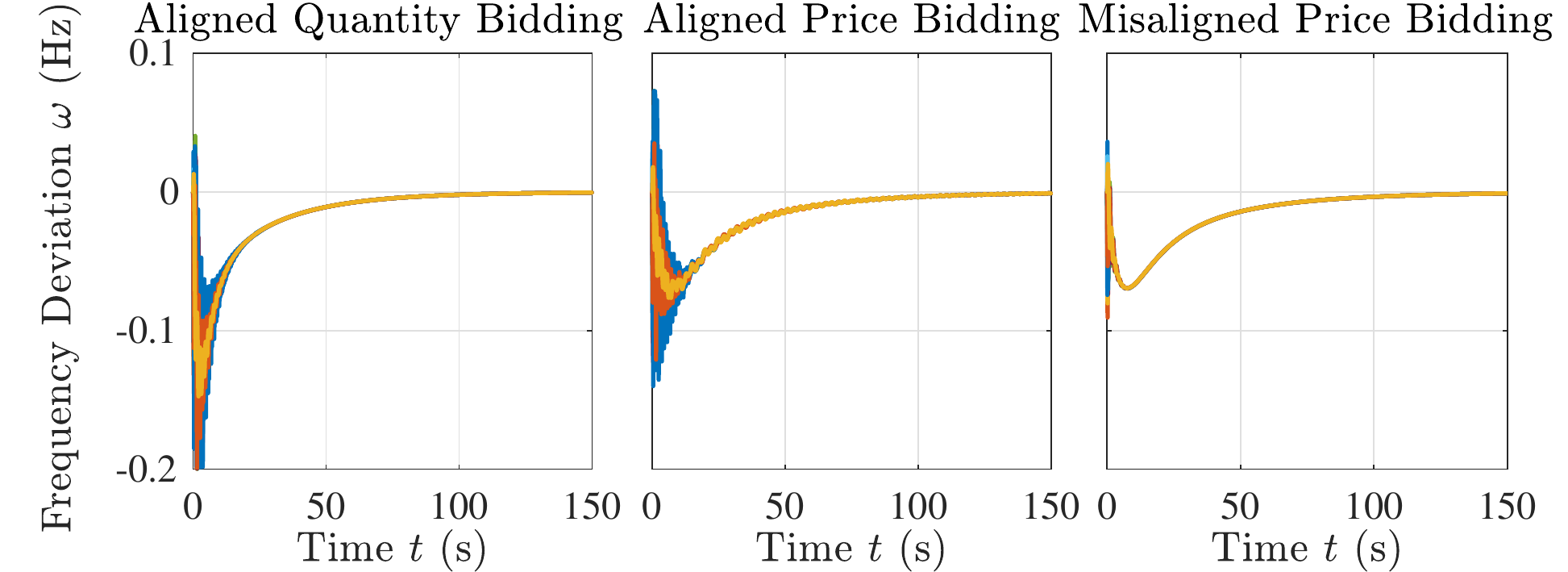}\label{fig:compare-bid.freq}}
\hfill
\subfigure[Selected line flows (thermal limits indicated by dashed lines)]
{\includegraphics[width=250pt, height=91pt]{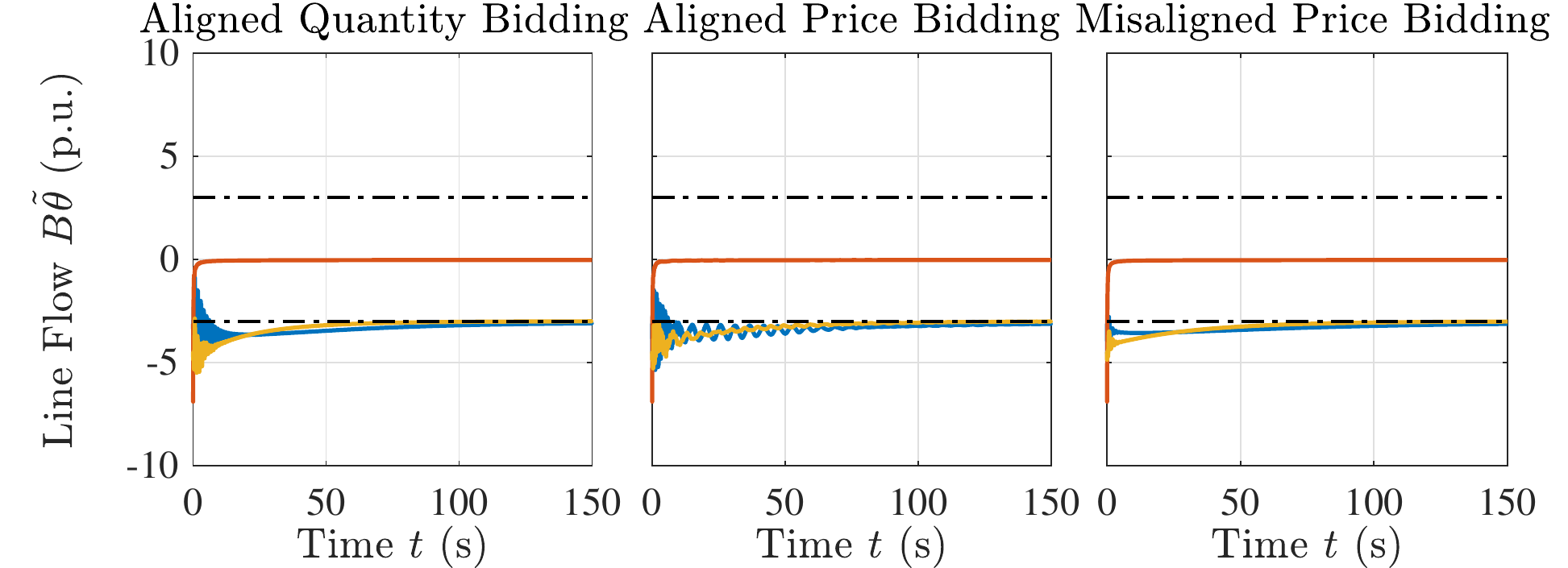}\label{fig:compare-bid.flow}}
\hfill
\subfigure[Generation dispatch]
{\includegraphics[width=250pt, height=91pt]{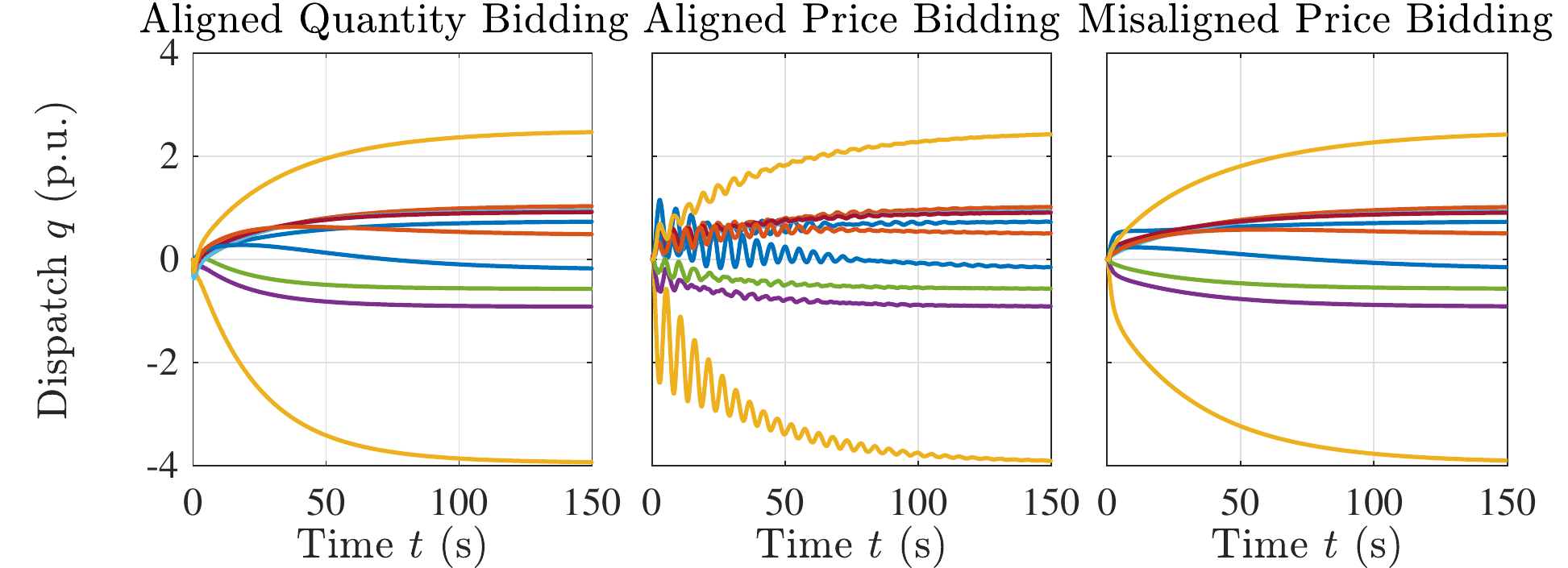}\label{fig:compare-bid.dispatch}}
\hfill
\subfigure[Individual desired generation]
{\includegraphics[width=250pt, height=91pt]{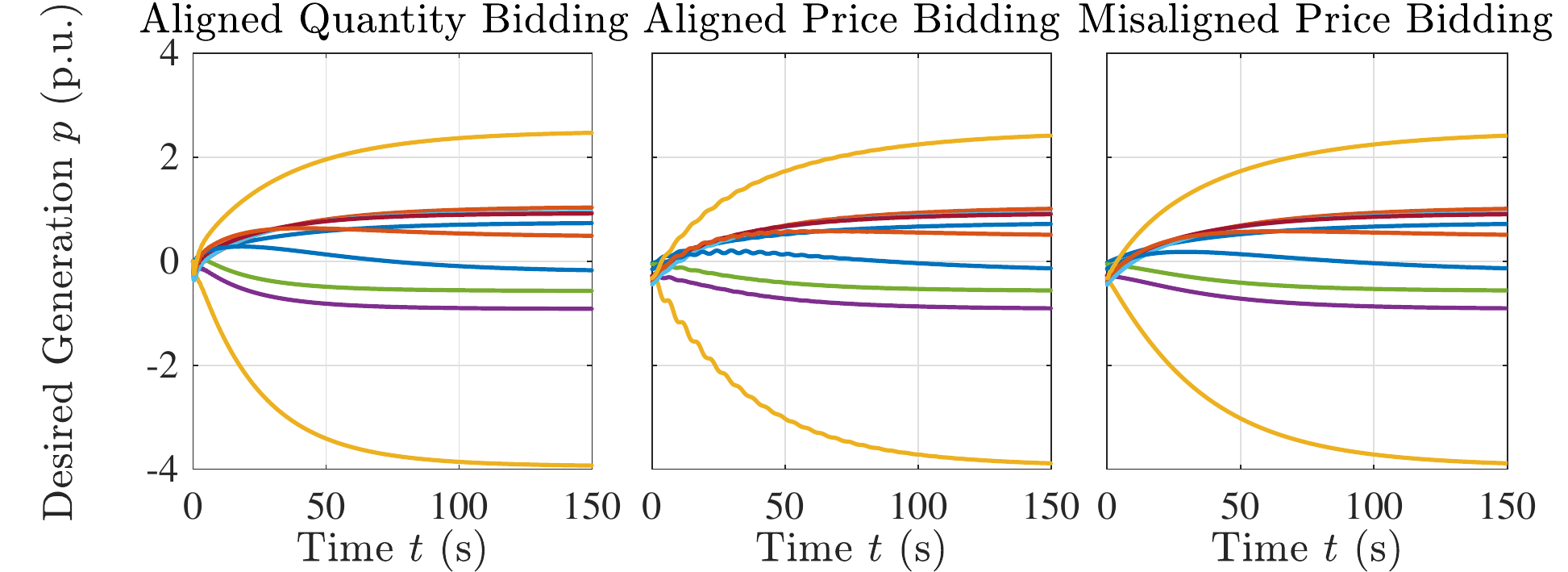}\label{fig:compare-bid.gen}}
\hfill
\subfigure[Clearing prices]
{\includegraphics[width=250pt, height=91pt]{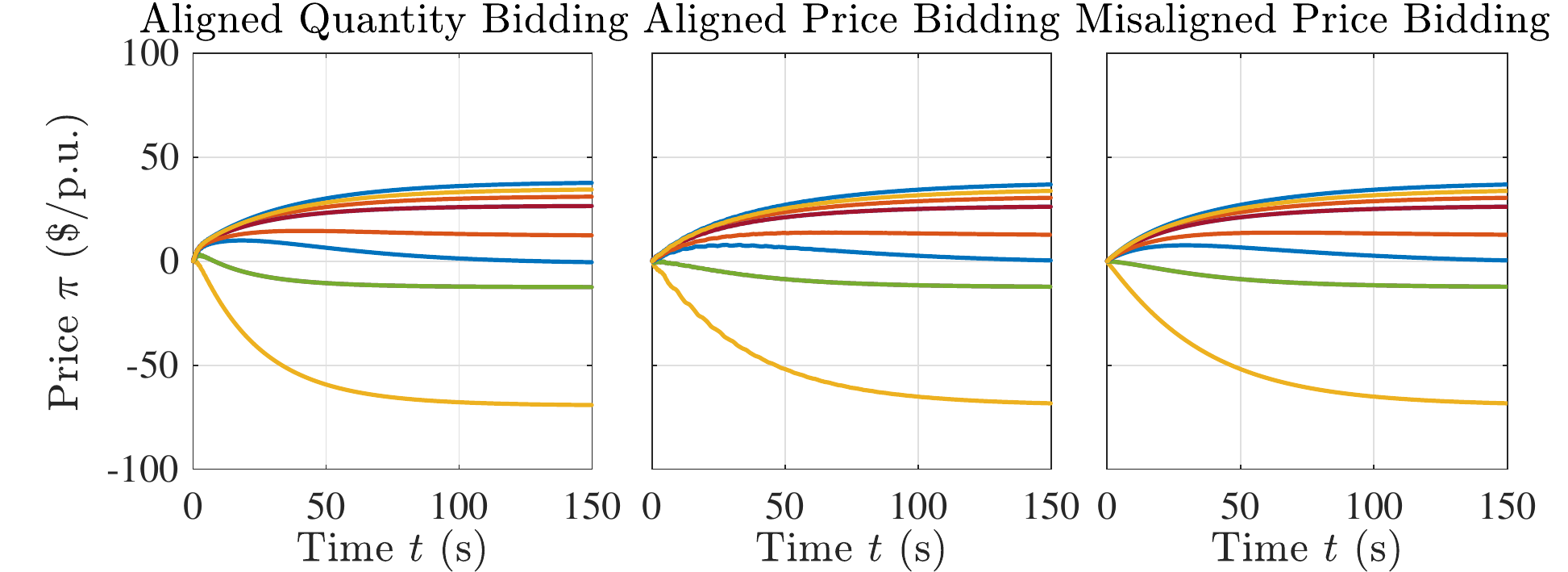}\label{fig:compare-bid.price}}
\hfill
\subfigure[Price bids]
{\includegraphics[width=250pt, height=91pt]{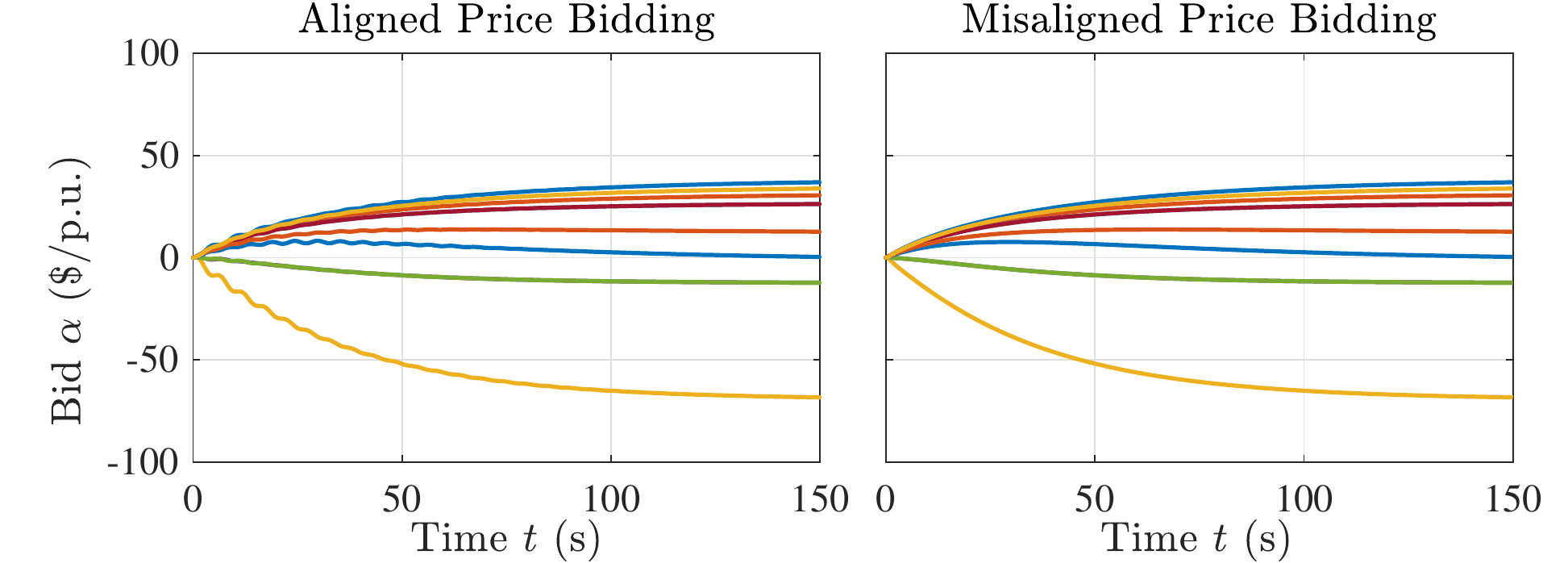}\label{fig:compare-bid.pricebid}}
\caption{Comparison across different market dynamics on instant power imbalance.}
\label{fig:compare-bid}
\end{figure*}

\begin{figure}[t!]
\centering
\includegraphics[width=235pt, height=100pt]{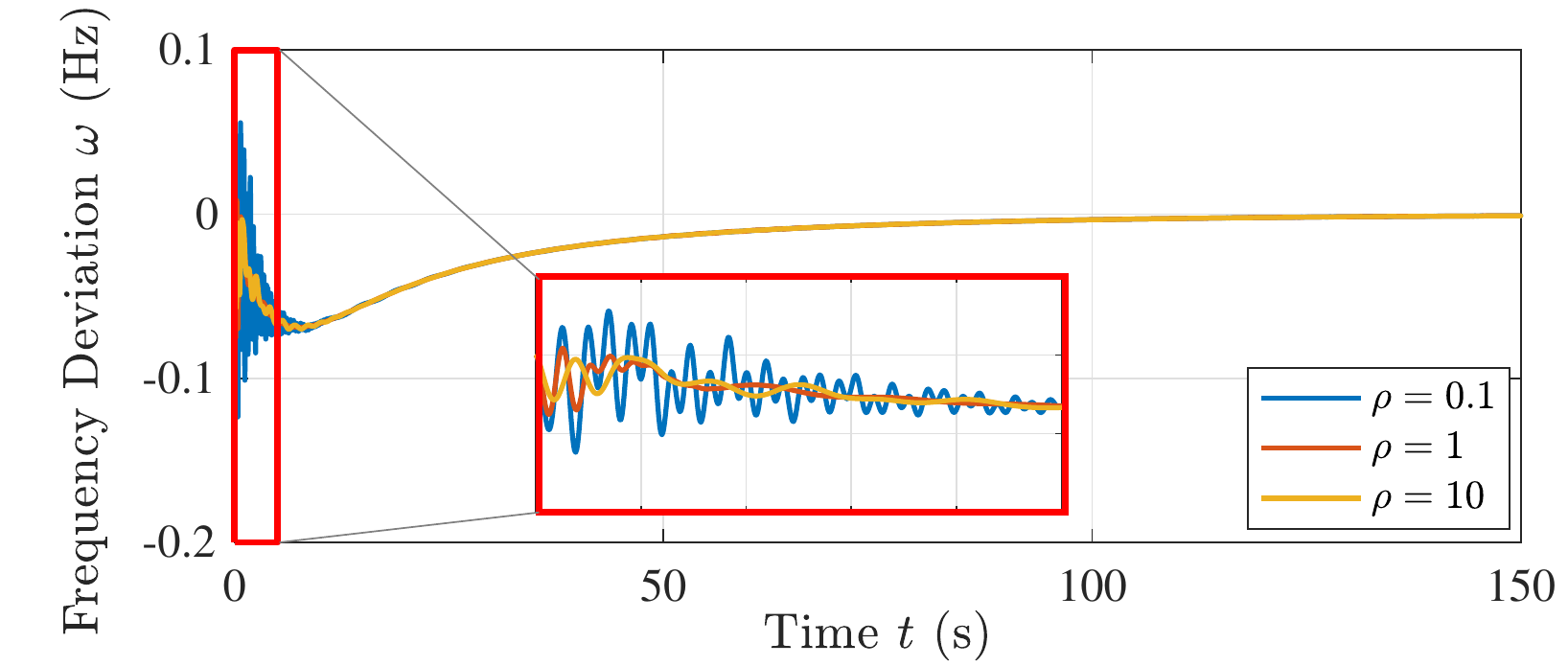}
\caption{Modified market dynamics for misaligned price bidding: impact of $\rho$ on frequency deviation at bus 34.}
\label{fig:rho}
\end{figure}

% \begin{figure}[t!]
% \centering
% \includegraphics[width=0.9\columnwidth]{MBD_v1/figures/quantity_bidding_wthermal}
% \caption{Market dynamics of quantity bidding with congestion management}
% \label{fig:quantity-wothermal}
% \end{figure}

% \begin{figure}[t!]
% \centering
% \includegraphics[width=0.9\columnwidth]{MBD_v1/figures/price_bidding_nthermal}
% \caption{Market dynamics of price bidding without congestion management}
% \label{fig:price-nothermal}
% \end{figure}

% \begin{figure}[t!]
% \centering
% \includegraphics[width=0.9\columnwidth]{MBD_v1/figures/price_bidding_wthermal}
% \caption{Market dynamics of price bidding with congestion management}
% \label{fig:price-wothermal}
% \end{figure}

\section{Conclusion} \label{sec:conclusion}

This paper studies the interaction among grid dynamics, market dynamics and bidding dynamics of individual market participants.
We first develop a principled framework for the design and analysis of aligned market dynamics, conditioned on a family of alignment characterization for individual bidding behavior. %\dennice{this sentence is confusing, not sure what it means to suggest an edit}. 
The alignment connects the associated grid-market-participant loop to a saddle flow whose min-max saddle points, i.e., equilibrium points, optimally solve the target planner's problem to realize the primary, secondary, and tertiary frequency control with compatible participants' incentives.
We show that the framework is general for the bidding mechanisms based on quantity and price, and, under mild conditions, the asymptotic convergence of the closed-loop system to an equilibrium can be guaranteed.
Two specific examples of aligned market dynamics are demonstrated. 
%\dennice{It is not clear how these two examples show broad applicability, this probably needs explicit justification in that section}.
We further study an exemplar of misaligned bidding behavior that can lead to system instability in the absence of the alignment property. 
A solution of modified market control laws is proposed to accommodate such misalignment, highlighting the necessity of market robustness against diverse individual bidding behavior.
Numerical simulations on the IEEE 39-bus system validate our market dynamics designs in terms of steady-state equilibrium and global asymptotic stability.

\appendix

\subsection{Proof of Theorem \ref{teo:planner}} \label{apx:proof_of_teo_planner}

Before we prove the sufficient and necessary conditions, we first show that any optimal primal-dual solution $(p^*,q^*,\omega^*,\tilde \theta^*,\alpha^*,\lambda^*,\eta^*,\nu^*)$ to the planner's problem \eqref{eq:uedp} satisfies $\omega^*  = 0 $.
Since \eqref{eq:uedp.b}-\eqref{eq:uedp.e} are affine, the KKT conditions are both sufficient and necessary to characterize the optimality of \eqref{eq:uedp}:

\bseq\label{eq:UEDP_KKT}
\noindent
\emph{Stationarity}
\begin{align}\label{eq:UEDP_stationarity.a}
& \nabla J(p^*) - \alpha^*   = 0 \\
\label{eq:UEDP_stationarity.b}
& \alpha^* - \lambda^* \cdot \mathbf{1} +  H \eta^*  +  \nu^*   = 0  \\
\label{eq:UEDP_stationarity.c}
& D(\omega^*-\nu^* ) = 0 \\
\label{eq:UEDP_stationarity.d}
& BC^T \nu^* = 0 
\end{align}

\noindent
\emph{Primal feasibility}
\begin{align}\label{eq:UEDP_primalfeas.e}
& q^* = p^* \\
\label{eq:UEDP_primalfeas.f}
& \mathbf{1}^T(q^*-d ) =0 \\
\label{eq:UEDP_primalfeas.g}
& H^T(q^*-d ) \le F  \\
\label{eq:UEDP_primalfeas.h}
& q^*-d  - D \omega^* - CB \tilde \theta^* = 0 
\end{align}

\noindent
\emph{Dual feasibility}
\begin{align}\label{eq:UEDP_dualfeas.i}
\eta^* \ge 0
\end{align}

\noindent
\emph{Complementary slackness}
\begin{align}\label{eq:UEDP_complementary.j}
\diag(\eta^*) \left( H^T(q^*-d) -F \right)  =0
\end{align}
\eseq

\eqref{eq:UEDP_stationarity.d} implies $C^T \nu^* = 0$ due to $B \succ 0 $. Since $C$ is the incidence matrix of the connected graph $(\mathcal{N},\mathcal{E})$, $C^T \nu^* = 0 $ basically means $\nu_j^* = \zeta$, $\forall j\in\mathcal{N}$, where $\zeta$ is a constant.
\eqref{eq:UEDP_stationarity.c} implies $\nu^* = \omega^*$ due to $D \succ 0$. Therefore, $\omega_j^* = \zeta$ holds for $\forall j \in\mathcal{N}$. 
By summing \eqref{eq:UEDP_primalfeas.h} up over all the buses in $\mathcal{N}$, we attain 
\beq\label{eq:omegaeq}
\begin{aligned}
& \mathbf{1}^T(q^* -d - D \omega^* - CB \tilde \theta^*)  \\
= \ & - \mathbf{1}^T D \omega^*   = - \zeta \cdot \mathbf{1}^T D \mathbf{1} =0
\end{aligned}
\eeq
where the first equality follows from \eqref{eq:UEDP_primalfeas.f} and $\mathbf{1}^T C = 0$. \eqref{eq:omegaeq} enforces $\zeta=0$ and $\omega^*=\zeta \cdot \mathbf{1} = 0$.

We are now ready to prove Theorem~\ref{teo:planner} by contradiction.

\subsubsection*{Necessary condition}
Given an optimal solution $(p^*,q^*,\omega^*, \tilde \theta^*)$ to \eqref{eq:uedp} with $\omega^* = 0 $, we assume $q^*$ is not optimal with respect to \eqref{eq:edp2}. 
It means that there exists an optimal solution $q^{\circ} \neq q^*$ to \eqref{eq:edp2} that satisfies \eqref{eq:edp2.b}-\eqref{eq:edp2.c} and meanwhile achieves a strictly smaller objective value, i.e., $ \mathbf{1}^T J(p^{\circ}) <  \mathbf{1}^T J(p^{*})$. Define $p^{\circ} := q^{\circ}$, $\omega^{\circ}: = 0$ and $ \tilde \theta^{\circ} := C^TL^{\dagger}(q^{\circ}- d)$. We can readily observe that $(p^{\circ}, q^{\circ},\omega^{\circ}, \tilde \theta^{\circ})$ satisfies \eqref{eq:uedp.b}-\eqref{eq:uedp.e} and is thus feasible for \eqref{eq:uedp} with a strictly smaller objective value than $(p^*,q^*,\omega^*, \tilde \theta^*)$. 
However, this contradicts the fact that $(p^*,q^*,\omega^*, \tilde \theta^*)$ is optimal with respect to \eqref{eq:uedp}. As a result, $q^*$ is an optimal solution to \eqref{eq:edp2}.

\subsubsection*{Sufficient condition}
Given an optimal solution $q^*$ to \eqref{eq:edp2}, we assume $(p^*,q^*,\omega^*, \tilde \theta^*)$ with $p^*=q^*$, $\omega^*=0$ and $\tilde \theta^*=C^TL^{\dagger}(q^{*}- d)$ is not optimal with respect to \eqref{eq:uedp}. It means that there exists an optimal solution $(p^{\circ},q^{\circ},\omega^{\circ},\tilde \theta^{\circ}) \neq (p^*,q^*,\omega^*, \tilde \theta^*)$ to \eqref{eq:uedp} that satisfies \eqref{eq:uedp.b}-\eqref{eq:uedp.e} and meanwhile achieves a strictly smaller objective value, i.e., $\mathbf{1}^T J(p^{\circ}) + \frac{1}{2}{\omega^{\circ}}^T d \omega^{\circ} = \mathbf{1}^T J(p^{\circ}) <  \mathbf{1}^T J(p^{*}) =  \mathbf{1}^T J(p^{*}) + \frac{1}{2}{\omega^{*}}^T d \omega^{*}$, where $\omega^{\circ} = 0$ holds from its optimality.
Since $q^{\circ}$ satisfies \eqref{eq:uedp.c}-\eqref{eq:uedp.d}, it automatically satisfies \eqref{eq:edp2.b}-\eqref{eq:edp2.c}, and is therefore feasible for \eqref{eq:edp2} with a strictly smaller objective value than $q^*$. 
However, this contradicts the fact that $q^*$ is optimal with respect to \eqref{eq:edp2}. As a result, $(p^*,q^*,\omega^*, \tilde \theta^*)$ with $p^*=q^*$, $\omega^*=0$ and $\tilde \theta^*=C^TL^{\dagger}(q^{*}- d)$ is an optimal solution to \eqref{eq:uedp}.

\subsection{Proof of Theorem \ref{teo:eqm}}   \label{apx:proof_of_teo_eqm}

% \begin{remark}
% In Theorem~\ref{teo:eqm}, a given $(z^*,\sigma^*)$ implies a complete optimal primal-dual solution to the planner's problem \eqref{eq:uedp}, due to the recovery of the reduced variables, e.g., $q^*=p^*$, $\alpha^* = \lambda^*\cdot \mathbf{1} -H\eta^* -\omega^*$ (quantity bidding), $p^*=\left(\nabla J \right)^{-1}(\alpha^*)$ (price bidding), and $\omega^* = \nu^*$ (both).
% \end{remark}

Define $\phi:= (p,q,\omega,\tilde \theta, \alpha, \lambda,\eta,\nu )\backslash(z,\sigma)$ to be the variables that are reduced and updated by taking their optimizers based on
\eqref{eq:gen_gradient_general.b}, \eqref{eq:swing_dynamics_Lagrangian_interpretation.a} and \eqref{eq:market_gradient_general.b}.

\subsubsection*{Necessary condition}
We first ignore the particular projection to guarantee $\eta\ge 0$.
An equilibrium $(z^*,\sigma^*)$ of the grid-market-participant loop \eqref{eq:grid-market-participant_loop} means
\bseq\label{eq:eq_condition}
\begin{align}
\label{eq:eq_condition.a}
& \nabla_z \hat L(z^*,\sigma^*) = 0 \ , \\
\label{eq:eq_condition.b}
& \nabla_{\sigma} \hat L(z^*,\sigma^*) = 0 \ ,
\end{align}
with 
\beq\nonumber
\hat L (z^*,\sigma^*) : = L(z^*,\sigma^*,\phi(z^*,\sigma^*)) \ ,
\eeq
where $\phi(z,\sigma)$ contains the corresponding maximizers or minimizers of $L(z,\sigma,\phi)$, always satisfying
\beq\label{eq:eq_condition.c}
\nabla_{\phi} L(z^*,\sigma^*,\phi(z^*,\sigma^*)) = 0 \ ,
\eeq
\eseq
due to the finiteness of $\hat L$ by Assumption~\ref{ass:finiteness}.

Let $\phi^*: = \phi(z^*,\sigma^*)$ be the short hand.
Note that \eqref{eq:eq_condition.a}, \eqref{eq:eq_condition.b} imply 
\begin{small}
\bseq\label{eq:eq_condition_transform}
\begin{align}
\label{eq:eq_condition_transform.a}
&    \nabla_z \hat L(z^*,\sigma^*) = \left(\nabla_z  L + {\frac{\partial \phi}{\partial z} }^T \nabla_{\phi} L \right) \Bigg\vert_{(z^*,\sigma^*,\phi^*)}= 0 \ , \\
\label{eq:eq_condition_transform.b}
&    \nabla_{\sigma} \hat L(z^*,\sigma^*) =  \left( \nabla_{\sigma}  L + {\frac{\partial \phi}{\partial \sigma} }^T \nabla_{\phi} L \right) \Bigg \vert_{(z^*,\sigma^*,\phi^*)} = 0 \ ,
\end{align}
\eseq
\end{small}%
where $\frac{\partial \phi}{\partial z}$ and $\frac{\partial \phi}{\partial \sigma}$ contain the (sub)gradients of $\phi$ with respect to $z$ and $\sigma$, respectively.
\eqref{eq:eq_condition.c} and \eqref{eq:eq_condition_transform} jointly lead to
\beq\label{eq:eq_stationarity}
   \nabla_z  L(z^*,\sigma^*,\phi^*) = 0 ~\textrm{and}~
   \nabla_{\sigma}  L(z^*,\sigma^*,\phi^*) =0 \ .
\eeq
Indeed, \eqref{eq:eq_condition.c}, \eqref{eq:eq_stationarity} equate with all the stationarity conditions and equality feasibility conditions in the KKT conditions \eqref{eq:UEDP_KKT}. 

We next consider the effect of the projection for $\eta \ge 0$. If $\eta$ is contained in $\sigma$, we have
\beq
\left[ \nabla_{\eta} \hat L(z^*,\sigma^*) \right]^+_{\eta} =\left[ H^T(q^*-d) -F\right]^+_{\eta} =0 \ ,
\eeq
which suggests for any $e^{\rm th}$ elements of these vectors  either $\left(H^T(q^* -d)\right)_e =F_e$ with $\eta_e^* \ge 0$ or $\left(H^T(q^* -d)\right)_e <F_e$ with $\eta_e^* =0$. In either case, the inequality feasibility conditions \eqref{eq:UEDP_primalfeas.g}, \eqref{eq:UEDP_dualfeas.i} and the complementary slackness condition \eqref{eq:UEDP_complementary.j} are simultaneously guaranteed.

If $\eta$ is contained in $\phi$, it follows from \eqref{eq:market_gradient_general.b} and Assumption~\ref{ass:finiteness} that
\beq
 \nabla_{\eta}  L(z^*,\sigma^*, \phi^*)  \le 0 ~ \textrm{and}~ \diag(\eta^*) \nabla_{\eta}  L(z^*,\sigma^*, \phi^*) =0
\eeq
should hold to guarantee the finiteness of $\hat L$ in the presence of $\eta  \ge 0$. Therefore, the inequality feasibility conditions \eqref{eq:UEDP_primalfeas.g}, \eqref{eq:UEDP_dualfeas.i} and the complementary slackness condition \eqref{eq:UEDP_complementary.j} are also met.

All the KKT conditions \eqref{eq:UEDP_KKT} are satisfied in any case and thus $(z^*,\sigma^*,\phi^*)$ is an optimal solution to the planner's problem \eqref{eq:uedp}.

\subsubsection*{Sufficient condition}

Given an optimal solution $(z^*,\sigma^*,\phi^*)$ to the planner's problem \eqref{eq:uedp} that satisfies the KKT conditions \eqref{eq:UEDP_KKT}, if $\eta$ is contained in $\sigma$, all the stationarity conditions and equality feasibility conditions basically equate with \eqref{eq:eq_condition.c}, \eqref{eq:eq_stationarity}, and thus \eqref{eq:eq_condition.a}, \eqref{eq:eq_condition.b}, for all the variables except $\eta$.
Further, the inequality feasibility condition \eqref{eq:UEDP_primalfeas.g} suggests $\nabla_{\eta}\hat L(z^*,\sigma^*) = 0$ immediately if it attains equality; otherwise, the complementary slackness condition \eqref{eq:UEDP_complementary.j} mandates $\eta^* = 0$, which projects the negative $\nabla_{\eta}\hat L(z^*,\sigma^*)$ to zero through the projection $[\cdot]^+_{\eta}$. Therefore, $(z^*,\sigma^*)$ is an equilibrium point of \eqref{eq:grid-market-participant_loop}.

If $\eta$ is contained in $\phi$, indeed the optimality condition of \eqref{eq:market_gradient_general.b} suggests that
\beq
\diag(\eta) \nabla_{\eta}  L(z,\sigma, \phi)= \diag(\eta)\left(H^T(q-d)-F\right) =0
\eeq
should always hold, by Assumption~\ref{ass:finiteness}. In this case, $\hat L$ is independent of $\eta$. Then all the stationarity conditions and equality feasibility conditions in the KKT conditions \eqref{eq:UEDP_KKT} suffice to show \eqref{eq:eq_condition.a}, \eqref{eq:eq_condition.b} and validate that $(z^*,\sigma^*)$ is still an equilibrium point of \eqref{eq:grid-market-participant_loop}.

\subsection{Proof of Theorem \ref{teo:stability}}    \label{apx:proof_of_teo_stability}

We explicitly demonstrate the asymptotic convergence of the grid-market-participant loop \eqref{eq:grid-market-participant_loop}. 
%In the context of quantity bidding, we have $z= (p, \tilde \theta)$ and $\sigma = (\lambda,\omega,\eta)$ and focus on the trajectories $(z(t),\sigma(t))$ that start with initial points in $\mathbb{I}$.
Denote the largest invariant set between the on-off switches of the projection $[\cdot]^+_{\eta}$ as
\beq\label{eq:largest_invariant_set}
\mathbb{S}:=\bigg\{(z,\sigma) \ | \  \dot V(z(t),\sigma(t))  \equiv 0, \ t\in\mathbb{R}_{\ge0}\backslash \mathbb{T}  \bigg\} \ ,
\eeq
where $V(z,\sigma)$ is a real-valued Lyapunov function and $\mathbb{T}$ consists of all the time epochs when the projection switches between on and off.
The whole proof boils down to three steps:
\begin{itemize}
	\item \textbf{Step 1}: Each trajectory $(z(t),\sigma(t))$ converges to the largest invariant set $\mathbb{S}$.
		\item \textbf{Step 2}: Any trajectory $(z(t),\sigma(t))$ in the largest invariant set $\mathbb{S}$ is an equilibrium of the closed-loop system \eqref{eq:grid-market-participant_loop}, i.e., $\mathbb{S}\subset \mathbb{E}$.
			\item \textbf{Step 3}: Each trajectory $(z(t),\sigma(t))$ literally converges to a single equilibrium point of the closed-loop system \eqref{eq:grid-market-participant_loop}.
\end{itemize}

We next prove each individual step.
 
\noindent
 \textbf{Step 1}:
 
The grid-market-participant loop \eqref{eq:grid-market-participant_loop} is essentially implementing a (projected) saddle flow on $\hat L(z,\sigma)$, and 
each equilibrium point $(z^*,\sigma^*)$ is therefore a min-max saddle point of $\hat L(z,\sigma)$ (within a specified domain).
Consider the following quadratic Lyapunov function for $V(\cdot)$: 
\beq\label{eq:quadratic_Lyap} 
 V(z,\sigma)
 := \  \frac{1}{2}
\begin{bmatrix}
z-z^* \\ \sigma-\sigma^* 
\end{bmatrix}^T
\begin{bmatrix}
	T^z & \\
	& T^\sigma   
\end{bmatrix}
\begin{bmatrix}
z-z^* \\ \sigma-\sigma^* 
\end{bmatrix} \ ,
\eeq
where $(z^*,\sigma^*)$ is one arbitrary equilibrium of the closed-loop system \eqref{eq:grid-market-participant_loop}.
%We define $\nu(t) :  = \omega(t)$ for all $t\ge0$ and $\nu^* := \omega^*$, and 
The time derivative of $V(z,\sigma)$ along the trajectory of $(z(t),\sigma(t))$ is given by
\begin{small}
\bseq
\begin{align}\nonumber
	&\dot V(z,\sigma) \\
	\label{eq:dotVQB.a}
	=  \ & 
(z-z^*)^T T^z \dot z + (\sigma-\sigma^*)^T T^{\sigma} \dot \sigma \\
	\label{eq:dotVQB.b}
 = \ & 	- (z-z^*)^T \nabla_z \hat L(z,\sigma)
+
(\sigma-\sigma^*)^T  \left[\nabla_{\sigma} \hat L(z,\sigma) \right]_{\eta}^+  \\
	\label{eq:dotVQB.c}
\le  \ & - (z-z^*)^T \nabla_z \hat L(z,\sigma)
+
(\sigma-\sigma^*)^T  \nabla_{\sigma} \hat L(z,\sigma)   \\
	\label{eq:dotVQB.d}
\le  \ & \hat L(z^*,\sigma) -  \hat L(z,\sigma)    +  \hat L(z,\sigma) - \hat L(z,\sigma^*)   \\
	\label{eq:dotVQB.e}
	=  \ & \underbrace{ \hat L(z^*,\sigma) -  \hat L(z^*,\sigma^*)    }_{\le 0}   + \underbrace{ \hat L(z^*,\sigma^*) -  \hat L(z,\sigma^*)     }_{\le 0} \ .
% 	\label{eq:dotVQB.f}
% \le \ & 0 \ .
\end{align}
\eseq\end{small}%
\eqref{eq:dotVQB.b} applies \eqref{eq:grid-market-participant_loop}. \eqref{eq:dotVQB.c} uses the non-expansive property of the projection $[\cdot]^+_{\eta}$ demonstrated in \eqref{eq:nonexpansive}.
\eqref{eq:dotVQB.d} results from the convexity (resp. concavity) of $ \hat L(z,\sigma)$ in $z$ (resp. $\sigma$). \eqref{eq:dotVQB.e} finally follows from the saddle property of the equilibrium point $(z^*,\sigma^*)$.

Since $V(z,\sigma)$ is radially unbounded, $\dot V(z,\sigma)\le 0$ indicates that all the trajectories $(z(t),\sigma(t))$ remain bounded. 
It then follows from the invariance principle for Caratheodory systems \cite{bacciotti2006nonpathological} that $(z(t),\sigma(t))$ converges to the largest invariant set $\mathbb{S}$.

\noindent
\textbf{Step 2}:

For any trajectory $(z(t),\sigma(t)) \in \mathbb{S}$, $\dot V (z,\sigma) \equiv 0$ enforces \eqref{eq:dotVQB.e} to be zero with
\beq
 \hat L(z(t),\sigma^*) \equiv  \hat L(z^*,\sigma^*)~\textrm{and}~  \hat L(z^*,\sigma(t))  \equiv  \hat L(z^*,\sigma^*)  \ .
\eeq
By Assumption~\ref{ass:observability}, we have $\dot z, \dot \sigma \equiv 0$ for any trajectory $(z(t),\sigma(t))$ in the largest invariant set $\mathbb{S}$, which is therefore an equilibrium of the closed-loop system \eqref{eq:grid-market-participant_loop}, i.e., $\mathbb{S}\subset \mathbb{E}$.

\noindent
\textbf{Step 3}:

We now show any trajectory $(z(t),\sigma(t))$ indeed converges to one single equilibrium.
First of all, along any trajectory $(z(t),\sigma(t))$, $V(z,\sigma)$ is non-increasing in $t$. Since $V(z,\sigma)$ is lower bounded given its quadratic form, there exists an infinite sequence of time epochs $\{t_k,k=1,2,\dots\}$ such that with $k \rightarrow \infty$, we have $\dot V(z(t_k),\sigma(t_k)) \rightarrow 0$, i.e., $(z(t_k),\sigma(t_k)) \rightarrow (\hat z^*,\hat \sigma^*)  \in \mathbb{S} \subset \mathbb{E}$. We use this specific $(\hat z^*,\hat \sigma^*) $ as the equilibrium point in the definition of $V(z,\sigma)$, which implies $V(z(t),\sigma(t)) \rightarrow V(\hat z^*,\hat \sigma^*) =0$. Due to the continuity of $V(z,\sigma)$, $(z(t),\sigma(t)) \rightarrow (\hat z^*, \hat \sigma^*)$ is enforced, which therefore suggests that $(z(t),\sigma(t)) $ indeed converges to one single equilibrium point in $\mathbb{S}\subset \mathbb{E}$. \qed

%\vspace{1em}
% This completes the proof of asymptotic stability of the quantity-bidding closed-loop system \eqref{eq:swingdynamics}, \eqref{eq:gradient_play_gen_quantity_bidding}, \eqref{eq:gradient_play_market_quantity_bidding}. The same pipeline applies to the price-bidding closed-loop system \eqref{eq:swingdynamics}, \eqref{eq:gradient_play_gen_price_bidding}, \eqref{eq:gradient_play_market_price_bidding} with a different convex-concave reduced planner's Lagrangian, defined in \eqref{eq:reduced_Lagrangian_price}:
% \begin{small}
% \begin{equation*}
% \begin{aligned}
% 		  \hat L(z,\sigma)
% 		 =  &       \mathbf{1}^T  J\left((\nabla J)^{-1}(\alpha)\right) -  \frac{1}{2} \omega^T D\omega
% 		 + \alpha^T\left(q-(\nabla J)^{-1}(\alpha)\right) \\
% 		 & - \lambda \cdot   \mathbf 1^T (q-d)  
% 		 + \eta^{T} \left(  H^T(q-d) - F  \right) \\
% 		 & + \omega^T \left(   q  - d   - C B   \tilde \theta  \right) 
% \end{aligned}
% \end{equation*}\end{small}%
% with $z=(q,\tilde\theta)$ and $\sigma=(\lambda,\omega,\alpha,\eta)$. 
% The choice of the Lyapunov function and the analysis using the LaSalle's invariance principle remain the same. The details are skipped due to the page limit.

\subsection{Proof of Proposition~\ref{prop:observability_quantity}}\label{apx:proof-of-prop_observability}

Given $\hat L$ in \eqref{eq:reduced_Lagrangian_quantity} and an arbitrary trajectory $(z(t),\sigma(t))$ of the grid-market-participant loop \eqref{eq:grid-market-participant_loop} that satisfies
\beq\label{eq:QBinvariantcond1}
 \hat L(z(t),\sigma^*) \equiv  \hat L(z^*,\sigma^*)
\eeq
and 
\beq\label{eq:QBinvariantcond2}
 \hat L(z^*,\sigma(t))  \equiv  \hat L(z^*,\sigma^*)  \ ,
\eeq
differentiating \eqref{eq:QBinvariantcond1} with respect to time yields
\beq
\begin{aligned}
0 \equiv & \left(\nabla_z \hat L(z(t),\sigma^*) \right)^T\dot z \\
\equiv & - \left(\nabla_z \hat L(z(t),\sigma^*) \right)^T   {T^{z}}^{-1}\nabla_z \hat L(z(t),\sigma^*)  \ ,
\end{aligned}
\eeq
which indicates $\nabla_z \hat L(z(t),\sigma^*) \equiv 0 $ due to ${T^{z}}^{-1} \succ 0$. % Then $\dot z \equiv 0 $ follows immediately.
Given $\sigma^*$, the fact of $\nabla_p \hat L(z(t),\sigma^*) \equiv 0 $ enforces $p(t) \equiv  p^*$, due to the monotonicity of $\nabla J(\cdot)$.

% Furthermore, the fact of $\frac{\partial \hat L_{QB}}{\partial \tilde \theta }(z(t),\sigma^*) \equiv 0 $ and $\nu^*=0$ suggests $\tilde\theta (t) \equiv \tilde \theta^{\circ}$ for some constant vector $\tilde \theta^{\circ}$ that satisfies 
% \beq
% p^*-d^*+ r -D\nu^* -CB\tilde \theta^{\circ} \equiv 0.
% \eeq
% Therefore, $\tilde \theta(t) \equiv \tilde \theta^{\circ} = \tilde \theta^*$ follows by satisfying \eqref{eq:CUEDP_primalfeas.a}.

Similarly, differentiating \eqref{eq:QBinvariantcond2} with respect to time gives
\beq\label{eq:QBinvariantcond2_detail}
\begin{aligned}
	0 \equiv & \left(\nabla_{\sigma} \hat L(z^*,\sigma(t)) \right)^T\dot \sigma \\
	\equiv & \left(\nabla_{\sigma} \hat L(z^*,\sigma(t)) \right)^T  {T^{\sigma}}^{-1}\left[ \nabla_{\sigma} \hat L(z^*,\sigma(t)) \right]^+_{\eta} \ ,
\end{aligned}
\eeq
% Note that the projection $[\cdot]^+_{\eta}$ only projects elements corresponding to $\eta$ in $\sigma$ to zero if applicable.
which still enforces $\nabla_{\nu} \hat L(z^*,\sigma(t))  \equiv 0$, $\nabla_{\lambda} \hat L(z^*,\sigma(t))  \equiv 0$ and meanwhile
\beq\label{eq:QBinvariantcond2_projection}
\left(\nabla_{\eta} \hat L(z^*,\sigma(t)) \right)^T  {T^{\eta}}^{-1}\left[ \nabla_{\eta} \hat L(z^*,\sigma(t)) \right]^+_{\eta} \equiv 0 
\eeq
due to ${T^{\sigma}}^{-1} \succ 0$.
%The fact of $\frac{\partial \hat L_{QB}}{\partial \nu}(z^*,\sigma(t))  \equiv 0$ and $\frac{\partial \hat L_{QB}}{\partial \lambda}(z^*,\sigma(t))  \equiv 0$ implies $\dot \nu, \dot \lambda \equiv 0$.

% Given $\tilde \theta(t) \equiv \tilde \theta^*$, the fact of $\frac{\partial \hat L_{QB}}{\partial \nu}(z^*,\sigma(t))  \equiv 0$ suggests that 
% $\nu(t) \equiv \nu^{\circ}(t) \cdot\mathbf{1}$ holds for some scalar trajectory $\nu^{\circ}(t)$ that satisfies
% \beq
%  p^* - d^* +r -D( \nu^{\circ}(t) \cdot \mathbf{1} )- CB\tilde \theta^* \equiv 0
% \eeq
% which yields $\nu^{\circ}(t) \cdot \mathbf{1}^TD\mathbf{1} \equiv 0 $ if summed up over all the buses in $\mathcal{N}$. $\nu^{\circ}(t) \equiv 0$, i.e., $\nu(t)\equiv \nu^* =0$, follows from $D\succ 0$.

Given $p(t)\equiv p^*$, the fact of $\nabla_{\lambda} \hat L(z^*,\sigma(t))  \equiv 0$ implies $\dot \lambda \equiv 0$.
Meanwhile, in terms of \eqref{eq:QBinvariantcond2_projection}, the inner term $H^T(p^*-d)-F$ of the projection $[\cdot]^+_{\eta}$ is a constant vector. Consider an arbitrary $e^{\rm th}$ element of the vector which falls into three cases: (a) $(H^T(p^*-d)-F)_e > 0$; (b) $(H^T(p^*-d)-F)_e = 0$; (c) $(H^T(p^*-d)-F)_e < 0$. In case (a), $\dot \eta_e > 0$ drives $\eta_e(t)$ to infinity, which cannot happen since all the trajectories in the largest invariant set $\mathbb{S}$ are bounded. 
Case (b) immediately implies $\dot \eta_e \equiv 0$. In case (c), since ${T^{\eta}}^{-1} \succ 0$ is diagonal, $\left(\nabla_{\eta} \hat L(z^*,\sigma(t))\right)_e < 0$ enforces $\left[\left( \nabla_{\eta} \hat L(z^*,\sigma(t)) \right)_e\right]^+_{\eta_e} \equiv 0$, i.e., $\dot \eta_e \equiv 0$, in order for \eqref{eq:QBinvariantcond2_projection} to hold.
As a result, $\dot \eta \equiv 0$ is guaranteed.
On top of $\dot \lambda, \dot \eta \equiv 0$, $T^p \dot p = \lambda(t)\cdot \mathbf{1} -H \eta(t) - \nu(t) -\nabla_p J(p^*) \equiv 0 $ enforces $\dot \nu \equiv 0$, or $\nu(t) \equiv \bar \nu$, where $\bar \nu$ is constant.
The fact of $\nabla_{\nu} \hat L(z^*,\sigma(t))  \equiv 0$ suggests
\beq
 p^* - d -D \bar \nu - CB\tilde \theta^* \equiv 0 \ .
\eeq
Due to $p^* - d -D \nu^* - CB\tilde \theta^* = 0$ from \eqref{eq:UEDP_primalfeas.e}, \eqref{eq:UEDP_primalfeas.h}, $\nu(t)\equiv \bar \nu = \nu^* = 0$ follows immediately due to $D \succ 0$, which further implies $\dot {\tilde \theta} \equiv 0$.
This completes the proof of $\dot z, \dot \sigma \equiv 0$.

\subsection{Proof of Theorem \ref{teo:eqm_reg}}   \label{apx:proof_of_teo_eqm_reg}

Note that the closed-loop system \eqref{eq:swingdynamics}, \eqref{eq:gradient_play_gen_price_bidding_alternative}, \eqref{eq:regularized_market_graident_play} is defined on $z=(\hat q, \tilde \theta)$ and $\sigma=(\lambda,\omega,\alpha,\eta)$.

\subsubsection*{Necessary condition}
Note that the reduced variables $p^*, q^*,\nu^*$ are recovered to satisfy $p^*=\left( \nabla J\right)^{-1}(\lambda^*\cdot \mathbf{1} -H \eta^* -\omega^*)$, $ q^* = \frac{1}{\rho} (\lambda^*\cdot \mathbf{1} -H \eta^* -\omega^* - \alpha^*) + \hat q^*$ and $\nu^* = \omega^*$. Therefore, \eqref{eq:UEDP_stationarity.c} is self-evident.
Given an equilibrium $(z^*,\sigma^*)$ of the closed-loop system \eqref{eq:swingdynamics}, \eqref{eq:gradient_play_gen_price_bidding_alternative}, \eqref{eq:regularized_market_graident_play}, $\dot{\hat q} = 0$ enforces \eqref{eq:UEDP_stationarity.b} and further indicates $q^* = \hat q^*$ and \eqref{eq:UEDP_stationarity.a}.
$\dot{\tilde \theta},\dot\omega,\dot \lambda =0$ similarly suffice to show $\omega^*=\nu^* = 0$, suggesting \eqref{eq:UEDP_stationarity.d}, from the previous analysis \eqref{eq:omegaeq}.
Besides, $\dot \alpha =0$ implies $q^* = p^*$, i.e., \eqref{eq:UEDP_primalfeas.e}. \eqref{eq:UEDP_primalfeas.f}, \eqref{eq:UEDP_primalfeas.h} then immediately follow from $\dot \lambda,\dot \omega =0$, respectively.
The definition of the projection $[\cdot]^+_{\eta}$ and $\dot \eta =0$ can guarantee \eqref{eq:UEDP_primalfeas.g}, \eqref{eq:UEDP_dualfeas.i} and \eqref{eq:UEDP_complementary.j}, based on the previous discussion in Appendix~\ref{apx:proof_of_teo_eqm}.
So far the KKT conditions \eqref{eq:UEDP_KKT} of the planner's problem \eqref{eq:uedp} have all been met from $\dot z,\dot \sigma=0$, indicating its optimality.

\subsubsection*{Sufficient condition}
Given the KKT conditions \eqref{eq:UEDP_KKT} of the planner's problem \eqref{eq:uedp}, we now aim to attain $\dot z, \dot \sigma=0$.
First of all, due to \eqref{eq:UEDP_stationarity.b}, real dispatch $q^*$ defined in \eqref{eq:regularized_market_graident_play.a} reduces to $q^* = \hat q^*$, immediately suggesting $\dot{\hat q} =0$.
\eqref{eq:UEDP_stationarity.a}, \eqref{eq:UEDP_stationarity.b} and \eqref{eq:UEDP_primalfeas.e} jointly enforce $\dot \alpha =0$.
$\omega^*=0$, \eqref{eq:UEDP_primalfeas.f} and \eqref{eq:UEDP_primalfeas.h} imply $\dot{\tilde \theta}, \dot \lambda, \dot \omega= 0$, respectively.
Given \eqref{eq:UEDP_primalfeas.g}, $\dot \eta = 0$ holds immediately if it attains equality; otherwise, \eqref{eq:UEDP_complementary.j} enforces $\eta^* = 0$, which still indicates $\dot \eta = 0$ due to the projection $[\cdot]^+_{\eta}$.
So far we have shown $\dot z,\dot \sigma =0$, indicating that $(z^*,\sigma^*)$ is an equilibrium of the closed-loop system \eqref{eq:swingdynamics}, \eqref{eq:gradient_play_gen_price_bidding_alternative}, \eqref{eq:regularized_market_graident_play}, starting from one initial point in $\mathbb{I}$.

\subsection{Proof of Theorem \ref{teo:stability_reg}}    \label{apx:proof_of_teo_stability_reg}

Recall the quadratic form of generation cost function \eqref{eq:quadratic_gen_function}, parameterized by $c:=(c_j,j\in\mathcal{N})$ and $\bar c:=(\bar c_j, j\in\mathcal{N})$. We further define $c^{-1}: =(c_j^{-1},j\in\mathcal{N})$. Therefore, \eqref{eq:gradient_play_gen_price_bidding_alternative} can be more explicitly expressed as 
\beq\nonumber
T^{\alpha} \dot \alpha = \frac{1}{\rho}(\lambda\cdot \mathbf{1} - H \eta -\omega -\alpha) + \hat q - \diag(c^{-1})(\lambda\cdot \mathbf{1} - H \eta -\omega - \bar c) .
\eeq
With the closed-loop system \eqref{eq:swingdynamics}, \eqref{eq:gradient_play_gen_price_bidding_alternative}, \eqref{eq:regularized_market_graident_play} defined on $z = (\hat q, \tilde \theta)$ and $\sigma= (\lambda,\omega,\alpha,\eta)$, we can define a square matrix $W \in \mathbb{R}^{(3|\mathcal{N}|+3|\mathcal{E}|+1)\times(3|\mathcal{N}|+3|\mathcal{E}|+1)}$ to summarize the right-hand-side linear dependence of the differential equations \eqref{eq:swingdynamics}, \eqref{eq:gradient_play_gen_price_bidding_alternative}, \eqref{eq:regularized_market_graident_play} on $(z,\sigma)$ such that we attain a more compact form:
\beq
\begin{bmatrix}
	T^z & \\
	& T^{\sigma}
\end{bmatrix}
\begin{bmatrix}
\dot z \\ \dot \sigma 
\end{bmatrix} 
= \left[W 
\begin{bmatrix}
	z \\ \sigma
\end{bmatrix}  + \beta \right]^{+}_{\eta}   \ ,
\eeq 
with a constant input $\beta \in \mathbb{R}^{3\vert \mathcal{N}\vert + 3\vert \mathcal{E}\vert +1}$ given by 
\beq
\beta:=
\begin{bmatrix}
    0 & 0 & \mathbf 1^T d  &  -d^T &  \bar c^T \diag (c^{-1}) & -d^T H - F^T 
\end{bmatrix}^T  .
\eeq

We focus on the trajectories $(z(t),\sigma(t))$ that start with initial points in $\mathbb{I}$. Note that we still have the equilibrium set and the largest invariant set denoted as $\mathbb{E}$ and $\mathbb{S}$, respectively, from \eqref{eq:equilibrium_set}, \eqref{eq:largest_invariant_set}.
Given the condition $\rho\in \left(0  ,  \inf_{j\in\mathcal{N}} 4 c_j\right) $, the whole proof still follows the same three steps in the proof of Theorem~\ref{teo:stability} in Appendix~\ref{apx:proof_of_teo_stability}.
We now show each individual step.

% \begin{itemize}
% 	\item \textbf{Step 1}: Each trajectory $(z(t),\sigma(t))$ converges to the largest invariant set $\mathbb{S}$.
% 	\item \textbf{Step 2}: Any trajectory $(z(t),\sigma(t))$ in the largest invariant set $\mathbb{S}$ is an equilibrium of the closed-loop system \eqref{eq:swingdynamics}, \eqref{eq:reg_pricectr}, i.e., $\mathbb{S}\subset \mathbb{E}$.
% 	\item \textbf{Step 3}: Each trajectory $(z(t),\sigma(t))$ literally converges to a single equilibrium point of the closed-loop system \eqref{eq:swingdynamics}, \eqref{eq:reg_pricectr}.
% \end{itemize}

\noindent
\textbf{Step 1}: 

We still adopt the standard quadratic Lyapunov function \eqref{eq:quadratic_Lyap}, but now
% \beq 
% V(z,\sigma)
% := \  \frac{1}{2}
% \begin{bmatrix}
% 	z-z^* \\ \sigma-\sigma^* 
% \end{bmatrix}^T
% \begin{bmatrix}
% 	T^z & \\
% 	& T^\sigma   
% \end{bmatrix}
% \begin{bmatrix}
% 	z-z^* \\ \sigma-\sigma^* 
% \end{bmatrix} \ ,
% \eeq
%where we let $T^z:=\diag(T^{\lambda}, M, T^{\alpha}, T^{\beta},T^{\eta})$ and $T^{\sigma}:= \diag(T^{\hat p},T^{\hat d},B)$ be block diagonalized matrices. 
$(z^*,\sigma^*)$ is one arbitrary equilibrium of the closed-loop system \eqref{eq:swingdynamics}, \eqref{eq:gradient_play_gen_price_bidding_alternative}, \eqref{eq:regularized_market_graident_play}.
We can acquire the time derivative of $V(z,\sigma)$ along the trajectory of $(z(t),\sigma(t))$ as
\bseq
\begin{small}
\begin{align}\nonumber
&\dot V(z,\sigma) \\
\label{eq:dotVPB.a}
=  \ & 
\begin{bmatrix}
z-z^* \\ \sigma-\sigma^* 
\end{bmatrix}^T
\begin{bmatrix}
T^z \dot z \\ T^{\sigma}\dot \sigma
\end{bmatrix} \\
\label{eq:dotVPB.b}
= \ & \begin{bmatrix}
z-z^* \\ \sigma-\sigma^* 
\end{bmatrix}^T
\left[W 
\begin{bmatrix}
z \\ \sigma
\end{bmatrix} + \beta \right]^{+}_{\eta} \\
\label{eq:dotVPB.c}
\le \ & \begin{bmatrix}
z-z^* \\ \sigma-\sigma^* 
\end{bmatrix}^T
\left( W 
\begin{bmatrix}
z \\ \sigma
\end{bmatrix}+ \beta \right)	\\
\label{eq:dotVPB.d}
= \ & \begin{bmatrix}
z-z^* \\ \sigma-\sigma^* 
\end{bmatrix}^T
W 
\begin{bmatrix}
z-z^* \\ \sigma-\sigma^*
\end{bmatrix}  - (\eta- \eta^*)^T \psi^* \\
\label{eq:dotVPB.e}
\le \ & \begin{bmatrix}
z-z^* \\ \sigma-\sigma^* 
\end{bmatrix}^T
W 
\begin{bmatrix}
z-z^* \\ \sigma-\sigma^*
\end{bmatrix}  \\
\label{eq:dotVPB.f}
= \ & - (\sigma-\sigma^*)^T W_{\sigma} (\sigma-\sigma^*) \ ,
\end{align}%
\end{small}%
\eseq%
where $\psi^* \in \mathbb{R}_{\ge0}^{2|\mathcal{E}|}$ denotes a complementary vector variable and $W_\sigma \in\mathbb{R}^{(2|\mathcal{N}|+ 2|\mathcal{E}|+1)\times(2|\mathcal{N}|+ 2|\mathcal{E}|+1)}$, with its explicit expression in \eqref{eq:Wz}, is a symmetric matrix re-arranged from the submatrix of $-W$ that contains all the rows and columns with regard to $\sigma$. Note that \eqref{eq:dotVPB.d} follows from the equilibrium condition
\beq
W
\begin{bmatrix}
z^* \\ \sigma^*
\end{bmatrix}
  + \beta +
\begin{bmatrix}
 0^{3|\mathcal{N}|+|\mathcal{E}|+ 1}  \\ \psi^* 
\end{bmatrix} 
=0
\eeq
along with the complementarity condition
\beq
0\le \eta^* \perp \psi^* \ge 0 \ .
\eeq
\eqref{eq:dotVPB.e} results from the fact $(\eta-\eta^*)^T\psi^* = \eta^T \psi^* - {\eta^*}^T \psi^* = \eta^T \psi^* \ge 0$. \eqref{eq:dotVPB.f} is attained with all the terms regarding $z-z^*$ canceled out due to the specific structure of $W$.

\begin{table*}
%\begin{small}
\beq\label{eq:Wz}
W_\sigma :=  \begin{bmatrix}
	|\mathcal{N}|\rho^{-1} & - \rho^{-1}  \cdot \mathbf{1}^T &   \frac{1}{2}\cdot{c^{-1}}^T  - \rho^{-1}\cdot  \mathbf{1}^T  & - \rho^{-1} \cdot \mathbf 1^T H  \\
	- \rho^{-1}\cdot   \mathbf{1} & \rho^{-1}\cdot I + D   & \rho^{-1}\cdot  I - \frac{1}{2}\cdot \diag(c^{-1}) &    \rho^{-1}\cdot  H  \\
	\frac{1}{2}\cdot {c^{-1}}  - \rho^{-1} \cdot \mathbf{1}  &  \rho^{-1}\cdot  I - \frac{1}{2}\cdot \diag(c^{-1})     &   \rho^{-1}\cdot  I &   \left(  \rho^{-1} \cdot  I - \frac{1}{2}\cdot \diag(c^{-1}) \right) H  \\
% 	\frac{1}{2}{a^{-1}}  - \rho^{-1} \cdot \mathbf{1}  &  \rho^{-1}\cdot  I - \frac{1}{2}\diag(a^{-1})     &  0 &  \rho^{-1} \cdot I     & \left(  \rho^{-1} \cdot I - \frac{1}{2}\diag(a^{-1}) \right) H   
% 	\\
	-\rho^{-1} \cdot  H^T  \mathbf 1  & \rho^{-1} \cdot  H^T  & H^T \left(  \rho^{-1}\cdot  I - \frac{1}{2}\cdot \diag(c^{-1}) \right)   & \rho^{-1}\cdot  H^T H
\end{bmatrix}
\eeq
%\end{small}%
\end{table*}

We next claim the positive semidefiniteness of $W_\sigma$. Let
\beq
R:= I- \frac{\rho}{2} \cdot \diag(c^{-1})
\eeq 
be a diagonal matrix and we can re-express $W_\sigma$ as 
\begin{small}
\beq\label{eq:innerWz}
W_\sigma = \rho^{-1} Q^T
\underbrace{
\begin{bmatrix}
	I & - I  & -R& -I \\
- I & I +\rho D & R& I   \\
-R & R  & I & R \\
%-R_a & R_a & 0& I & R_a \\
-I & I & R& I 
\end{bmatrix}}_{=: W_\sigma^{\rm in}}
Q
\eeq
\end{small}%
with
\beq
Q:=\blockdiag(\mathbf{1},I,I,H) \in \mathbb{R}^{4|\mathcal{N}|\times(2|\mathcal{N}|+ 2|\mathcal{E}|+1)} \ .
\eeq
The inner matrix $W_\sigma^{\rm in}\in \mathbb{R}^{4|\mathcal{N}|\times 4|\mathcal{N}|}$ of \eqref{eq:innerWz} can be further block diagonalized by the following bijective linear transformation $P\in \mathbb{R}^{4|\mathcal{N}|\times 4|\mathcal{N}|}$:
\begin{small}
\beq\label{eq:diagWz}
W_\sigma \\
= \rho^{-1} Q^T P^T
\begin{bmatrix}
	I & 0 & 0 &  0 \\
	0 & \rho D& 0 &  0\\
	0 & 0 &  I-R^2 &  0 \\
%	0 & 0 & 0 &  S& 0 \\
	0 & 0 & 0 &  0
\end{bmatrix}
PQ
\eeq
\end{small}%
with
% \beq
% S:=  I-\frac{1}{2}R_a^2  - \frac{1}{4}R_aR_c(I-\frac{1}{2}R_c^2)^{-1}R_cR_a \in \mathbb{R}^{|\mathcal{N}|\times |\mathcal{N}|},
% \eeq
% \beq
% P_1:= 
% \begin{bmatrix}
% 	I & 0 & 0 & 0& 0\\ 
% 	0 & I & 0 & 0& 0\\ 	
% 0 & 0 &	I 	& 	\frac{1}{2}R_aR_c(I-\frac{1}{2}R_c^2)^{-1} & 0\\
% %0& 0 & 0	&  I & 0\\
% 	0 & 0 & 0 & 0& I\\ 
% \end{bmatrix}^{-1} ,
% \eeq
\begin{small}
\beq
P : =
\begin{bmatrix}
	I&I& R &I \\
	0& I & 0 &0\\
    0& 0 & I &0\\
%0 & 0 & 0& I & 0\\
	0 & 0 & 0 & I
\end{bmatrix}^{-1} \ .
\eeq
\end{small}%
Note that $I-R^2$ is still diagonal and it is easy to verify that any arbitrary $\rho\in \left(0  ,  \inf_{j\in\mathcal{N}}   4 c_j \right) $ guarantees its positive definiteness.
$W_\sigma \succeq 0 $ then follows from $I, \rho D ,I- R^2\succ 0$.

Therefore, following \eqref{eq:dotVPB.f}, we arrive at 
\beq
\dot V(z,\sigma) \le - (\sigma-\sigma^*)^T W_\sigma (\sigma-\sigma^*) \le 0 \ .
\eeq
Since $V(z,\sigma)$ is radially unbounded, $\dot V(z,\sigma) \le 0$ indicates that all the trajectories $(z(t),\sigma(t))$ remain bounded. 
It then follows from the invariance principle for Caratheodory systems that $(z(t),\sigma(t))$ converges to the largest invariant set $\mathbb{S}$.

\noindent
\textbf{Step 2}:

For an arbitrary trajectory $(z(t),\sigma(t)) \in \mathbb{S}$, $\dot V(z,\sigma) \equiv 0$ basically enforces $(\sigma-\sigma^*)^TW_\sigma(\sigma-\sigma^*) \equiv 0$. In light of the structure of $W_\sigma$ in \eqref{eq:innerWz}, its semidefiniteness results from the nontrivial null space of $W_\sigma^{\rm in}$ and $Q$.
As a result, we can characterize the largest invariant set $\mathbb{S}$ as the union of the following three sets:
\begin{small}
\beq\nonumber
\mathbb{S}_1:=\Big\{(z,\sigma) \  | \   \sigma(t) \equiv \sigma^* \Big\} 
\eeq
\beq\nonumber
\begin{aligned}
 \mathbb{S}_2:=\Big\{(z,\sigma) \  | \  &  H (\eta(t)-\eta^*)\equiv0 , \  \lambda(t) \equiv \lambda^*,  \\   & \omega(t)\equiv \omega^*, \ \alpha(t)\equiv \alpha^*   \Big\} 
\end{aligned}
\eeq
\beq\nonumber
\begin{aligned}
\mathbb{S}_3:=\Big\{(z,\sigma) \  | \ &  (\lambda(t)-\lambda^*)\cdot\mathbf{1} \equiv H (\eta(t)-\eta^*) ,   \\ &  \omega(t)\equiv \omega^*, \ \alpha(t)\equiv \alpha^*   \Big\} 
\end{aligned}
\eeq\end{small}%

We next claim $\mathbb{S}_2 \equiv \mathbb{S}_3$ by first showing $\mathbf{1}^T H =0$. 
Recall $H = [(BC^TL^{\dagger})^T, - (BC^TL^{\dagger})^T]$ and $L= CBC^T$. By definition of the Moore-Penrose inverse, it is straightforward to have $L^{\dagger}\mathbf{1} =0$ due to
\begin{small}
\beq
\begin{aligned}
	L^{\dagger} \mathbf{1} =&\ (CBC^T)^{\dagger}(CBC^T)(CBC^T)^{\dagger} \mathbf{1} \\
=&\ (CBC^T)^{\dagger}((CBC^T)(CBC^T)^{\dagger})^T \mathbf{1}\\
=& \ (CBC^T)^{\dagger}{(CBC^T)^{\dagger}}^T CB \underbrace{C^T\mathbf{1} }_{=0} \ .
\end{aligned}
\eeq\end{small}%
%due to $C^T\mathbf{1}=0$. 
$\mathbf{1}^T H =0$ then follows immediately.

In $\mathbb{S}_3$, $(\lambda(t)-\lambda^*)\cdot\mathbf{1} \equiv H (\eta(t)-\eta^*)$ leads to 
\beq
\underbrace{\mathbf{1}^T \cdot (\lambda(t)-\lambda^*)\cdot\mathbf{1} }_{ =|\mathcal{N}|(\lambda(t)-\lambda^*)}\equiv \underbrace{\mathbf{1}^T H (\eta(t)-\eta^*) }_{ = 0} \ ,
\eeq
which enforces $\lambda(t)\equiv\lambda^*$ and $ H (\eta(t)-\eta^*)\equiv0$, i.e., $\mathbb{S}_3 \subset \mathbb{S}_2$.
Obviously by definition we have $\mathbb{S}_2 \subset \mathbb{S}_3$.
Therefore, $\mathbb{S}_2$ and $\mathbb{S}_3$ are equivalent.

Note that in any of the three sets, we have $\lambda(t) \equiv \lambda^*$,  $\omega(t)\equiv \omega^*$, $\alpha(t)\equiv \alpha^*$ and $H\eta(t)\equiv H\eta^*$, which suffice to guarantee
$\dot{\hat q}, \dot{\tilde{\theta}} \equiv 0 $ immediately, or $\dot z \equiv 0$; recall \eqref{eq:swingdynamics:1}, \eqref{eq:regularized_market_graident_play.b}. 
Then it suggests that the inner term of the projection $[\cdot]^+_{\eta}$ in \eqref{eq:regularized_market_graident_play.d} is a constant vector. For its $e^{\rm th}$ entry, it could be (a) positive; (b) zero; (c) negative. In case (a), $\dot \eta_e > 0$ drives $\eta_e(t)$ to infinity, which contradicts the fact that all the trajectories in the largest invariant set $\mathbb{S}$ are bounded. Case (b) directly implies $\dot \eta_e \equiv 0$. 
In case (c), $\eta_e(t)$ will be driven to stay at 0 that also suggests $\dot \eta_e \equiv 0$.
As a result, $\dot \eta \equiv 0$ holds.\footnote{There might exist a trivial degenerate subspace in case (c) where $H(\eta(t)-\eta^*) \equiv 0$ holds with $\dot \eta \not\equiv 0$. However, in this subspace, if any, the system is still in transient and will eventually converge to $\eta(t)\equiv \eta^*$. Therefore, we exclude it from the characterization of $\mathbb{S}$ for conciseness.  }
%meaning $\eta(t) \equiv \eta^*$.
So far, $\dot \sigma \equiv 0$ has also been guaranteed. 
Therefore, any trajectory $(z(t),\sigma(t))$ in the largest invariant set $\mathbb{S}$ is an equilibrium of the closed-loop system \eqref{eq:swingdynamics}, \eqref{eq:gradient_play_gen_price_bidding_alternative}, \eqref{eq:regularized_market_graident_play}, i.e., $\mathbb{S}\subset \mathbb{E}$.

\noindent
\textbf{Step 3}:

We now prove any trajectory $(z(t),\sigma(t))$ indeed converges to one single equilibrium for completeness, despite similar procedures to Step 3 in the proof of Theorem \ref{teo:stability}.
We observe that along any trajectory $(z(t),\sigma(t))$, $V(z,\sigma)$ is non-increasing in $t$. Since $V(z,\sigma)$ is lower bounded given its quadratic form, there exists an infinite sequence of time epochs $\{t_k,k=1,2,\dots\}$ such that with $k \rightarrow \infty$, we have $\dot V(z(t_k),\sigma(t_k)) \rightarrow 0$, i.e., $(z(t_k),\sigma(t_k)) \rightarrow (\hat z^*,\hat \sigma^*)  \in \mathbb{S} \subset \mathbb{E}$. We use this specific $(\hat z^*,\hat \sigma^*) $ as the equilibrium point in the definition of $V(z,\sigma)$, which implies $V(z(t),\sigma(t)) \rightarrow V(\hat z^*,\hat \sigma^*) =0$. Due to the continuity of $V(z,\sigma)$, $(z(t),\sigma(t)) \rightarrow (\hat z^*, \hat \sigma^*)$ is enforced, which suggests that $(z(t),\sigma(t)) $ literally converges to one single equilibrium point in $\mathbb{S}\subset \mathbb{E}$.

% \noindent
% \textbf{Step 4}:

% Last but not least, once $(z(t),\sigma(t))$ has converged to an equilibrium point $(z^*,\sigma^*)$ of the subsystem \eqref{eq:closed-loop_whole:b}, \eqref{eq:closed-loop_whole:a} boils down to 
% \begin{align}
% T^p \dot p \ = \      -  \rho  \cdot  (p-\hat p^*  ) \ ,
% \end{align}
% which suggests that $p(t)$ always tracks and converges to $\hat p^*$, or equivalently $p^*$.
% As a result, any trajectory $(z(t),\sigma(t),p(t))$ that starts from an initial point in $\mathbb{I}_{PB}$ indeed converges to a single equilibrium point of the closed-loop system \eqref{eq:closed-loop_whole}.

\bibliographystyle{IEEEtran}
\bibliography{IEEEabrv,mybibfile}

\begin{IEEEbiography}
	[{\includegraphics[width=1in,height=1.25in,clip,keepaspectratio]{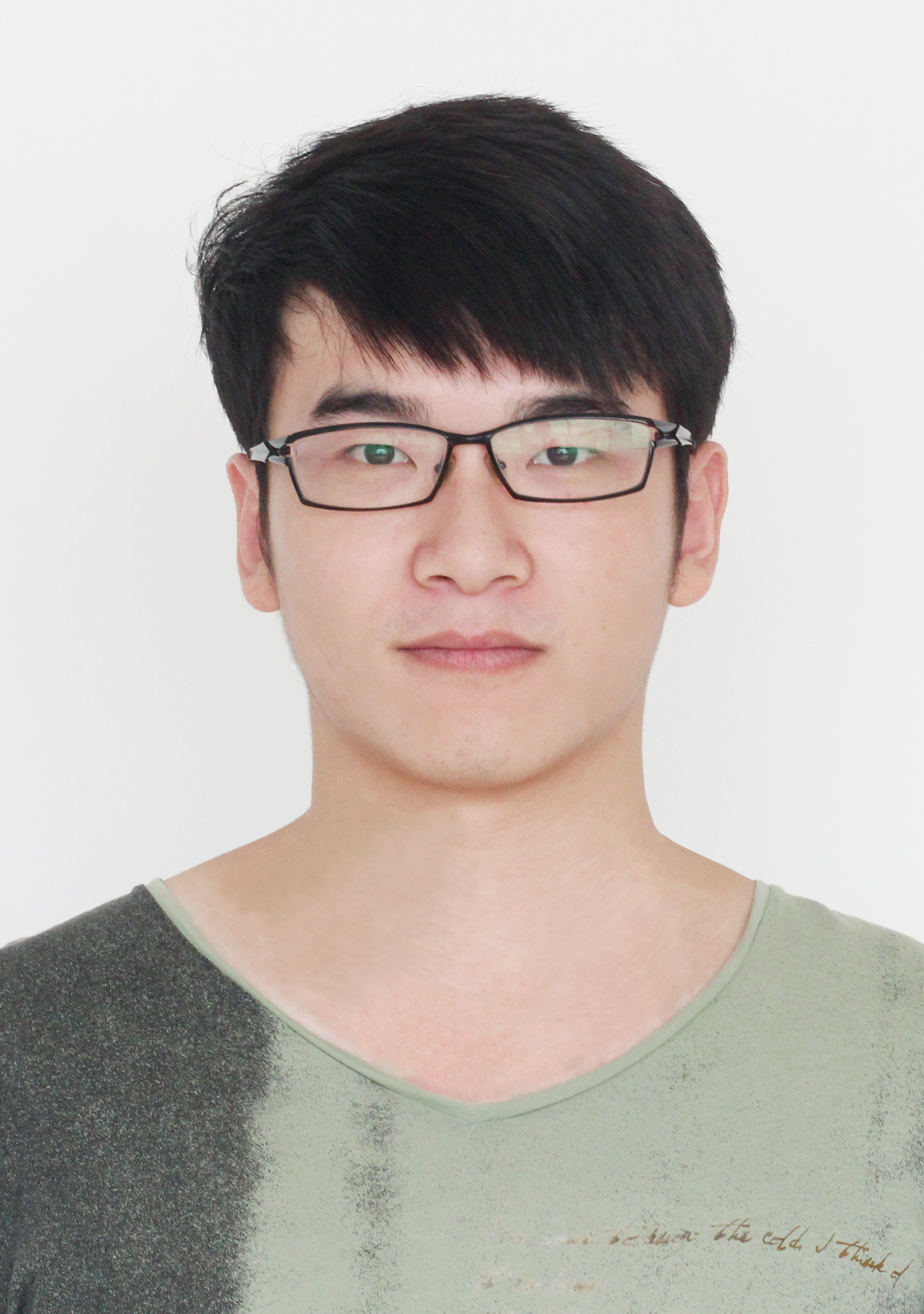}}]{Pengcheng You}
	(S'14-M’18) earned his B.S. in electrical engineering and Ph.D. in control, both from Zhejiang University, China, in 2013 and 2018, respectively. Currently he is a Postdoctoral Fellow with the Department of Electrical and Computer Engineering at Johns Hopkins University. His research focuses on power systems and electricity markets.
\end{IEEEbiography}

\begin{IEEEbiography}
	[{\includegraphics[width=1in,height=1.25in,clip,keepaspectratio]{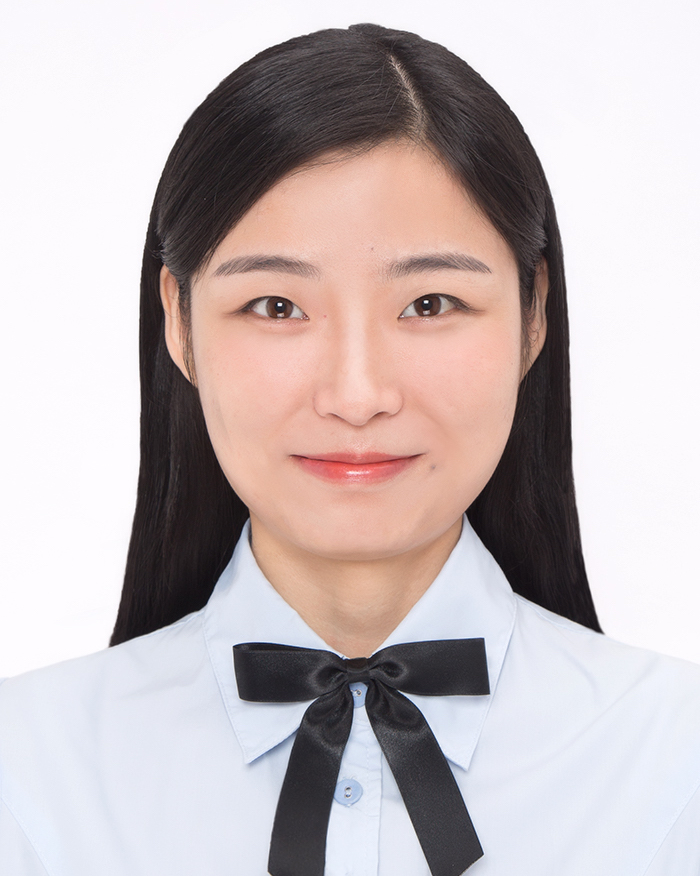}}]{Yan Jiang} received the Ph.D. degree at the Department of Electrical and Computer Engineering and the M.S.E. degree at the Department of Applied Mathematics and Statistics, Johns Hopkins University. She received the B.Eng. degree in electrical engineering and automation from Harbin Institute of Technology in 2013, and the M.S. degree in electrical engineering from Huazhong University of Science and Technology in 2016. Her research interests lie in the area of control of power systems.
\end{IEEEbiography}

\begin{IEEEbiography}
	[{\includegraphics[width=1in,height=1.25in,clip,keepaspectratio]{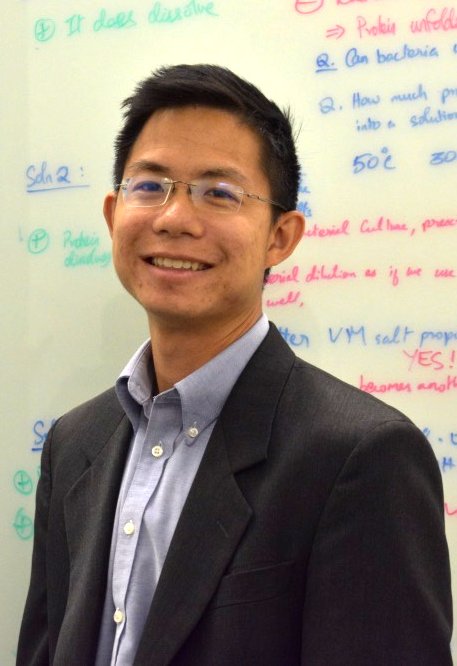}}]{Enoch Yeung} has a B.S. in Mathematics from Brigham Young University, \emph{magna cum laude} with university honors and a Ph.D. in Control and Dynamical Systems from the California Institute of Technology. He is an assistant professor in the Department of Mechanical Engineering at the University of California, Santa Barbara. He is a recipient of the Kanel Foundation Fellowship, the National Science Foundation Graduate Fellowship, the National Defense Science and Engineering Fellowship, the PNNL Project Team of the Year Award, the PNNL Outstanding Performance Award, the Keck Foundation Award, and an ARO Young Investigator Program Award. His research interests center on learning algorithms for dynamical systems, control theory, and control of biological networks.  
	%has a B.S. in Mathematics from Brigham Young University, \emph{magnua cum laude} with university honors and a Ph.D. in Control and Dynamical Systems from the California Institute of Technology. He has led many interdisciplinary research projects at the interface of synthetic biology and learning theory including the DARPA Synergistic Discovery \& Design Program (currently serving as a performer and PI), DARPA Friend or Foe program (serving as co-PI), DARPA Living Foundries program, the 2018 High Performance Data Analytics Program, the NSF Molecular Programming Project, and the AFOSR Biological Research Initiative. He is an assistant professor in the Department of Mechanical Engineering at the University of California Santa Barbara. Previously, he served as a senior research scientist in the Data Science and Analytics Group at Pacific Northwest National Laboratory and led several internal research efforts in deep learning, network inference, and control of complex systems. 
	%His research interests center on learning algorithms for dynamical systems, control theory, synthetic and systems biology. He has served on several advisory panels for DARPA, NIST, the DoD SBME initiative, and the National Defense University. He is the recipient of Kanel Foundation Fellowship, the National Science Foundation Graduate Fellowship, a National Defense Science and Engineering Fellowship, the PNNL Project Team of the Year Award, the PNNL Outstanding Performance Award, the Keck Foundation Award, and an ARO Young Investigator Program Award.
\end{IEEEbiography}

\begin{IEEEbiography}[{\includegraphics[width=1in,height=1.25in,clip,keepaspectratio]{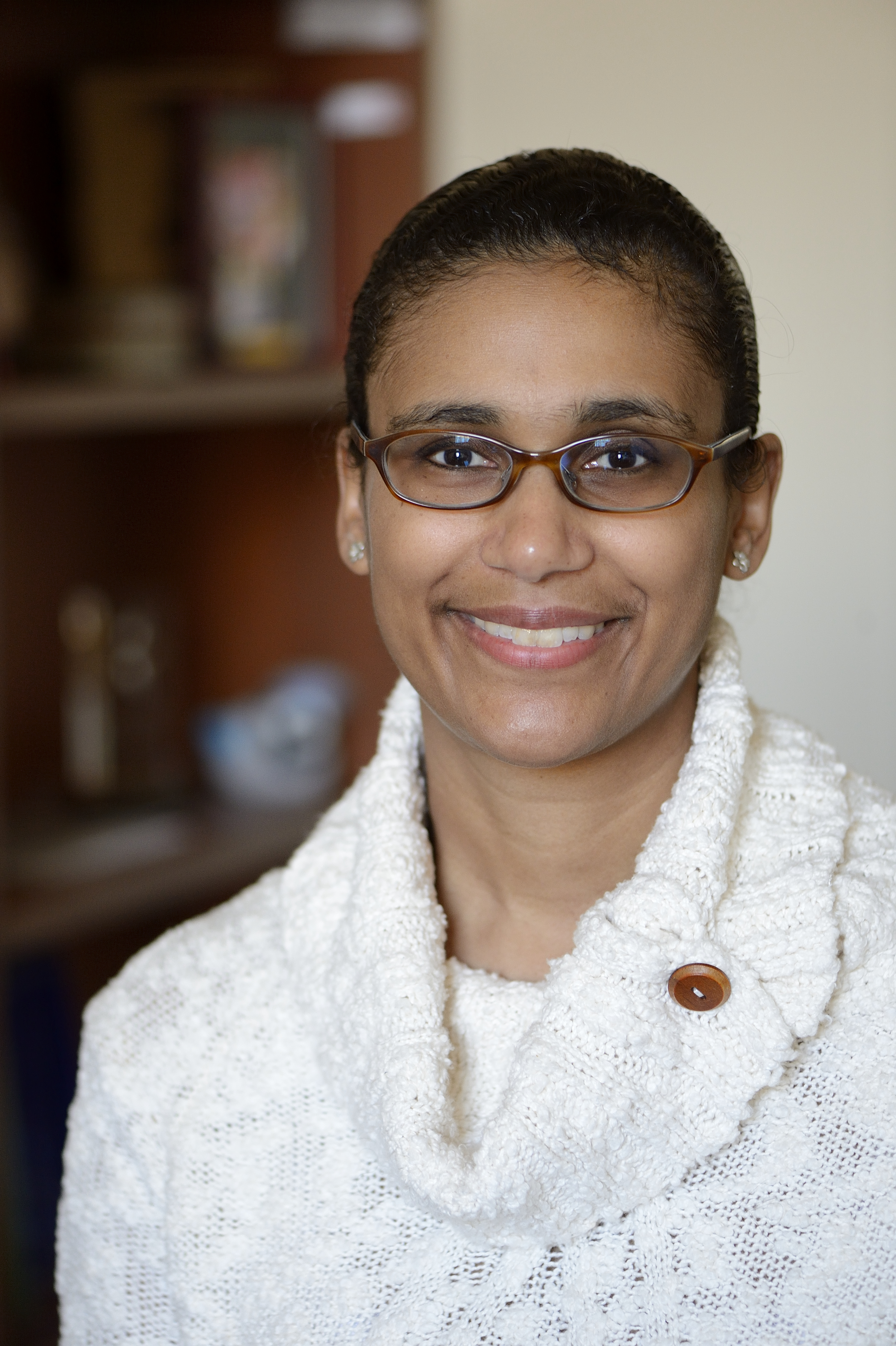}}]
{Dennice Gayme} (M'10-SM'14) an Associate Professor in Mechanical Engineering and the Carol Croft Linde Faculty Scholar at the Johns Hopkins University.  She earned her B. Eng. \& Society from McMaster University in 1997 and an M.S. from the University of California at Berkeley in 1998, both in Mechanical Engineering. She received her Ph.D. in Control and Dynamical Systems in 2010 from the California Institute of Technology.  She was a recipient of the JHU Catalyst Award in 2015, ONR Young Investigator and NSF CAREER awards in 2017, a JHU Discovery Award in 2019 and the 2020 Whiting School of Engineering Johns Hopkins Alumni Association Excellence in Teaching Award. Her research interests are in modeling, analysis and control for spatially distributed and large-scale networked systems in applications such as wall-bounded turbulent flows, wind farms, and power grids. 
\end{IEEEbiography}

\begin{IEEEbiography}[{\includegraphics[width=1in,height=1.25in,clip,keepaspectratio]{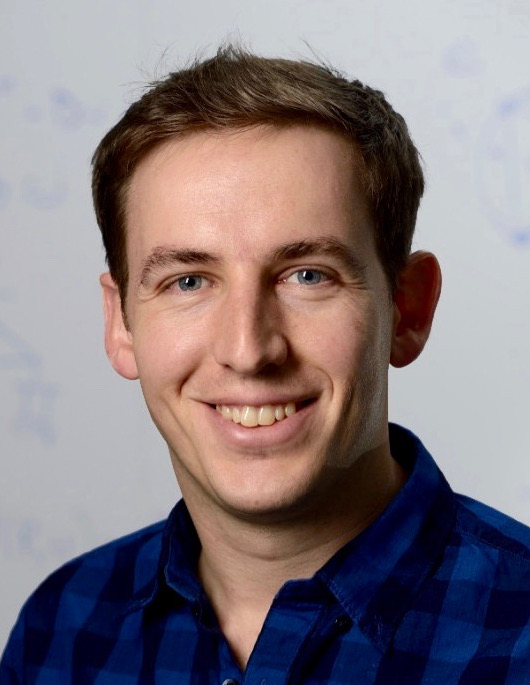}}]
{Enrique Mallada} (S'09-M'13-SM') is an Assistant Professor of Electrical and Computer Engineering at Johns Hopkins University. Prior to joining Hopkins in 2016, he was a Post-Doctoral Fellow in the Center for the Mathematics of Information at Caltech from 2014 to 2016. He received his Ingeniero en Telecomunicaciones degree from Universidad ORT, Uruguay, in 2005 and his Ph.D. degree in Electrical and Computer Engineering with a minor in Applied Mathematics from Cornell University in 2014. 
Dr. Mallada was awarded 
the NSF CAREER award in 2018,
the ECE Director's PhD Thesis Research Award for his dissertation in 2014, 
the Center for the Mathematics of Information (CMI) Fellowship from Caltech in 2014,
and the Cornell University Jacobs Fellowship in 2011. 
His research interests lie in the areas of control, dynamical systems and optimization, with applications to engineering networks such as power systems and the Internet.
\end{IEEEbiography}

\end{document}